\numberwithin{equation}{section}
\newcounter{thmcounter} 
\newcounter{lemmacounter}
\newtheorem{thm}[thmcounter]{Theorem}
\newtheorem{prop}[thmcounter]{Proposition}
\newtheorem{cor}[thmcounter]{Corollary}
\newtheorem{lemma}[lemmacounter]{Lemma}
\theoremstyle{definition}
\newtheorem{rem}[equation]{Remark}
\newtheorem{dfn}[equation]{Definition}
\newtheorem{notation}[equation]{Notation}
\newcommand{\catname}[1]{{\normalfont\textbf{#1}}}
\newcommand{\Ker}{\operatorname{Ker}}
\newcommand{\Cent}{\operatorname{Cent}}
\newcommand{\Img}{\operatorname{Im}}
\newcommand{\coker}{\operatorname{coker}}
\newcommand{\K}{\operatorname{\mathrm{K}}}
\newcommand{\GG}{\operatorname{\mathrm{G}}}
\newcommand{\St}{\operatorname{\mathrm{St}}}
\newcommand{\StU}{\operatorname{\hat{\mathrm{U}}}}
\newcommand{\G}{\operatorname{\mathrm{G}}}
\newcommand{\E}{\operatorname{\mathrm{E}}}
\newcommand{\M}{\operatorname{\mathrm{M}}}
\newcommand{\sr}{\operatorname{\mathrm{sr}}}
\newcommand{\Um}{\operatorname{\mathrm{Um}}}
\newcommand{\rk}{\operatorname{\mathrm{rk}}}
\newcommand{\rA}{\mathsf{A}}
\newcommand{\rC}{\mathsf{C}} 
\newcommand{\rD}{\mathsf{D}} 
\newcommand{\rE}{\mathsf{E}}
\newcommand{\eval}[4]{ev \textstyle \left[\frac{#2[#1] \rightarrow #3}{#1 \mapsto #4}\right]}
\newcommand{\ev}[3]{\eval{t}{#1}{#2}{#3}}
\newcommand\restr[2]{{\left.\kern-\nulldelimiterspace #1 \vphantom{\big|}\right|_{#2}}}
\def\ssub#1{\mathchoice {_{\lower2pt\hbox{$\scriptstyle #1$}}} {_{\lower2pt\hbox{$\scriptstyle #1$}}}
     {_{\lower1.5pt\hbox{$\scriptscriptstyle #1$}}} {_{\!\lower1.5pt\hbox{$\scriptscriptstyle #1$}}}}
\title{On centrality of $\K_2$ for Chevalley groups of type $\rE_\ell$}
\keywords {Chevalley groups, Steinberg groups, $K_2$-functor. {\em Mathematical Subject Classification (2010):} 19C09; 20G35}
\author{S. Sinchuk}
\address{Chebyshev laboratory, St. Petersburg State University, St. Petersburg, Russia}
\email{sinchukss {\it at} yandex.ru}
\date {\today}
\begin{document}

\begin{abstract} For a root system $\Phi$ of type $\rE_\ell$ and arbitrary commutative ring $R$ we show that the group $\K_2(\Phi, R)$ is contained in the center of 
the Steinberg group $\St(\Phi, R)$. 
In course of the proof we also demonstrate an analogue of Quillen---Suslin local-global principle for $\K_2(\Phi, R)$. \end{abstract}

\maketitle

\section {Introduction}\label{intro}
Let $\Phi$ be a reduced irreducible root system and $R$ be a commutative ring with 1. 
Denote by $\G(\Phi, R)$ the simply connected Chevalley group of type $\Phi$ over $R$ and by $\E(\Phi, R)$ its elementary subgroup, i.\,e. the subgroup generated
by elementary root unipotents $t_\alpha(\xi)$, $\alpha\in \Phi$, $\xi\in R$, see~\cite{VP, Ta, St78}.
Taddei's theorem asserts that $\E(\Phi, R)$ is a normal subgroup of $\G(\Phi, R)$ provided $\rk(\Phi)\geq 2$ (see~\cite{Ta}).

Let $\St(\Phi, R)$ be the Steinberg group of type $\Phi$ over $R$ (see Definition~\ref{SteinbergDef}).
Denote by $\phi$ the canonical map $\St(\Phi, R)\rightarrow \GG(\Phi, R)$ sending each formal generator $x_\alpha(\xi)$ to the elementary root unipotent $t_\alpha(\xi)$.
The image of $\phi$ equals $\E(\Phi, R)$. The unstable $\K$-groups of M.~Stein $\K_1(\Phi, R)$ and $\K_2(\Phi, R)$ are defined as the cokernel and the kernel of $\phi$ (see~\cite[\S~1A]{St78}):
\begin{equation}\label{basicExSeq}\xymatrix{1 \ar[r] & \K_2(\Phi, R) \ar[r] & \St(\Phi, R) \ar[r]_{\phi} & \GG(\Phi, R) \ar[r] & \K_1(\Phi, R) \ar[r] & 1.}\end{equation}

It is classically known that $\K_2(\Phi, R)$ is contained in the center of $\St(\Phi, R)$ if $R$ is a local ring and $ \Phi$ has rank $\geq 2$ (see~\cite[Theorem~2.13]{St73}).
One of the standard conjectures in the theory of Chevalley groups over rings asserts that $\St(\Phi, R)$ is a {\it central} extension of $\E(\Phi, R)$ for \textit{any} commutative ring $R$ provided that the rank of $\Phi$ is sufficiently large.
We refer to this conjecture as {\it centrality of $\K_2$}. It can be regarded as a ``$\K_2$-analogue'' of Taddei's normality theorem.
For Chevalley groups of rank $2$ centrality of $\K_2$ fails already for $1$-dimensional $R$ (see~\cite[Theorem~1]{W}).

W.~van der Kallen in~\cite{Ka} and recently A.~Lavrenov in~\cite{La} proved centrality of $\K_2$ for $\Phi=\rA_\ell,\rC_\ell$, $\ell\geq 3$.
The main technical ingredient of both proofs is the so-called method of ``another presentation'' which consists in presenting $\St(\Phi, R)$
as a group with a set of generators modeling {\it elements of root type} (see~\cite[\S~4]{V00} for the idea of the definition).
For example, in the linear case the elements of root type are exactly the usual linear transvections, see Section~\ref{vdkP}.
The key advantage of this method lies in the fact that it allows one to define an action of $\G(\Phi, R)$ on $\St(\Phi, R)$ which turns the canonical map $\phi$ into a {\it crossed module}.

Recall that a {\it crossed module} is a triple $C=(N, M, \mu)$ consisting of an abstract group $N$ acting on itself by conjugation, an $N$-group $M$ 
and a group morphism $\mu\colon M \rightarrow N$ which preserves the action of $N$ and satisfies Peiffer identity $\mu (m) \cdot m' = m m' m^{-1}$, $m,m'\in M$.
The cokernel and the kernel of $\mu$ are usually denoted as $\pi_1(C)$ and $\pi_2(C)$, respectively.
It is not hard to show that the image of $\mu$ is a normal subgroup of $N$ (in particular, $\pi_1(C)$ is a group) and $\pi_2(C)$ is a central subgroup of $M$.
Moreover, the action of $N$ on $M$ induces an action of $\pi_1(C)$ on $\pi_2(C)$.
Thus, having constructed a crossed module $(\G(\Phi, R), \St(\Phi, R), \phi)$ (as was done e.\,g. in~\cite{Ka} for $\Phi=\rA_\ell$, $\ell\geq 3$)
we simultaneously obtain both centrality of $\K_2(\Phi, R)$ and normality theorem for $\E(\Phi, R)$.

The main disadvantage of the method of ``another presentation'' is its bulkiness and technical complexity. 
Moreover, this method requires separate analysis for each type of $\Phi$.
This is caused by the fact that the description of elements of root type depends on the ``global geometry'' of $\G(\Phi, R)$ (e.\,g. the structure of the minimal representation of $\G(\Phi, R)$).
Such description in the case $\Phi=\rE_\ell$, $\ell=6,7$ is particularly complicated already for fields and the case $\ell=8$ is even harder (see e.\,g.~\cite[\S~4]{V00} and further references therein).

The main goal of the present paper is to obtain a short unified proof of centrality of $\K_2$ for $\Phi=\rE_\ell$, $\ell=6,7,8$ which
involves only calculations with elementary root unipotents and bypasses the description of the set of relations between elements of root type.
More precisely, our main result is the following theorem.
\begin{thm}\label{centrality} Let $\Phi = \rE_\ell$, $\ell=6,7,8$. 
Then $\K_2(\Phi, R)$ is contained in the center of the Steinberg group $\St(\Phi, R)$. \end{thm}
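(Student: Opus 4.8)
The plan is to derive the theorem from Stein's local centrality result by establishing a Quillen--Suslin type local--global principle for the functor $\K_2(\Phi,-)$ and then invoking it over the polynomial extension $R[t]$. I would first reduce centrality to the vanishing of individual commutators: since $\K_2(\Phi,R)=\Ker\phi$ is normal in $\St(\Phi,R)$, it lies in the center exactly when $[z,x_\alpha(\xi)]=1$ for all $z\in\K_2(\Phi,R)$, $\alpha\in\Phi$ and $\xi\in R$. To rigidify such a commutator I pass to $R[t]$ and set
\[
c(t)=\bigl[z,x_\alpha(t\xi)\bigr]=z\,x_\alpha(t\xi)\,z^{-1}\,x_\alpha(-t\xi)\in\St(\Phi,R[t]).
\]
Applying $\phi$ shows $c(t)\in\K_2(\Phi,R[t])$, while $c(0)=1$ and $c(1)=[z,x_\alpha(\xi)]$; thus it suffices to prove that $c(t)=1$.

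Next I would verify that $c(t)$ dies after localizing at every maximal ideal $\mathfrak m\subset R$. Over the \emph{local} ring $R_{\mathfrak m}$ the image $z_{\mathfrak m}$ lies in $\K_2(\Phi,R_{\mathfrak m})$, which by Stein's theorem is central and, by the symbol presentation of $\K_2$ over local rings (Matsumoto, Stein), is a product of Steinberg symbols $\{u,v\}_\beta=h_\beta(uv)\,h_\beta(u)^{-1}\,h_\beta(v)^{-1}$ with $u,v\in R_{\mathfrak m}^{\ast}$. The decisive observation is that each such symbol is central in $\St(\Phi,S)$ for \emph{any} coefficient ring $S$: from the relation $h_\beta(w)\,x_\gamma(c)\,h_\beta(w)^{-1}=x_\gamma\bigl(w^{\langle\gamma,\beta^{\vee}\rangle}c\bigr)$ one computes that $\{u,v\}_\beta$ conjugates $x_\gamma(c)$ by the scalar $u^{k}v^{k}(uv)^{-k}=1$, where $k=\langle\gamma,\beta^{\vee}\rangle$. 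Consequently $z_{\mathfrak m}$ commutes with every generator $x_\gamma(f)$, $f\in R_{\mathfrak m}[t]$, so that $c_{\mathfrak m}(t)=1$ in $\St(\Phi,R_{\mathfrak m}[t])$.

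The core of the argument is the local--global principle itself: if $a(t)\in\K_2(\Phi,R[t])$ satisfies $a(0)=1$ and $a_{\mathfrak m}(t)=1$ for every maximal ideal $\mathfrak m$, then $a(t)=1$. I would establish it along Suslin's lines. A Noetherian reduction lets one assume $R$ finitely generated over $\mathbb Z$, so that the finitely many Steinberg relations witnessing $a_{\mathfrak m}(t)=1$ already hold over a principal localization $R_{s}$ with $s\notin\mathfrak m$; letting $\mathfrak m$ vary produces $s_1,\dots,s_n$ generating the unit ideal with $a_{s_i}(t)=1$. The passage from this finite cover to $a(t)=1$ rests on a dilation lemma: triviality of $a(t)$ over $R_{s}[t]$ should force, after the substitution $t\mapsto s^{N}t$ for $N\gg0$, an honest identity $a(s^{N}t)=1$ over $R[t]$, and the resulting local identities are then patched by means of a partition of unity $\sum b_i s_i^{N}=1$. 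Proving the dilation lemma is the main obstacle, for it is precisely where the combinatorics of $\Phi=\rE_\ell$ enters: one must rewrite products of elementary root unipotents using the Chevalley commutator formula and absorb the denominators introduced by the localization entirely through these relations. The simply-laced structure of $\rE_\ell$, in which all structure constants equal $\pm1$, is what should permit a single type-independent computation to cover $\ell=6,7,8$ at once. Granting the principle, applying it to $a(t)=c(t)$ gives $c(t)=1$, whence $[z,x_\alpha(\xi)]=c(1)=1$; as $z,\alpha,\xi$ were arbitrary, $\K_2(\Phi,R)$ is central in $\St(\Phi,R)$.
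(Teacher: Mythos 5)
Your skeleton---rigidify the commutator into a $t$-parametric element of $\K_2(\Phi,R[t])$ vanishing at $t=0$, kill it after localization at every maximal ideal, and conclude by a Quillen--Suslin principle---is exactly the paper's strategy (the paper's version of your $c(t)$ is $h=y_\beta(t)^g\,y_\beta(-t)\in\St(\Phi,R[t],tR[t])$). However, both substantive steps have genuine gaps. The local step fails as written: you deduce $c_{\mathfrak m}(t)=1$ from the claim that every element of $\K_2(\Phi,R_{\mathfrak m})$ is a product of Steinberg symbols $\{u,v\}_\beta$, citing Matsumoto and Stein. Matsumoto's theorem concerns fields, and the known symbol-generation results for local rings (van der Kallen's theory of rings with many units, Stein's semilocal results) require the residue field to be sufficiently large; when $R/\mathfrak m$ is $\mathbb F_2$ or $\mathbb F_3$ no such theorem is available---in general one only has generation by rank-one (Dennis--Stein type) elements, which are not visibly central over an arbitrary base ring. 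Note also that you cannot fall back on Stein's centrality theorem itself, since your commutator lives in $\St(\Phi,R_{\mathfrak m}[t])$ and $R_{\mathfrak m}[t]$ is not local; that is the whole difficulty. The paper's local argument avoids symbols entirely: Stein's \emph{surjective stability}, valid for every local ring, shows that $\K_2(\Phi,R_M)$ is generated by the image of $\K_2(\langle\{\alpha_1\}\rangle,R_M)$ for a suitable simple root $\alpha_1$ (Lemma~\ref{stability}); choosing a simple root $\alpha\neq\alpha_1$, this image lies in $\Img\left(\K_2(\Delta_\alpha,R_M)\to\St(\Phi,R_M[t])\right)$, and any such element centralizes $U^{\pm}_{\alpha}(R_M[t])$ because it normalizes these subgroups while $\phi$ restricts to an isomorphism on them; since $U^{+}_{\alpha}$ and $U^{-}_{\alpha}$ together generate the Steinberg group, local triviality follows with no assumption on residue fields.

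The second gap is the local-global principle itself, which you rightly call the main obstacle but then only gesture at. Triviality of $a_s(t)$ in $\St(\Phi,R_s[t])$ is witnessed by an uncontrolled word-level computation, and elements of a Steinberg group are not matrices, so there is no general procedure for ``absorbing the denominators'' of such a witness through the Chevalley commutator formula; simple-lacedness of $\rE_\ell$ (structure constants $\pm1$) does not produce one. This difficulty is exactly why van der Kallen and Tulenbaev needed ``another presentation'' of the linear Steinberg group by transvection-like generators in order to construct the dilation map $T$ (Lemma~\ref{Tmap}), and no analogue of that presentation is known for $\rE_\ell$. The paper's technical core exists precisely to bypass this: it proves that the relative group $\St(\rE_\ell,R,I)$ is an amalgam (colimit) of its relative subgroups of type $\rA_4$ (Theorem~\ref{relPres}, Corollary~\ref{relPresE}), glues Tulenbaev's maps $T_\Psi$, $\Psi\in A_4(\Phi)$, along this amalgam into a single map $T_\Phi$ (Lemma~\ref{glue_t}), and only then derives the dilation principle (Lemma~\ref{QScor}), the two-element patching (Lemma~\ref{T23}), and Theorem~\ref{QSlemma}. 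Without an argument of comparable substance at this point, your proposal assumes rather than proves the paper's key theorem.
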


The key ingredient in our proof of Theorem~\ref{centrality} is the following local-global principle.
\begin{thm}\label{QSlemma} Let $\Phi = \rE_\ell$, $\ell=6,7,8$.
An element $\alpha\in\St(\Phi, R[t], tR[t])$ is trivial if and only if its image in $\St(\Phi, R_M[t])$ is trivial for every maximal ideal $M\trianglelefteq R$. \end{thm}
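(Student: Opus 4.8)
The plan is to adapt the Quillen--Suslin ``dilation and patching'' technique to the relative Steinberg group. The ``only if'' direction is immediate by functoriality of localization. For the converse, the basic tool is the family of endomorphisms $\mu_a\colon \St(\Phi, R[t], tR[t]) \to \St(\Phi, R[t], tR[t])$ induced by the substitution $t \mapsto a t$, $a \in R$. These satisfy $\mu_1 = \mathrm{id}$ and $\mu_a\mu_b = \mu_{ab}$, and since the augmentation $t\mapsto 0$ kills $\St(\Phi, R[t], tR[t])$, the endomorphism $\mu_0$ sends the whole relative group to $1$. I will also use that $\St(\Phi, -)$ commutes with filtered colimits and with localization, which is immediate from its presentation by generators and relations.

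First I would reduce the global hypothesis to a finite one. Since $R_M = \varinjlim_{s \notin M} R_s$ and $\St$ commutes with filtered colimits, triviality of the image of $\alpha$ in $\St(\Phi, R_M[t])$ forces $\alpha$ to become trivial already in $\St(\Phi, R_{s}[t])$ for some $s = s_M \notin M$. As $M$ ranges over all maximal ideals the elements $s_M$ generate the unit ideal, so by compactness there are finitely many $s_1, \dots, s_n$ with $(s_1, \dots, s_n) = R$ such that the image of $\alpha$ in $\St(\Phi, R_{s_i}[t])$ is trivial for each $i$.

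The heart of the matter is the following \emph{dilation lemma}: if the image of $\alpha$ in $\St(\Phi, R_s[t])$ is trivial, then there is an integer $N \geq 1$ such that $\mu_a(\alpha) = \mu_b(\alpha)$ whenever $a \equiv b \pmod{s^N R}$. Granting this, the theorem follows by telescoping. Applying the lemma to each $s_i$ yields exponents $N_i$; since $(s_1, \dots, s_n) = R$ we also have $(s_1^{N_1}, \dots, s_n^{N_n}) = R$, so we may write $1 = \sum_{i=1}^n c_i s_i^{N_i}$. Put $a_0 = 0$ and $a_j = \sum_{i \leq j} c_i s_i^{N_i}$, so that $a_n = 1$ and $a_j - a_{j-1} \in s_j^{N_j} R$. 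The dilation lemma applied with $s=s_j$ gives $\mu_{a_j}(\alpha) = \mu_{a_{j-1}}(\alpha)$, whence
\[ \alpha = \mu_1(\alpha) = \mu_{a_n}(\alpha) = \mu_{a_0}(\alpha) = \mu_0(\alpha) = 1. \]

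I expect the dilation lemma to be the main obstacle. Its proof should proceed by lifting the relation witnessing triviality of $\alpha$ over $R_s$ back to $R$ while controlling denominators: one expresses the relative element over $R_s$ in terms of a manageable set of relative generators, and then uses the Chevalley commutator formula—with its integer structure constants, which for $\Phi=\rE_\ell$ are especially simple as the system is simply laced—to show that replacing the parameter $a$ by any $b \equiv a \pmod{s^N}$ alters the word only by commutators whose arguments carry enough powers of $s$ to collapse. Making this bookkeeping precise, and in particular securing the requisite generation statement for $\St(\Phi, R[t], tR[t])$, is where the rank restriction $\ell \geq 6$ and the structure of $\rE_\ell$ enter, and is the step demanding the most care.
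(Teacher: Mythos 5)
Your overall framework is sound and in fact mirrors the paper's: the ``only if'' direction is functoriality; your reduction to finitely many $s_i$ with $(s_1,\dots,s_n)=R$ uses the same filtered-colimit property of Steinberg groups that the paper invokes inside its Quillen-set argument; and your telescoping along $a_j=\sum_{i\le j}c_is_i^{N_i}$ is a legitimate, standard substitute for the paper's two-element patching (Lemma~\ref{T23}) followed by the Quillen-set formalism. Your dilation lemma is also the correct key statement: the paper obtains exactly this form by applying its Lemma~\ref{QScor} to the element $\beta(t,t_1,t_2)=\alpha(t_1t)\,\alpha^{-1}((t_1+t_2)t)$ over $S=R[t,t_1]$ and then specializing $t_1,t_2$; so modulo that lemma your proof is complete and slightly more streamlined than the paper's.

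The genuine gap is the proof of the dilation lemma itself, which is the entire mathematical content of the theorem and which you leave as a sketch pointing in the wrong direction. Your proposed route --- lift a relation witnessing triviality over $R_s$ back to $R$ and control denominators by commutator bookkeeping with the (simply laced) structure constants --- is precisely the naive approach that does not work: a relation expressing $\lambda_s^*(\alpha)=1$ in $\St(\Phi,R_s[t])$ involves conjugates of generators $x_\gamma(\xi)$ with arbitrary powers $s^{-k}$ in their arguments, and there is no a priori bound allowing you to clear these denominators after the substitution $t\mapsto at$; the Chevalley commutator formula alone gives no control over how such a relation deforms. The paper's actual proof of dilation requires machinery built expressly for this purpose: (i) the presentation of the relative Steinberg group as a colimit of relative groups over $\rA_3$-subsystems (Theorem~\ref{relPres}), upgraded to $\rA_4$-subsystems for $\Phi=\rE_\ell$ (Corollary~\ref{relPresE}, resting on the root-theoretic Lemma~\ref{ext34}); (ii) Tulenbaev's map $T_\Psi$ for each linear piece $\St(5,\cdot)$ (Lemma~\ref{Tmap}), which is where van der Kallen--Tulenbaev ``another presentation'' of the linear Steinberg group enters; and (iii) the gluing of these maps over the amalgam (Lemma~\ref{glue_t}), yielding a map $T_\Phi$ with $\theta^*=T_\Phi\circ\lambda_a^*$, so that $\lambda_a^*(h)=1$ forces $\theta^*(h)=1$, and a directed-colimit argument then gives $h(a^nt)=1$. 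This is also where the hypothesis on $\Phi$ genuinely enters: not through bookkeeping over $\rE_\ell$, but through the existence of Tulenbaev's map in rank $\geq 4$ and the facts that every $\rA_3$ in $\rE_\ell$ extends to an $\rA_4$ and that $\rA_4$'s through a common root can be chained via common $\rA_2$'s --- properties that fail, for instance, for $\rD_\ell$. Without an argument of this kind (or an equivalent excision-type statement), your proposal is a correct reduction of the theorem to its hard kernel, not a proof of it.
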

In the above statement $\St(\Phi, R[t], tR[t])$ denotes the relative Steinberg group defined in section~\ref{relativeSteinberg}.
Notice that for $\Phi=\rA_\ell$, $\ell\geq 4$ a similar result was obtained by M.~Tulenbaev (cf.~\cite[Theorem~2.1]{T}).
In turn, Tulenbaev's theorem should be considered as a $\K_2$-analogue of D.~Quillen's local-global principle for projective modules \cite[Theorem~1]{Q76}
and A.~Suslin's local-global principle for $\K_1$ (see~\cite[Theorem~3.1]{Su}).

Within the context of theory of lower $\K$-functors modeled on Chevalley groups (see~\cite{St1}, \cite{St78}) centrality of $\K_2$ has a number of corollaries.
\begin{cor}\label{topoCorr} In the assumptions of Theorem~\ref{centrality} the following facts hold.
\begin{enumerate}[label=(\alph*)]
 \item \label{UCE} The group $\St(\Phi, R)$ is a universal central extension of $\E(\Phi, R)$ i.\,e. an initial object of the category of central extensions of $\E(\Phi, R)$. 
 \item \label{URCE} The relative Steinberg group $\St(\Phi, R, I)$ defined in section~\ref{relativeSteinberg} is a universal relative central extension of the morphism $\pi^*\colon \St(\Phi, R) \twoheadrightarrow \St(\Phi, R/I)$ in the sense of~\cite[\S~3]{Lo} (see definition below).
 \end{enumerate} \end{cor}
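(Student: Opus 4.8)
The plan is to derive both statements from the general theory of universal central extensions, the only non-formal input being the centrality of $\K_2$ supplied by Theorem~\ref{centrality}. Throughout I would use the standard criterion for universal central extensions (see, e.g.,~\cite{St1, Lo}): a central extension $p\colon E \twoheadrightarrow G$ of a perfect group $G$ is initial in the category of central extensions of $G$ precisely when $E$ is perfect and every central extension of $E$ splits, equivalently when $E$ is superperfect, i.e.\ $\HH_1(E) = \HH_2(E) = 0$.

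For part~(a), I would first observe that $\St(\Phi, R)$ and $\E(\Phi, R)$ are perfect: since $\rk \Phi \geq 2$, the Chevalley commutator formula expresses each generator $x_\alpha(\xi)$ as a product of commutators of generators, so $\St(\Phi, R) = [\St(\Phi, R), \St(\Phi, R)]$, and the same holds for $\E(\Phi, R)$. By Theorem~\ref{centrality} the canonical map $\phi$ has central kernel $\K_2(\Phi, R)$, so $\phi$ is genuinely an object of the category of central extensions of $\E(\Phi, R)$. By the criterion it then remains to verify that $\St(\Phi, R)$ is superperfect. I would prove $\HH_2(\St(\Phi, R)) = 0$ by the relation-lifting argument: given a central extension of $\St(\Phi, R)$ with central kernel $A$, lift the generators to elements $\tilde{x}_\alpha(\xi)$; the Chevalley commutator relations lift canonically because $A$ is central, while the additivity relations $x_\alpha(\xi) x_\alpha(\zeta) = x_\alpha(\xi + \zeta)$ produce a symmetric $A$-valued $2$-cochain that is trivialised, after re-choosing the lifts, using the commutator relations inside the rank-$2$ subsystems of $\Phi$. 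This bookkeeping is uniform in $\ell$ and rank-$2$-local; for local $R$ it specialises to~\cite{St73}, and the covering machinery for Chevalley groups over arbitrary commutative rings is that of~\cite{St1}. With superperfectness established, the criterion identifies $\phi$ as the universal central extension, which is part~(a).

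For part~(b) I would transfer this to Loday's relative setting~\cite[\S 3]{Lo}. The quotient $R \twoheadrightarrow R/I$ induces the surjection $\pi^*\colon \St(\Phi, R) \twoheadrightarrow \St(\Phi, R/I)$, and by part~(a) its source and target are the absolute universal central extensions of $\E(\Phi, R)$ and $\E(\Phi, R/I)$, respectively. The claim is that $\St(\Phi, R, I)$, as defined in Section~\ref{relativeSteinberg}, realises the universal relative central extension of $\pi^*$. I would verify Loday's universal property directly: given any relative central extension of $\pi^*$, lift the relative generators, invoking Theorem~\ref{centrality} to ensure that the relevant kernels are central and that the relative additivity and commutator relations lift (existence), while relative perfectness forces such a lift to be unique. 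Conceptually this reduces, through Loday's exact sequence comparing the absolute Schur multipliers of $\St(\Phi, R)$ and $\St(\Phi, R/I)$ with the relative one, to the absolute vanishing $\HH_2 = 0$ from part~(a) applied to both $R$ and $R/I$, together with centrality placing the relative kernel $\K_2(\Phi, R, I)$ centrally.

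The main obstacle throughout is the superperfectness input, namely that the Steinberg relations — absolute in part~(a) and relative in part~(b) — can be simultaneously lifted to any central extension; once this relation-lifting is in place, everything else is formal manipulation of central extensions, with the genuinely new ingredient, centrality of $\K_2$, already provided by Theorem~\ref{centrality}. I expect the rank-$2$ cocycle computation to be the most delicate point in the relative case, since it must be carried out compatibly over $R$ and over $R/I$.
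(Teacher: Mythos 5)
Your part~(a) is correct and follows essentially the same route as the paper: Theorem~\ref{centrality} makes $\phi$ a central extension, and universality then follows from superperfectness of $\St(\Phi,R)$. The paper simply quotes superperfectness from~\cite[Theorem~5.3]{St1} instead of re-running the relation-lifting cocycle argument you sketch (which is the content of that reference), so the difference is only one of citation versus re-proof.

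Part~(b) has a genuine gap. Before any universality argument can begin, one must verify that $(\St(\Phi,R,I),\St(\Phi,R),\overline{p_2^*})$ is a \emph{relative central extension} of $\pi^*$ at all, i.e.\ that the induced action of the cokernel $\St(\Phi,R/I)$ on $\Ker(\overline{p_2^*})=(G_1\cap G_2)/C$ (see sequence~\ref{suite}) is trivial. This is the one non-formal point, and your proposal does not address it. You invoke ``centrality placing the relative kernel $\K_2(\Phi,R,I)$ centrally,'' but this misses the target twice: first, $\K_2(\Phi,R,I)=\Ker(\St(\Phi,R,I)\to\E(\Phi,R,I))$ is not the kernel of the crossed-module map $\overline{p_2^*}$ whose triviality of outer action is required; second, centrality of $\pi_2$ inside $M$ holds for \emph{every} crossed module, so it is not the issue --- what must be proved is that the quotient $\St(\Phi,R/I)$ acts trivially on the kernel, a strictly stronger statement. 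The paper's proof of this point applies Theorem~\ref{centrality} neither over $R$ nor over $R/I$ (which is all your proposal uses), but over the double ring $D(R,I)$: since $G_1\cap G_2\subseteq\K_2(\Phi,D(R,I))$, centrality of $\K_2$ for the commutative ring $D(R,I)$ gives $G_1\cap G_2\subseteq\Cent(\St(\Phi,D(R,I)))$, and hence the conjugation action of $\St(\Phi,R)$ --- a fortiori the induced action of $\St(\Phi,R/I)$ --- on $(G_1\cap G_2)/C$ is trivial. Without this step, your candidate is not even known to be an object of the category in which you want it to be initial.

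As for universality itself, your ``lift the relative generators, the relations lift, relative perfectness forces uniqueness'' is only a sketch: making it precise means constructing, for an arbitrary relative central extension of $\pi^*$, a $\St(\Phi,R)$-equivariant morphism out of $\St(\Phi,R,I)$, i.e.\ verifying relations \eqref{R1}--\eqref{R6} of Proposition~\ref{SwanP} (notably the action relation~\eqref{R6}) in the target crossed module; this is exactly what the sufficient criterion of~\cite[Proposition~6]{Lo}, which the paper invokes, packages. Similarly, your claimed reduction ``through Loday's exact sequence'' to $\mathrm{H}_2=0$ over $R$ and $R/I$ is asserted rather than proved. These points could be repaired by citing Loday as the paper does, but the missing double-ring centrality argument is the substantive gap.
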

Let $\nu\colon N \twoheadrightarrow Q$ be a surjective map of abstract groups.
Recall that a {\it relative central extension} of $\nu$ is, by definition, a crossed module $C=(M, N, \mu)$ such that $\coker(\mu) = \nu$ and the induced action of $Q=\pi_1(C)$ on $\Ker(\mu)=\pi_2(C)$ is trivial.
A morphism of relative central extensions $(M, N, \mu) \rightarrow (M', N, \mu)$ of $\nu$ is a group morphism $\alpha\colon M\rightarrow M'$ preserving the action of $N$ and satisfying $\mu'\alpha=\mu$.
Now, a universal relative central extension of $\nu$ is, by definition, an initial object of the category of relative central extensions of $\nu$.

 \begin{cor}\label{topoCorr2} In the assumptions of Theorem~\ref{centrality} the following facts hold.
 \begin{enumerate}[label=(\alph*)]
 \item \label{RelSeq} If one defines $\K_2(\Phi, R, I)$ as the kernel of $\phi\colon\St(\Phi, R, I)\rightarrow\E(\Phi, R, I)$ then there is an exact sequence
 \begin{multline}\nonumber
     \mathrm{H}_3(\St(\Phi, R), \mathbb{Z}) \rightarrow \mathrm{H}_3(\St(\Phi, R/I), \mathbb{Z}) \rightarrow \K_2(\Phi, R, I) \rightarrow \K_2(\Phi, R) \rightarrow \K_2(\Phi, R/I) \rightarrow \\
     \rightarrow \K_1(\Phi, R, I) \rightarrow \K_1(\Phi, R) \rightarrow \K_1(\Phi, R/I).\end{multline}
 \item \label{Gersten} Let $X(R)$ be the result of application of Quillen's $+$-construction to the classifying space $B\GG(\Phi, R)$ with respect to the (perfect) subgroup $\E(\Phi, R)$.
                       Then the unstable groups $\K_i(\Phi, R)$ of M.~Stein coincide with $\pi_i(X(R))$, $i=1,2$ and the following unstable analogue of Gersten formula holds: $\pi_3(X(R))\cong \mathrm{H}_3(\St(\Phi, R), \mathbb{Z})$. \end{enumerate} \end{cor}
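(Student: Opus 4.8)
The plan is to deduce both assertions from Theorem~\ref{centrality} and Corollary~\ref{topoCorr} by feeding them into the homotopy theory of Quillen's plus-construction. Throughout I abbreviate $\E=\E(\Phi,R)$ and $\St=\St(\Phi,R)$. Recall that $\E$ is perfect for $\rk(\Phi)\geq 2$, and that by Corollary~\ref{topoCorr}\ref{UCE} the extension $\phi\colon\St\twoheadrightarrow\E$ is a universal central extension; in particular $\St$ is superperfect, $\HH_1(\St,\mathbb{Z})=\HH_2(\St,\mathbb{Z})=0$.

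First I would establish~\ref{Gersten}, since it provides the identifications needed for~\ref{RelSeq}. By the very definition of the plus-construction with respect to the perfect normal subgroup $\E\trianglelefteq\GG(\Phi,R)$, we get $\pi_1(X(R))=\GG(\Phi,R)/\E=\K_1(\Phi,R)$, and the universal cover of $X(R)$ is $B\E^{+}$. As the plus-construction is a homology isomorphism, the Hurewicz theorem yields $\pi_2(X(R))=\pi_2(B\E^{+})=\HH_2(\E,\mathbb{Z})$; this equals $\K_2(\Phi,R)$ because the kernel of a universal central extension of a perfect group is its Schur multiplier. For the top group I would use that, after applying $B(-)^{+}$, the universal central extension $\St\twoheadrightarrow\E$ produces a fibration $K(\K_2(\Phi,R),1)\to B\St^{+}\to B\E^{+}$ in which $B\St^{+}$ is $2$-connected by superperfectness. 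Its long exact sequence gives $\pi_3(B\E^{+})\cong\pi_3(B\St^{+})$, and a final appeal to Hurewicz and to homology invariance of $B(-)^{+}$ gives $\pi_3(X(R))=\pi_3(B\E^{+})=\HH_3(B\St^{+})=\HH_3(\St,\mathbb{Z})$, as claimed.

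For~\ref{RelSeq} I would apply the functor $R\mapsto X(R)$ to the projection $R\twoheadrightarrow R/I$ and take the long exact sequence of the homotopy fibre $F=\mathrm{hofib}\bigl(X(R)\to X(R/I)\bigr)$. Inserting the identifications of~\ref{Gersten} for the absolute terms, this sequence becomes
\begin{multline}\nonumber
\HH_3(\St(\Phi,R),\mathbb{Z})\to\HH_3(\St(\Phi,R/I),\mathbb{Z})\xrightarrow{\partial}\pi_2(F)\to\K_2(\Phi,R)\to\K_2(\Phi,R/I)\xrightarrow{\partial}\\
\pi_1(F)\to\K_1(\Phi,R)\to\K_1(\Phi,R/I),
\end{multline}
so it remains to identify $\pi_i(F)$ with $\K_i(\Phi,R,I)$ for $i=1,2$. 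The case $i=1$ is the relative version of the computation $\pi_1=\GG/\E$ carried out above. For $i=2$ I would invoke Corollary~\ref{topoCorr}\ref{URCE}: since $\St(\Phi,R,I)$ is the \emph{universal} relative central extension of $\pi^{*}\colon\St(\Phi,R)\twoheadrightarrow\St(\Phi,R/I)$, its kernel $\K_2(\Phi,R,I)$ computes the relative second homotopy group, precisely as in Loday's analysis of the relative Steinberg group~\cite{Lo}.

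The step I expect to be the main obstacle is exactly this last identification $\pi_2(F)\cong\K_2(\Phi,R,I)$. In the absolute case the Schur-multiplier description of the kernel of a universal central extension does all the work, whereas in the relative case one must match the homotopy fibre $F$ with a suitable relative plus-construction and check that the universal relative central extension of Corollary~\ref{topoCorr}\ref{URCE} has the correct homotopy type. Concretely this is a relative $2$-connected-cover argument, verifying that a plus-construction of $B\St(\Phi,R,I)$ models the fibre of $B\St^{+}\to B\St(\Phi,R/I)^{+}$, after which the relative Hurewicz theorem reads off $\pi_1$ and $\pi_2$ of $F$. Once this comparison is secured, the displayed sequence is merely the tail of the fibration long exact sequence and~\ref{RelSeq} follows; purely algebraically, the same sequence also drops out of the relative five-term homology sequence attached to the crossed module $\St(\Phi,R,I)\to\St(\Phi,R)$ via~\cite{Lo}.
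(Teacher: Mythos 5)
Your overall strategy is the same as the paper's: the author disposes of part~\ref{Gersten} by repeating \cite[Corollary~3]{La} and Gersten's argument \cite{Ge} --- exactly the chain you spell out (universal cover of $X(R)$ is $B\E^+$, Hurewicz, the $2$-connected cover $B\St^+$) --- and of part~\ref{RelSeq} by repeating the proof of \cite[Theorem~4]{Lo}, i.e.\ a fibration long exact sequence combined with the universal relative central extension of Corollary~\ref{topoCorr}\ref{URCE}. Your reconstruction of part~\ref{Gersten} is correct as written.

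In part~\ref{RelSeq}, however, the concrete mechanism you propose for the key identification has a genuine gap. First, $\St(\Phi,R,I)$ is not perfect in general (for $I^2=0$ it surjects onto $\E(\Phi,R,I)$, which is a nontrivial abelian group, being contained in the congruence kernel $\Ker(\GG(\Phi,R)\to\GG(\Phi,R/I))$, isomorphic for square-zero $I$ to the additive group of the Lie algebra tensored with $I$); so ``a plus-construction of $B\St(\Phi,R,I)$'' does not yield the simply connected model your comparison needs. Second, the homotopy fibre $F'$ of $B\St(\Phi,R)^+\to B\St(\Phi,R/I)^+$ cannot model the fibre $F$ of $X(R)\to X(R/I)$: since total space and base are $2$-connected by superperfectness, $F'$ is simply connected with $\pi_2(F')\cong\Coker\bigl(\HH_3(\St(\Phi,R),\mathbb{Z})\to\HH_3(\St(\Phi,R/I),\mathbb{Z})\bigr)$, whereas you need $\pi_1(F)=\K_1(\Phi,R,I)$ and $\pi_2(F)=\K_2(\Phi,R,I)$. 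Indeed $\pi_2(F')$ matches only the subgroup $\Ker\bigl(\K_2(\Phi,R,I)\to\K_2(\Phi,R)\bigr)$, which coincides with the kernel $(G_1\cap G_2)/C$ of the crossed module $\overline{p_2^*}\colon\St(\Phi,R,I)\to\St(\Phi,R)$ from sequence~\ref{suite}. This is also where your phrase ``its kernel $\K_2(\Phi,R,I)$'' conflates two different groups: the universal property in Corollary~\ref{topoCorr}\ref{URCE} directly controls $(G_1\cap G_2)/C$, not the larger group $\K_2(\Phi,R,I)=\Ker(\St(\Phi,R,I)\to\E(\Phi,R,I))$. What repeating \cite[Theorem~4]{Lo} really requires is splicing two ingredients: the homotopical identification of $(G_1\cap G_2)/C$ with $\Coker(\HH_3\to\HH_3)$ coming from $F'$ and universality, and the purely algebraic exactness of the remaining terms, which uses $\Img(\overline{p_2^*})=\Ker(\pi^*)$ (sequence~\ref{suite}) and the relative sequences of \cite{St78}. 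With that replacement your outline goes through; as literally stated, the ``relative $2$-connected-cover'' step would fail.
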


Tulenbaev's proof of~\cite[Theorem~2.1]{T} crucially depends on a certain property of the linear Steinberg group which is, in essence, a particular case of excision property (see~Remark~\ref{excision}).
In turn, our proof of Theorem~\ref{QSlemma} goes as follows.
First, we show that the relative Steinberg group of type $\rE_\ell$ can be presented as amalagamated product of several copies of Steinberg groups of type $\rA_4$.
Next, we use this presentation to extend the specified property from pieces of type $\rA_4$ to the whole group $\St(\rE_\ell, R, I)$.

The rest of the article is organized as follows. 
In section~\ref{prelim} we introduce principal notation and prove certain facts about root systems.
In section~\ref{relativeSteinberg} we define unstable relative Steinberg groups following the approach of F.~Keune and J.-L.~Loday.
We also state some results comparing this definition with the one used by M.~Tulenbaev in~\cite{T}.
Finally, in section~\ref{mainReduction} we prove our main results.
 
\subsection{Acknowledgements}
The author is grateful to V.~Petrov and S.~O.~Ivanov who suggested two key ideas upon which this article is based.
The author would also like to thank E.~Kulikova, A.~Luzgarev, A.~Lavrenov, A.~Smolensky, V.~Sosnilo, A.~Stavrova, N.~Vavilov and the referee for their numerous helpful comments which helped to improve this article.
The author acknowledges financial support from Russian Science Foundation grant 14-11-00297.

\section {Preliminaries}\label{prelim}
Throughout this paper all commutators are left-normed i.\,e. $[x, y]=xy x^{-1}y^{-1}$ and all rings are unital and commutative unless otherwise stated.

\subsection{Root-theoretic lemmata}
In what follows $\Phi$ denotes a reduced irreducible root system and $\Pi\subseteq \Phi$ denotes its basis (i.e. the set of simple roots).
Denote by $\widetilde{\alpha}$, $\Phi^+$ and $\Phi^-$, respectively, the maximal root of $\Phi$ and the subsets of positive and negative roots of $\Phi$.
The Dynkin diagram and the extended Dynkin diagram of $\Phi$ corresponding to $\Pi$ will be denoted by $D(\Phi)$, $\widetilde{D}(\Phi)$, respectively.

A proper closed root subset $S\subseteq \Phi$ is called {\it parabolic} (resp. {\it reductive}, resp. {\it special}) if $\Phi=S \cup -S$ (resp. $S = -S$, resp. $S \cap -S=\varnothing$).
Any parabolic subset $S \subseteq \Phi$ can be decomposed into the disjoint union of its reductive and special part, i.e. $S = \Sigma_S \sqcup \Delta_S$, where $\Sigma_S \cap (-\Sigma_S) = \varnothing$, $\Delta_S = -\Delta_S$.
 
Denote by $m_\beta(\alpha)$ the coefficient of $\beta$ in the expansion of $\alpha$ in $\Pi$, i.\,e. $\alpha = \sum_{\beta\in\Pi} m_\beta(\alpha) \beta$.
For $\beta\in\Pi$ denote by $\Delta_\beta$ the subsystem of $\Phi$ spanned by all simple roots except $\beta$ and by $\Sigma_\beta$ the set consisting of roots $\alpha\in \Phi$ such that $m_\beta(\alpha)>0$.

We denote by $(\alpha, \beta)$ the scalar product of roots and by $\langle \beta, \alpha\rangle$ the integer $2(\beta, \alpha)/(\alpha, \alpha)$.
The Weyl group $W(\Phi)$ is a subgroup of isometries of $\Phi$ generated by all reflections $\sigma_\alpha$, where $\sigma_\alpha(\beta)=\beta-\langle\beta,\alpha \rangle\cdot \alpha$.
For a subset of roots $S \subseteq \Phi$ we denote by $\langle S \rangle$ the root subsystem spanned by $S$, i.e. the minimal subset of $\Phi$ containing $S$ and invariant under the action of reflections $\sigma_\alpha$, $\alpha\in S$.

For a simply laced $\Phi$ (i.e. $\Phi$ such that $\widetilde{D}(\Phi)$ does not contain multiple bonds) denote by $\Phi'$ the subsystem of $\Phi$ consisting of all roots orthogonal to $\widetilde{\alpha}$.
From a consideration of $\widetilde{D}(\Phi)$ it follows that $\Phi'$ has the following type (depending on the type of $\Phi$).
\[\begin{tabular}{| l | l | l | l | l | l | l |} \hline
    Type of $\Phi$  & $\rA_\ell,\ \ell\geq 3$     & $\rD_\ell,\ \ell\geq 4$           & $\rE_6$ & $\rE_7$ & $\rE_8$ \\ \hline
    Type of $\Phi'$ & $\rA_{\ell-2}$ & $\rA_1+\rD_{\ell-2}$ & $\rA_5$ & $\rD_6$ & $\rE_7$ \\ \hline \end{tabular}\]
Notice that in the above table we identify $\rD_2$ with $\rA_1+\rA_1$ and $\rD_3$ with $\rA_3$.
    
\begin{lemma}\label{rpLemma} Let $\Phi$ be a simply laced irreducible root system of rank $\geq 3$.
Every two roots $\alpha, \beta \in \Phi$ are contained in some subsystem $\Psi\subseteq\Phi$ of type $\rA_3$. \end{lemma}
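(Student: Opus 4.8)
The plan is to reduce the statement to a short case analysis according to the type of the rank-$\leq 2$ subsystem $\langle\alpha,\beta\rangle$. Normalizing all roots to $(\gamma,\gamma)=2$, so that $(\alpha,\beta)=\langle\alpha,\beta\rangle$ is an integer, any two distinct non-proportional roots satisfy $(\alpha,\beta)\in\{0,\pm1\}$; hence $\langle\alpha,\beta\rangle$ is one of $\rA_1$ (when $\beta=\pm\alpha$), $\rA_1+\rA_1$ (when $\alpha\perp\beta$), or $\rA_2$ (when $(\alpha,\beta)=\pm1$). Since every subsystem is closed under negation, I may replace $\beta$ by $-\beta$; and if $\beta=\pm\alpha$ I may replace $\beta$ by any root $\delta$ with $(\alpha,\delta)=-1$ (one exists because $\Phi$ is irreducible, so $\alpha$ is not orthogonal to all other roots), since any $\rA_3$ containing $\alpha$ and $\delta$ also contains $\beta$. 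Thus it suffices to treat the two cases $(\alpha,\beta)=-1$ and $(\alpha,\beta)=0$, and in each it is enough to exhibit a third root $\gamma$ with $\langle\alpha,\beta,\gamma\rangle$ of type $\rA_3$; this can be read off from the $3\times3$ Gram matrix, which is the Cartan matrix of $\rA_3$ exactly when the three roots form a chain in the non-orthogonality graph.

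The key tool I would isolate is the following spanning fact: for irreducible $\Phi$ and any root $\alpha$, the roots non-orthogonal to $\alpha$ span the ambient space $V$. To prove it, let $U$ be their span and suppose some root $\eta$ lies outside $U$; then $(\eta,\alpha)=0$, and for every root $\gamma$ with $(\gamma,\alpha)\neq0$ the reflection $\sigma_\eta(\gamma)=\gamma-(\gamma,\eta)\eta$ again satisfies $(\sigma_\eta(\gamma),\alpha)=(\gamma,\alpha)\neq0$, hence lies in $U$; as $\gamma\in U$ as well, this forces $(\gamma,\eta)=0$. Therefore $\eta$ is orthogonal to all of $U$, and partitioning $\Phi$ into roots lying in $U$ and roots orthogonal to $U$ contradicts irreducibility (or the fact that $\Phi$ spans $V$). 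In the orthogonal case $(\alpha,\beta)=0$, applying this to $\alpha$ immediately produces a root $\gamma$ with $(\gamma,\alpha)\neq0\neq(\gamma,\beta)$, for the roots non-orthogonal to $\alpha$ cannot all lie in the hyperplane orthogonal to $\beta$. After replacing $\gamma$ by $-\gamma$ if necessary we obtain $(\gamma,\alpha)=(\gamma,\beta)=-1$, so $\{\alpha,\gamma,\beta\}$ is a base of type $\rA_3$ with $\alpha,\beta$ as its endpoints.

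For the case $(\alpha,\beta)=-1$ I would argue by irreducibility alone: writing $V_0=\operatorname{span}(\alpha,\beta)$, if every root outside $V_0$ were orthogonal to $V_0$ then $\Phi$ would split as the orthogonal union of $\langle\alpha,\beta\rangle$ and $\Phi\cap V_0^{\perp}$, contradicting irreducibility; hence there is a root $\delta\notin V_0$ non-orthogonal to $V_0$, say $(\delta,\alpha)\neq0$. If $(\delta,\beta)=0$ then $\{\delta,\alpha,\beta\}$ is already an $\rA_3$-chain. Otherwise $(\delta,\alpha),(\delta,\beta)\in\{\pm1\}$; if they have the same sign the Gram matrix of $\{\alpha,\beta,\delta\}$ degenerates, forcing $\delta\in V_0$, which is excluded, so their signs are opposite, and then replacing $\delta$ by $\sigma_\alpha(\delta)$ or $\sigma_\beta(\delta)$ yields a root orthogonal to one of $\alpha,\beta$ and non-orthogonal to the other, again giving an $\rA_3$-chain. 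I expect the main obstacle to be the orthogonal case: there one genuinely needs the global spanning fact rather than a single ``root outside the plane'', since such a lone root may happen to be orthogonal to $\beta$, and so verifying the spanning fact cleanly via the reflection argument is the technical heart of the proof.
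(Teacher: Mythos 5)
Your proposal is correct, but it reaches the lemma by a genuinely different route than the paper, most visibly in the orthogonal case. For $(\alpha,\beta)\neq 0$ the two arguments share the same core idea: irreducibility yields a root $\gamma$ outside $\Psi\cup\Psi^{\perp}$, where $\Psi=\langle\{\alpha,\beta\}\rangle$ is the $\rA_2$ spanned by the pair. The paper, however, finishes in one stroke by observing that $\langle\Psi\cup\{\gamma\}\rangle$ is an irreducible, simply laced root system of rank $3$, hence automatically of type $\rA_3$; that observation would let you delete your same-sign/opposite-sign subcases and the reflection adjustments entirely. The real divergence is the case $\alpha\perp\beta$: the paper conjugates by the Weyl group so that $\alpha=-\widetilde{\alpha}$, reads the type of $\Phi'$ (the roots orthogonal to $\widetilde{\alpha}$) off its table, treats $\rD_\ell$ separately, and uses transitivity of $W(\Phi'')$ on $\Phi''$ to move $\beta$ into position next to a simple root adjacent to $\alpha$ on the extended Dynkin diagram. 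You replace all of this by the type-free spanning lemma (the roots non-orthogonal to $\alpha$ span $V$, proved by the reflection argument), which is more elementary and completely uniform: no classification table, no case split by type, and it applies verbatim to every irreducible simply laced system of rank $\geq 3$. What the paper's heavier route buys is that the apparatus it sets up --- the normalization $\alpha=-\widetilde{\alpha}$, the structure of $\Phi'$, and Weyl-group transitivity --- is reused immediately in the proofs of Lemma~\ref{a2pc} and Lemma~\ref{ext34}, so its cost is amortized over the whole section.

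One cosmetic slip to fix: in the orthogonal case you claim that replacing $\gamma$ by $-\gamma$ achieves $(\gamma,\alpha)=(\gamma,\beta)=-1$. Flipping $\gamma$ flips \emph{both} inner products at once, so if they start with opposite signs this single move cannot make both negative. This costs nothing: either also replace $\beta$ by $-\beta$ (a reduction you have already justified), or simply note that a chain with arbitrary signs of the two nonzero inner products has nondegenerate Gram matrix and generates an irreducible rank-$3$ simply laced subsystem, hence an $\rA_3$ containing $\alpha$ and $\beta$ regardless of signs.
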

\begin{proof}
If $\alpha$, $\beta$ are not orthogonal then they are contained in some subsystem of type $\rA_2$, denote it by $\Psi$.
From the irreducibility of $\Phi$ it follows that the set $D = \Phi\setminus(\Psi \cup \Psi^\bot )$ is nonempty.
Now for any $\gamma \in D$ the subsystem $\langle \Psi \cup \{\gamma\} \rangle$ is the required one.

Now assume that $\alpha\perp\beta$. We can choose a basis of $\Phi$ so that $\alpha=-\widetilde{\alpha}$. 
Denote by $\gamma$ any of the simple roots adjacent to $\alpha$ on $\widetilde{D}(\Phi)$. 
 
If $\Phi=\rD_\ell$, $\ell\geq 4$ and $\beta$ is contained in a component of $\Phi'$ of type $\rA_1$, then $\Psi := \langle\{ \alpha, \gamma, \beta \} \rangle$ satisfies the requirement of the lemma.
If $\rk\Phi\geq 5$ denote by $\Phi''$ either $\Phi'$ itself (if $\Phi\neq \rD_\ell$) or the irreducible component $\rD_{\ell-2}$ of $\Phi'$ (if $\Phi=\rD_\ell$). 
In the remaining case we may assume that $\beta$ lies in $\Phi''$. Denote by $\delta\in\Phi''$ the simple root adjacent to $\gamma$.
The Weyl group $W(\Phi'')$ fixes $\alpha$ and acts transitively on $\Phi''$.
Therefore, there exists $w\in W(\Phi'')$ such that $w(\delta) = \beta$ and $\Psi := w(\langle \{ \alpha, \gamma, \delta \} \rangle)$ is the desired subsystem. 
\end{proof}

Denote by $A_n(\Phi)$ the set of all subsystems $\Psi\subseteq\Phi$ of type $\rA_n$. The group $W(\Phi)$ naturally acts on $A_n(\Phi)$.
If $\alpha$ is a root of $\Phi$ denote by $A_n(\Phi, \alpha)$ the subset consisting of subsystems $\Psi\in A_n(\Phi)$ containing $\alpha$.

\begin{lemma}\label{a2pc} Let $\Phi$ be an irreducible simply laced root system of rank $\geq 3$.
Let $\Psi_0, \Psi_1\subseteq \Phi$ be two different subsystems of type $\rA_3$ containing a common root $\alpha$.
Then either $\Psi_0 \cap \Psi_1$ has type $\rA_2$, or there exists a subsystem $\Psi \in A_3(\Phi, \alpha)$ such that $\Psi_0 \cap \Psi$, $\Psi_1 \cap \Psi$ both have type $\rA_2$. \end{lemma}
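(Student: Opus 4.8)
The plan is to analyse the subsystem $\Psi_0\cap\Psi_1$, and, whenever it fails to have type $\rA_2$, to manufacture the intermediate subsystem $\Psi$ by amalgamating one $\rA_2$-plane through $\alpha$ from each of $\Psi_0$ and $\Psi_1$. First I would record two elementary structural facts. The intersection of two root subsystems is again a root subsystem: if $\gamma,\delta\in\Psi_0\cap\Psi_1$ then $\sigma_\gamma(\delta)$ lies in each $\Psi_i$, hence in the intersection. Thus $\Psi_0\cap\Psi_1$ is a subsystem of $\Psi_0\cong\rA_3$ containing $\alpha$, and it is proper, since $\Psi_0\cap\Psi_1=\Psi_0$ would force $\Psi_0\subseteq\Psi_1$ and hence $\Psi_0=\Psi_1$. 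Inspecting the subsystems of $\rA_3$ (realised as $\{e_i-e_j\}$ with $\alpha=e_1-e_2$) shows that its proper subsystems containing $\alpha$ have type $\rA_1$, $\rA_1+\rA_1$, or $\rA_2$, and that the only subsystems of $\rA_3$ containing a fixed $\rA_2$ are that $\rA_2$ and the whole of $\rA_3$. If $\Psi_0\cap\Psi_1$ has type $\rA_2$ the first alternative holds and we are done, so it remains to treat the cases $\rA_1$ and $\rA_1+\rA_1$. In both of these $\Psi_0$ and $\Psi_1$ share no common $\rA_2$-subsystem through $\alpha$, for such an $\rA_2$ would sit inside the proper subsystem $\Psi_0\cap\Psi_1$ and force it to have type $\rA_2$.

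Now I would construct $\Psi$. Every $\rA_3$-subsystem contains an $\rA_2$-subsystem through each of its roots, so choose $\rA_2$-subsystems $P_0\subseteq\Psi_0$ and $P_1\subseteq\Psi_1$ with $\alpha\in P_0\cap P_1$, and set $\Psi:=\langle P_0\cup P_1\rangle$. The central claim is that $\Psi$ has type $\rA_3$. Indeed, by the previous paragraph $P_0\neq P_1$, so $P_0$ and $P_1$ span distinct planes meeting in the line $\mathbb{R}\alpha$ (distinct because a $2$-plane contains at most one $\rA_2$, the roots of $\Phi$ in a plane forming a rank-$\leq 2$ simply laced system); hence $\langle P_0\cup P_1\rangle$ has rank $3$. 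Moreover, since every root of an $\rA_2$ is non-orthogonal to $\alpha$, the set $P_0\cup P_1$ is connected through $\alpha$ and therefore generates an irreducible subsystem. As an irreducible simply laced system of rank $3$ has type $\rA_3$, the claim follows.

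Finally I would verify that $\Psi$ does the job. Since $\Psi_0$ and $\Psi_1$ share no $\rA_2$ through $\alpha$, neither $P_1\subseteq\Psi_0$ nor $P_0\subseteq\Psi_1$ can hold; as $P_1\subseteq\Psi$ this gives $\Psi\neq\Psi_0$, and symmetrically $\Psi\neq\Psi_1$. On the other hand $\Psi\cap\Psi_0$ is a subsystem of $\Psi_0\cong\rA_3$ containing the $\rA_2$-subsystem $P_0$, hence equals $P_0$ or $\Psi_0$; since $\Psi\neq\Psi_0$ it equals $P_0$ and has type $\rA_2$, and likewise $\Psi\cap\Psi_1=P_1$ has type $\rA_2$. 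This produces the required $\Psi\in A_3(\Phi,\alpha)$. The step I expect to demand the most care is the central claim that $\langle P_0\cup P_1\rangle$ is exactly of type $\rA_3$: one must simultaneously control the rank (ruling out a degenerate rank-$2$ span) and the irreducibility (ruling out reducible rank-$3$ possibilities such as $\rA_2+\rA_1$), after which the classification of rank-$3$ simply laced systems pins the type down and the rest is bookkeeping with the subsystem lattice of $\rA_3$.
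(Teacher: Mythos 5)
Your proof is correct, and it shares the paper's core construction --- when $\Psi_0\cap\Psi_1$ is not of type $\rA_2$, glue together an $\rA_2$-subsystem through $\alpha$ taken from each of $\Psi_0$ and $\Psi_1$ --- but your verification that the glued system works is genuinely different from the paper's. The paper first normalizes the configuration: by transitivity of $W(\Phi)$ it assumes $\alpha=-\widetilde{\alpha}$, fixes a simple root $\gamma$ adjacent to $\alpha$ on $\widetilde{D}(\Phi)$, and shows by a computation with the coefficients $m_\gamma(\beta)$ that every member of $A_3(\Phi,\alpha)$ has the form $\langle\{\alpha,\beta_1,\beta_2\}\rangle$ with $\beta_1,\beta_2\in\Sigma_\gamma$; the assertions that the glued subsystem has type $\rA_3$ and meets each $\Psi_i$ in an $\rA_2$ are then read off from this positioning, with the cases $\rA_1$ and $\rA_1+\rA_1$ for $\Psi_0\cap\Psi_1$ treated by separate explicit choices of generators. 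You instead argue invariantly, with no normalization: the subsystem lattice of $\rA_3$ restricts $\Psi_0\cap\Psi_1$ to types $\rA_1$, $\rA_1+\rA_1$, $\rA_2$; the rank of $\langle P_0\cup P_1\rangle$ is $3$ because two distinct $\rA_2$-planes through the line $\mathbb{R}\alpha$ span a $3$-space (using that a plane carries at most one $\rA_2$); irreducibility holds since every root of $P_0\cup P_1$ is non-orthogonal to $\alpha$; and the classification of irreducible simply laced rank-$3$ systems then forces type $\rA_3$, after which the lattice argument $P_i\subseteq\Psi\cap\Psi_i$ with $\Psi\neq\Psi_i$ pins down both intersections. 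What your route buys: it handles the two non-$\rA_2$ cases uniformly, and it supplies actual proofs of the steps the paper labels as ``clear'' (that the glued system is an $\rA_3$ and that the intersections are $\rA_2$'s). What the paper's route buys: the normalization $\alpha=-\widetilde{\alpha}$ together with the description of $A_3(\Phi,\alpha)$ via pairs from $\Sigma_\gamma$ is the same device already used in Lemma~\ref{rpLemma}, and it yields concrete generators for every subsystem involved, which matches the computational style in which the lemma is applied later in the paper.
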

\begin{proof} As in the proof of the previous lemma we may assume that $\alpha=-\widetilde{\alpha}$. 
Denote by $\gamma$ any simple root adjacent to $\alpha$ on $\widetilde{D}(\Phi)$ (it is unique if $\Phi\neq \rA_\ell$, otherwise there is only one other simple root, denote it by $\gamma'$).

For any $\beta_1, \beta_2 \in \Sigma_\gamma$ the subsystem $\Psi=\langle \{\alpha, \beta_1, \beta_2 \}\rangle$ has type either $\rA_2$ or $\rA_3$.
Moreover, any $\Psi\in A_3(\Phi)$ containing $\alpha$ can be obtained in this fashion. 
Indeed, if $\Phi\neq \rA_\ell$ this follows from the fact that for any $\beta\in\Phi^+\setminus\{-\alpha\}$ not orthogonal to $\alpha$ one has $m_\gamma(\beta) = \frac{(\beta, \alpha)}{(\gamma, \alpha)} = 1$.
In the case $\Phi=\rA_\ell$ it is possible that $m_{\gamma'}(\beta) = 1$, $m_\gamma(\beta) = 0$.
In this case one should additionally use the fact that $\beta' = \widetilde{\alpha} - \beta \in \Phi^+$ and $m_{\gamma}(\beta') = 1$.

Now let $\Psi_0 = \langle \{ \alpha, \beta_1, \beta_2 \} \rangle$, $\Psi_1 = \langle \{ \alpha, \beta_3, \beta_4 \} \rangle$ be two subsystems of type $\rA_3$.
First, consider the case when $\Psi_0 \cap \Psi_1 = \langle \{ \alpha \} \rangle$.
Clearly, $\Psi=\langle \{ \alpha, \beta_1, \beta_3 \} \rangle$ has type $\rA_3$ while both intersections $\Psi\cap \Psi_0$, $\Psi \cap \Psi_1$ have type $\rA_2$.
In the case when $\Psi_0 \cap \Psi_1$ has type $\rA_1 + \rA_1$ we may assume without loss of generality that $\Psi_0 \cap \Psi_1 = \langle \{ \alpha, \gamma \}\rangle $
for $\alpha \perp \gamma$ and $\Psi_0=\langle \{ \alpha, \beta_1, \gamma \}\rangle$, $\Psi_1 = \langle\{ \alpha, \beta_2, \gamma \} \rangle$.
In this case the subsystem $\Psi = \langle \{ \alpha, \beta_1, \beta_2 \} \rangle$ satisfies the requirements of the lemma. \end{proof}

\begin{lemma}\label{ext34} For $\Phi=\rE_\ell$, $\ell=6,7,8$ the following statements hold:
\begin{enumerate}
 \item \label{ExtProp} Any subsystem $\Psi \subseteq \Phi$ of type $\rA_3$ is contained in some $\Psi'\subseteq \Phi$ of type $\rA_4$;
 \item \label{a2pc4}   For any $\Psi_0, \Psi_1 \in A_4(\Phi, \alpha)$ either $\Psi_0\cap \Psi_1$ contains a subsystem from $A_2(\Phi, \alpha)$
       or there exists $\Psi \in A_4(\Phi, \alpha)$ such that both $\Psi\cap\Psi_0$ and $\Psi\cap\Psi_1$ contain a subsystem from $A_2(\Phi, \alpha)$.
\end{enumerate} \end{lemma}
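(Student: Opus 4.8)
The plan is to prove \ref{ExtProp} by a direct root-count and then to deduce \ref{a2pc4} by reducing it to the rank-one-lower statement already established in Lemma~\ref{a2pc}, using \ref{ExtProp} to lift the resulting configuration from type $\rA_3$ to type $\rA_4$.

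For part \ref{ExtProp}, let $\Psi$ be presented by a chain of simple roots $\alpha_1 - \alpha_2 - \alpha_3$, and look for a root $\delta$ with $\delta\perp\alpha_1$, $\delta\perp\alpha_2$ and $(\delta,\alpha_3)\neq0$; appending $\delta$ at the $\alpha_3$-end then yields a chain $\alpha_1-\alpha_2-\alpha_3-\delta$, i.e. a subsystem of type $\rA_4$. To produce such a $\delta$ I would work inside the subsystem $\Xi = \Phi\cap\{\alpha_1,\alpha_2\}^\perp$. Since no root of the $\rA_2$ subsystem $\langle\alpha_1,\alpha_2\rangle$ is orthogonal to $\alpha_1$ while $\alpha_3\perp\alpha_1$, the root $\alpha_3$ does not lie in $\mathrm{span}(\alpha_1,\alpha_2)$, so its projection onto $\{\alpha_1,\alpha_2\}^\perp$ is nonzero. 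It therefore suffices to know that $\Xi$ spans $\{\alpha_1,\alpha_2\}^\perp$, for then some $\delta\in\Xi$ is non-orthogonal to this projection, and in a simply-laced system $(\delta,\alpha_3)\neq0$ together with $\delta\neq\pm\alpha_3$ forces $\langle\delta,\alpha_3\rangle=\pm1$ (replace $\delta$ by $-\delta$ if necessary). The spanning claim follows from the fact that $W(\Phi)$ acts transitively on $A_2(\Phi)$ together with the table above: for one, hence every, $\rA_2$ the complement $\Xi$ has type $\rA_2+\rA_2$, $\rA_5$, $\rE_6$ for $\ell=6,7,8$, each of full rank $\ell-2$.

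For part \ref{a2pc4}, fix $\alpha$ and choose any $\rA_3$ subsystems $\Theta_0\subseteq\Psi_0$ and $\Theta_1\subseteq\Psi_1$ containing $\alpha$; such subsystems exist because every root of $\rA_4$ lies in one of its $\rA_3$ subsystems. If $\Theta_0=\Theta_1$ then $\Psi_0\cap\Psi_1$ already contains the subsystem from $A_2(\Phi,\alpha)$ sitting inside $\Theta_0$, and we are done. Otherwise I would apply Lemma~\ref{a2pc} to the pair $\Theta_0,\Theta_1$. In its first alternative $\Theta_0\cap\Theta_1$ has type $\rA_2$ and contains $\alpha$; since $\Theta_0\cap\Theta_1\subseteq\Psi_0\cap\Psi_1$, the first alternative of \ref{a2pc4} holds. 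In its second alternative we obtain $\Theta\in A_3(\Phi,\alpha)$ with $\Theta\cap\Theta_0$ and $\Theta\cap\Theta_1$ both of type $\rA_2$, and both contain $\alpha$ because they are intersections of subsystems all containing $\alpha$. Extending $\Theta$ to some $\Psi\in A_4(\Phi,\alpha)$ via part \ref{ExtProp} and using $\Theta\subseteq\Psi$, $\Theta_i\subseteq\Psi_i$, we obtain $\Psi\cap\Psi_i\supseteq\Theta\cap\Theta_i$, so $\Psi$ witnesses the second alternative of \ref{a2pc4}.

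I expect the main obstacle to lie entirely in part \ref{ExtProp}: the whole argument hinges on the spanning property of $\Xi$, which is precisely where the specific geometry of $\rE_\ell$ (and the exclusion of small ranks) enters, so the transitivity of $W(\Phi)$ on $A_2(\Phi)$ and the case-by-case identification of $\Xi$ from the table must be pinned down carefully. By contrast, the deduction of \ref{a2pc4} should be essentially formal once \ref{ExtProp} and Lemma~\ref{a2pc} are in hand, the only delicate points being that the $\rA_2$ subsystems produced by Lemma~\ref{a2pc} automatically pass through $\alpha$ and that passing to an $\rA_4$ extension cannot destroy the required intersections.
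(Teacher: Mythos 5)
Your proof is correct, but for part \ref{ExtProp} it takes a genuinely different route from the paper, so a comparison is in order. Your deduction of \ref{a2pc4} (choose $\rA_3$'s through $\alpha$ inside $\Psi_0,\Psi_1$, apply Lemma~\ref{a2pc}, observe that all the intersections automatically contain $\alpha$, and lift the mediating $\rA_3$ to an $\rA_4$ by part \ref{ExtProp}) is precisely the ``formal consequence'' the paper leaves unspelled, so there you coincide with the paper. For part \ref{ExtProp}, the paper argues globally: $W(\Phi)$ acts transitively on $A_3(\Phi)$ for $\Phi=\rE_\ell$ (cited from Dynkin), so it suffices to exhibit a single $\rA_3$ subdiagram of $\widetilde{D}(\Phi)$ contained in an $\rA_4$ subdiagram --- one orbit fact plus a glance at the diagram. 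You instead construct the extending root $\delta$ directly, by projecting $\alpha_3$ onto $\{\alpha_1,\alpha_2\}^\perp$ and using that $\Xi=\Phi\cap\{\alpha_1,\alpha_2\}^\perp$ spans that complement; the local steps are all sound (the projection is nonzero because $\Phi\cap\mathrm{span}(\alpha_1,\alpha_2)$ is exactly the $\rA_2$, simply-lacedness forces $\langle\delta,\alpha_3\rangle=\pm1$, and the resulting chain generates an $\rA_4$ containing $\Psi$). The trade-off is that you replace the paper's single $A_3$-orbit fact by two facts of the same nature: transitivity of $W(\Phi)$ on $A_2(\Phi)$ and the identification of the $\rA_2$-orthogonal complements. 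On the latter your citation is off: the paper's table lists $\Phi'$, the complement of the \emph{highest root} (an $\rA_1$-complement), namely $\rA_5$, $\rD_6$, $\rE_7$, and says nothing about $\rA_2$-complements; nor can one iterate the table, since the second root of the $\rA_2$ does not lie in $\Phi'$. What you actually need are the Borel--de Siebenthal subsystems obtained from $\widetilde{D}(\rE_\ell)$ by deleting the node with mark $3$, namely $\rA_2+\rA_2+\rA_2\subseteq\rE_6$, $\rA_2+\rA_5\subseteq\rE_7$, $\rA_2+\rE_6\subseteq\rE_8$, which exhibit one (hence, by transitivity on $A_2(\Phi)$, every) $\rA_2$ whose complement has full rank $\ell-2$. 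With that reference repaired your argument is complete; the paper's route is shorter and leans on less, while yours has the mild advantage of making the extension explicit and avoiding the $A_3$-transitivity input.
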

\begin{proof} For $\Phi$ of type $\rE_\ell$ the action of $W(\Phi)$ on $A_3(\Phi)$ is transitive (see discussion after~\cite[Theorem~5.4]{Dyn52}).
 To prove~\ref{ExtProp} it suffices to find on $\widetilde{D}(\Phi)$ a subdiagram of type $\rA_3$ which is contained in a subdiagram of type $\rA_4$.
 The second statement of the lemma is a formal consequence of the first statement and Lemma~\ref{a2pc}. \end{proof}        

\begin{rem} An analogue of the first statement of the previous lemma is false in the case $\Phi=\rD_\ell$ even for $\ell\geq 5$. 
The reason for this lies in the fact there are two orbits in $A_3(\rD_\ell)$ under the action of $W(\rD_\ell)$, $\ell\geq 5$.
In Tables 4--5 of~\cite{Ca72} these orbits are labeled as $\rA_3$ and $\rD_3$. 
A subsystem contained in the orbit $\rA_3$ can be embedded into a subsystem of type $\rA_4$, while this is not true for a subsystem lying in the orbit $\rD_3$. \end{rem}
 
\subsection{Steinberg groups} \begin{dfn}\label{SteinbergDef}
If $\rk(\Phi)\geq 2$ the {\it Steinberg group} $\St(\Phi, R)$ can be defined as the group given by generators $x_{\alpha}(\xi)$, $\xi\in R$, $\alpha\in\Phi$ and the following set of relations.
\begin{equation}\label{Radd} x_\alpha(\xi) x_\alpha(\eta) = x_\alpha(\xi+\eta),\ \alpha \in \Phi, \xi, \eta \in R;\end{equation} 
\begin{equation}\label{Rcf} [x_\alpha(\xi),  x_\beta(\eta)] = \prod\limits x_{i\alpha + j\beta}(N_{\alpha,\beta, i, j}\xi^i \eta^j),\quad \alpha\neq-\beta,\ \alpha,\beta\in \Phi,\ \xi, \eta \in R.\end{equation}
\end{dfn}
The indices $i$, $j$ appearing in the right-hand side of the above formula range over all positive natural numbers such that $i\alpha + j\beta\in\Phi$.
The integers $N_{\alpha, \beta, i, j}$ are called {\it structure constants} of the Chevalley group $\GG(\Phi,R)$ and depend only on $\Phi$.
More information on Chevalley groups and their structure constants can be found in~\cite[\S~9]{VP}.

Let $I\leq R$ be an additive subgroup and $\alpha\in \Phi$ be a root. Denote by $X_\alpha(I)$ the additive subgroup of $\St(\Phi, R)$ consisting of $x_\alpha(s)$, $s\in I$. 
We call such subgroups $X_\alpha(I)$ {\it root subgroups}.

Generators $x_\alpha(\xi)$ should be thought of as formal symbols modeling elementary root unipotents $t_\alpha(\xi)\in\E(\Phi, R)\leq \GG(\Phi, R)$,
while relations~\ref{Radd}--\ref{Rcf} are {\it elementary} relations between $t_\alpha(\xi)$ which hold over arbitary $R$.

Throughout this paper we will be mainly concerned with root systems whose Dynkin diagram is simply laced.
In this case formula~\ref{Rcf} further simplifies to
\begin{equation}\label{Rcf21} [x_\alpha(\xi),  x_\beta(\eta)] = 1,\ \alpha+\beta\notin\Phi\cup\{0\},\end{equation}
\begin{equation}\label{Rcf22} [x_\alpha(\xi),  x_\beta(\eta)] = x_{\alpha+\beta}(N_{\alpha\beta}\xi\eta),\ \alpha+\beta\in\Phi,\ \text{where}\ N_{\alpha\beta}=\pm 1. \end{equation}
The integers $N_{\alpha\beta}$ mentioned above are precisely the structure constants of the simple complex Lie algebra $L_\mathbb{C}(\Phi)$ of type $\Phi$ with respect to some positive Chevalley base $\{e_\alpha\}$ (see~\cite[\S~1]{V}, \cite[\S~2]{VP}),
i.e. $[e_\alpha, e_\beta]=N_{\alpha\beta}e_{\alpha+\beta}$, $\alpha,\beta\in\Phi$.

\begin{rem} For $\xi\in R^*$ set $w_\alpha(\xi) := x_\alpha(\xi) x_{-\alpha}(-\xi^{-1}) x_{\alpha}(\xi).$
If the rank of $\Phi$ is at least 2 then the following identity is a consequence of \ref{Radd}--\ref{Rcf}:
\begin{equation}\label{RW} w_\alpha(\xi) x_{\beta}(\zeta) w_\alpha(\xi)^{-1} =
x_{\sigma\ssub{\alpha}\beta} (\eta_{\alpha, \beta}\xi^{-\langle\beta,\alpha \rangle} \zeta),\ \zeta\in R,\ \xi\in R^*. \end{equation}
In the above formula the integers $\eta_{\alpha, \beta}$ are equal $\pm 1$ and can be deduced from the structure constants (see~\cite[\S~13]{VP}). \end{rem}
\begin{rem}
The definition~\ref{SteinbergDef} is not suitable in the case $\Phi=\rA_1$ since the set of relations of type~\ref{Rcf} becomes empty.
There exist different definitions of the Steinberg group of rank $1$.
For example, R.~Steinberg and M.~Stein defined $\St(\rA_1, R)$ as a group given by relations \ref{Radd}, \ref{RW} (cf. with~\cite[Definition~3.7]{St1}).
Some authors further enforce all the standard relations for elements $h_\alpha$, $w_\alpha$ (see e.g. relations 1--7 of~\cite{A}).
Throughout this article we stick to the definition of $\St(\rA_1, R)$ of M.~Stein and R.~Steinberg.
However, as we will see below, most of our statements do not depend on the choice of definition for $\St(\rA_1, R)$. \end{rem}

We now turn to functoriality properties of Steinberg groups.
Clearly, there exists a well-defined Steinberg group functor $\St(\Phi, -)\colon\catname{Rings} \rightarrow \catname{Grp}$.
For a map of rings $f\colon R\rightarrow S$ the induced map $\St(\Phi, f)$ sending each generator $x_\alpha(\xi)$ to $x_\alpha(f(\xi))$ will be denoted by $f^*$.

\begin{notation}\label{Znot} Let $I\trianglelefteq R$ be an ideal of $R$. For $s\in I$, $\xi\in R$ set $Z_\alpha(s, \xi) := x_\alpha(s)^{x_{-\alpha}(\xi)} \in \St(\Phi, R)$.
For notational convenience, in the sequel we continue to denote the elements of an ideal $I$ by Latin letters (e.g. $s, t \in I$) and elements of the ambient ring $R$ by Greek letters (e.g. $\xi, \eta \in R$).\end{notation}

\begin{lemma}\label{titsLemma} Let $\Phi$ be an irreducible root system of rank $\geq 2$. 
Let $G$ denote the normal closure in $\St(\Phi, R)$ of the subgroup spanned by elementary root unipotents of level $I$, i.\,e. $G=\langle x_\alpha(s) \mid \alpha\in\Phi,\ s\in I\rangle^{\St(\Phi, R)}$.
\begin{enumerate}
 \item \label{Tits1} The group $G$ is generated by elements $Z_{\alpha}(s,\xi)$, where $s\in I$, $\xi\in R$, $\alpha\in\Phi$.
 \item \label{Tits2} Let $S$ be a parabolic set of roots of $\Phi$ with special part $\Sigma_S$. Then $G$ is generated by the following two families of elements:  
       $x_{\alpha}(s)$, where $s\in I$, $\alpha\in\Phi$ and $Z_\alpha(s,\xi)$, where $s\in I$, $\xi\in R$, $\alpha\in\Sigma_S$.
\end{enumerate} \end{lemma}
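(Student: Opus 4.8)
The plan is to establish both statements by a direct generating-set computation inside $\St(\Phi, R)$, using the commutator relations~\ref{Rcf21}--\ref{Rcf22} together with the Weyl-conjugation formula~\ref{RW}. For part~\ref{Tits1}, first observe that the elements $Z_\alpha(s,\xi) = x_\alpha(s)^{x_{-\alpha}(\xi)}$ do lie in $G$: each is a conjugate of the generator $x_\alpha(s)$ of the defining subgroup by an element of $\St(\Phi, R)$. So it suffices to show that the subgroup $H := \langle Z_\alpha(s,\xi) \mid \alpha\in\Phi,\ s\in I,\ \xi\in R\rangle$ is already normal in $\St(\Phi, R)$ and contains all the $x_\alpha(s)$. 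Containment is immediate since $Z_\alpha(s, 0) = x_\alpha(s)$, so $G \subseteq H$ follows once $H$ is shown normal, and $H \subseteq G$ is clear; hence $G = H$. To prove normality of $H$ I would verify that $H$ is stable under conjugation by every generator $x_\beta(\eta)$, $\beta\in\Phi$, $\eta\in R$. This is the crux of part~\ref{Tits1}.

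For the normality verification I would split into cases according to the relative position of $\alpha$ and $\beta$. When $\beta = \pm\alpha$ the computation stays inside the rank-one subgroup generated by $x_{\pm\alpha}$, and one checks directly (using~\ref{Radd} and the definition of $Z$) that $Z_\alpha(s,\xi)^{x_\alpha(\eta)}$ and $Z_\alpha(s,\xi)^{x_{-\alpha}(\eta)}$ are again of the form $Z_\alpha(s', \xi')$ or products thereof. When $\beta \neq \pm\alpha$ one expands $x_\beta(\eta)\, Z_\alpha(s,\xi)\, x_\beta(\eta)^{-1}$ by commuting $x_\beta(\eta)$ past $x_\alpha(s)$ and $x_{-\alpha}(\xi)$ using the simply-laced relations~\ref{Rcf21}--\ref{Rcf22}; every commutator that appears is supported on roots of the form $i\alpha + j\beta$, and each resulting factor $x_\gamma(\text{entry})$ with entry in $I$ can be rewritten as some $Z_\gamma(s', \xi')$ lying in $H$. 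The key point is that whenever a new root $\gamma$ is produced, its coefficient entry is divisible by $s\in I$, so the factor lands in the correct level. Here the hypothesis $\rk(\Phi)\ge 2$ guarantees enough room for relation~\ref{RW} to hold and for the requisite auxiliary roots to exist.

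For part~\ref{Tits2}, I would use the parabolic decomposition $S = \Sigma_S \sqcup \Delta_S$ with $\Delta_S = -\Delta_S$ reductive and $\Phi = S \cup -S$. The generators $x_\alpha(s)$ for all $\alpha\in\Phi$ together with $Z_\alpha(s,\xi)$ for $\alpha\in\Sigma_S$ clearly lie in $G$. For the reverse, by part~\ref{Tits1} it suffices to express an arbitrary $Z_\alpha(s,\xi)$ with $\alpha\in\Phi$ in terms of the two proposed families. If $-\alpha\in\Delta_S\cup\Sigma_S$ lies in $S$, I would rewrite $x_{-\alpha}(\xi)$ as a word in root generators $x_\gamma(\cdot)$ indexed by roots in $S$ and absorb them one at a time, using the relations to move the $x_\gamma$ past $x_\alpha(s)$ at the cost of commutator terms already of the required shape. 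The decisive observation is that $-\alpha \in -S = -\Sigma_S \sqcup \Delta_S$, so either $-\alpha\in\Delta_S$ (reductive, handled by the $w$-conjugation~\ref{RW} within a rank-one subsystem) or $\alpha\in\Sigma_S$ (already in the allotted family).

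The step I expect to be the main obstacle is the normality verification in part~\ref{Tits1}, specifically the case $\beta\neq\pm\alpha$ when the pair $(\alpha,\beta)$ generates a rank-two subsystem and the commutator $[x_\beta(\eta), Z_\alpha(s,\xi)]$ produces several factors simultaneously. One must carefully track the structure constants $N_{\alpha\beta}=\pm1$ and confirm that \emph{every} factor arising from commuting $x_\beta(\eta)$ through $x_{-\alpha}(\xi)$ — including the ``cross terms'' involving the new root $\beta-\alpha$ or $\beta+\alpha$ — can be reassembled into elements of the form $Z_\gamma(s',\xi')$ rather than bare generators $x_\gamma$ with entries outside $I$. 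Organizing this bookkeeping cleanly, rather than the individual relations themselves, is where the real work lies.
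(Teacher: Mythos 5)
Your skeleton for part~\ref{Tits1} --- show that $H := \langle Z_\alpha(s,\xi)\rangle$ contains all $x_\alpha(s)=Z_\alpha(s,0)$ and is normalized by every generator $x_\beta(\eta)$, whence $G=H$ --- is the right one, and it is essentially what the proofs cited by the paper do for $\E(\Phi,R,I)$ before being transferred to $\St(\Phi,R)$. But you have misidentified where the difficulty sits, and the case you dismiss as easy is handled incorrectly. The case $\beta\neq\pm\alpha$, which you single out as the main obstacle, is in fact the routine one: every correction term produced by the Chevalley commutator formula carries an entry divisible by $s\in I$ and sits on a root of the ambient rank-two subsystem distinct from $\pm\alpha$, so each such factor is a bare level-$I$ generator $x_\gamma(t)=Z_\gamma(t,0)$; this is exactly what relations~\eqref{Z2}--\eqref{Z4} of Lemma~\ref{Zrels} record. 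The genuinely hard case is $\beta=\alpha$, which you claim ``stays inside the rank-one subgroup generated by $x_{\pm\alpha}$'' and can be checked using~\eqref{Radd} and the definition of $Z$. That claim is false. Relations~\eqref{Radd} impose no interaction between $X_\alpha(R)$ and $X_{-\alpha}(R)$, so the verification you propose takes place in the free product $X_\alpha(R)*X_{-\alpha}(R)$. There, the subgroup generated by the elements $Z_\alpha(s',\xi')$ lies in $X_\alpha(I)*X_{-\alpha}(R)$, whose elements have reduced words all of whose $X_\alpha$-letters carry entries in $I$; on the other hand, for $\eta\notin I$ the element $Z_\alpha(s,\xi)^{x_\alpha(\eta)}=x_\alpha(-\eta)\,x_{-\alpha}(-\xi)\,x_\alpha(s)\,x_{-\alpha}(\xi)\,x_\alpha(\eta)$ is already reduced and begins with an $X_\alpha$-letter whose entry is not in $I$. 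So no rank-one computation can place it in $H$: one must leave the rank-one subgroup, e.g.\ by writing $x_\alpha(s)$ as a commutator $[x_\gamma(\pm s),x_\delta(1)]$ for roots $\gamma+\delta=\alpha$ and reducing to the cases already settled. This is precisely where the hypothesis $\rk(\Phi)\geq 2$ enters, and it is the idea your proposal is missing.

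Part~\ref{Tits2} has analogous gaps: your case analysis (``either $-\alpha\in\Delta_S$ or $\alpha\in\Sigma_S$'') omits the roots $\alpha\in-\Sigma_S$ altogether, and for $\alpha\in\Delta_S$ the appeal to relation~\eqref{RW} is illegitimate over a general ring, since $w_\alpha(\xi)$ requires $\xi\in R^*$ and, in any case, rewriting $Z_\alpha(s,\xi)$ for $\alpha\notin\Sigma_S$ in terms of the allowed generators is exactly the nontrivial content (the paper calls this statement ``stronger'' for a reason). Note also that the lemma is stated for an arbitrary irreducible $\Phi$ of rank $\geq 2$, whereas you argue only with the simply-laced relations~\eqref{Rcf21}--\eqref{Rcf22}; the doubly-laced cases require the full formula~\eqref{Rcf}. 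For comparison, the paper proves none of this from scratch: it observes that both statements are known for the elementary group $\E(\Phi,R,I)$ (see \cite{Va}, \cite{S}) and that those proofs use only the Steinberg relations, hence carry over verbatim to $\St(\Phi,R)$.
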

\begin{proof} 
If one replaces $x_\alpha(s)$ by $t_\alpha(s)$ and $\St(\Phi, R)$ by $\E(\Phi, R)$ the first statement of the lemma becomes well-known.
Indeed, the group $\E(\Phi, R, I):=\langle t_\alpha(s) \mid \alpha\in\Phi,\ s\in I\rangle^{\E(\Phi, R)}$ is generated by $t_{\alpha}(s)^{t_{-\alpha}(\xi)}$, $s\in I$, $\xi\in R$ as proved, for example, in~\cite[Theorem~2]{Va}, ~\cite[Proposition~3.2]{S}.
Both these proofs rely solely on calculations with Steinberg relations~\ref{Radd}--\ref{Rcf} and hence can be reproduced verbatim for Steinberg groups. 
The stronger second statement is an analogue of~\cite[Theorem~3.4]{S}. \end{proof}
\begin{lemma}\label{titsLemma2} 
Let $\Phi$ be an irreducible root system of rank $\geq 2$ and let $\pi\colon R\rightarrow R/I$ denote the canonical projection.
\begin{enumerate} 
 \item The group $G$ coincides with $\Ker(\pi^*\colon \St(\Phi, R) \rightarrow \St(\Phi, R/I))$.
       In particular $\Ker(\pi^*)$ admits either of the two generating sets described in Lemma~\ref{titsLemma}.
 \item If $\pi$ admits a section $f \colon R/I\rightarrow R$ then $G$ is generated by elements of the form $Z_\alpha(s, \xi)$, $\xi \in f(R/I)$, $s\in I$, $\alpha \in \Phi$.
\end{enumerate}
\end{lemma}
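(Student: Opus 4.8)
The plan is to prove the two statements in turn, deriving the first from a presentation (von Dyck) argument and the second from a one-line conjugation computation resting on Lemma~\ref{titsLemma}.

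\emph{Part 1.} First I would record the easy inclusion $G \subseteq \Ker(\pi^*)$: each generator $x_\alpha(s)$ of the normal subgroup $G$ satisfies $\pi^*(x_\alpha(s)) = x_\alpha(\pi(s)) = x_\alpha(0) = 1$ (recall $x_\alpha(0)=1$ by~\ref{Radd}), and $\Ker(\pi^*)$ is normal, so it contains the normal closure $G$. Hence $\pi^*$ factors through a homomorphism $\bar\pi\colon \St(\Phi, R)/G \to \St(\Phi, R/I)$, which is surjective because $\pi$ is. It then remains to show $\bar\pi$ is injective, and for this I would construct a left inverse. Choosing for each $\bar\xi\in R/I$ a set-theoretic lift $\xi\in R$ along $\pi$, I define $\psi\colon \St(\Phi, R/I)\to \St(\Phi, R)/G$ on generators by $x_\alpha(\bar\xi)\mapsto \overline{x_\alpha(\xi)}$.

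The step I expect to require the most care is the proof that $\psi$ is a well-defined group homomorphism. Independence of the chosen lift is immediate: if $\xi,\xi'$ both reduce to $\bar\xi$ then $\xi-\xi'\in I$, so $x_\alpha(\xi)=x_\alpha(\xi')x_\alpha(\xi-\xi')$ by~\ref{Radd} with the second factor in $G$, whence $\overline{x_\alpha(\xi)}=\overline{x_\alpha(\xi')}$. To check that the defining relations~\ref{Radd}--\ref{Rcf} of $\St(\Phi, R/I)$ are respected, I would use the fact that the structure constants $N_{\alpha,\beta,i,j}$ depend only on $\Phi$: each such relation over $R/I$ lifts verbatim to the corresponding relation over $R$ once compatible lifts are chosen for all arguments (lifting $N_{\alpha,\beta,i,j}\bar\xi^i\bar\eta^j$ by $N_{\alpha,\beta,i,j}\xi^i\eta^j$, with independence of lift guaranteeing this does not matter), and that relation already holds in $\St(\Phi, R)$, hence in the quotient. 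By von Dyck's theorem $\psi$ is therefore a homomorphism. Finally, on a generator one has $\psi(\bar\pi(\overline{x_\alpha(\xi)}))=\psi(x_\alpha(\pi(\xi)))=\overline{x_\alpha(\xi)}$, so $\psi\circ\bar\pi=\mathrm{id}$; thus $\bar\pi$ is injective, hence an isomorphism, and $\Ker(\pi^*)=G$. The concluding clause of the statement is then immediate from Lemma~\ref{titsLemma}.

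\emph{Part 2.} Let $H$ be the subgroup generated by the elements $Z_\alpha(s,\eta)$ with $\alpha\in\Phi$, $s\in I$, $\eta\in f(R/I)$; clearly $H\subseteq G$. For the reverse inclusion I would invoke part~\ref{Tits1} of Lemma~\ref{titsLemma} and show that each generator $Z_\alpha(s,\xi)$, $s\in I$, $\xi\in R$, of $G$ lies in $H$. Setting $\eta:=f(\pi(\xi))\in f(R/I)$ and $t:=\xi-\eta$, we have $\pi(t)=\pi(\xi)-\pi(\xi)=0$ since $\pi\circ f=\mathrm{id}$, so $t\in I$. Expanding $x_{-\alpha}(\xi)=x_{-\alpha}(\eta)x_{-\alpha}(t)$ via~\ref{Radd} inside Notation~\ref{Znot} (ordering the summands to match the conjugation convention) yields
\[ Z_\alpha(s,\xi) = Z_\alpha(s,\eta)^{\,x_{-\alpha}(t)}. \]
Here $Z_\alpha(s,\eta)$ is a generator of $H$, while $x_{-\alpha}(t)=Z_{-\alpha}(t,0)\in H$ because $0=f(\bar 0)\in f(R/I)$ and $t\in I$. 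Since $H$ is a subgroup, the conjugate on the right lies in $H$; therefore $G\subseteq H$ and $G=H$, as required. This last part is a routine manipulation, so the whole weight of the lemma rests on the relation-lifting verification in Part 1.
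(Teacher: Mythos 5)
Your proposal is correct and takes essentially the same route as the paper: Part 1 is the standard lift-and-apply-von-Dyck argument that the paper outsources to its references (Milnor's Lemma~6.1 and Stein), and your Part 2 identity $Z_\alpha(s,\xi)=Z_\alpha(s,f\pi(\xi))^{x_{-\alpha}(t)}$ is precisely the paper's factorization $Z_\alpha(s,\xi)=Z_{-\alpha}(-s',0)\cdot Z_\alpha(s,f\pi(\xi))\cdot Z_{-\alpha}(s',0)$ with $t=s'=\xi-f\pi(\xi)$, since $Z_{-\alpha}(\pm s',0)=x_{-\alpha}(\pm s')$. Both arguments (yours and the paper's) implicitly use that $f$ is a ring-theoretic section, so that $0=f(\bar0)\in f(R/I)$.
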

\begin{proof}
 The first statement can be proved similarly to~\cite[Lemma~6.1]{M71}, cf. also \cite[(3.12)]{St1}.
 
 Let $g$ be an element of $G$. By the first part of Lemma~\ref{titsLemma} we can present $G$ as a finite product of generators of the form $Z_\alpha(s, \xi)$, $s \in I$, $\xi \in R$.
 Now every such generator can be factored as $Z_\alpha(s, \xi) = Z_{-\alpha}(-s', 0) \cdot Z_\alpha(s, f\pi(\xi)) \cdot Z_{-\alpha}(s', 0),$ where $s' = \xi - f\pi(\xi) \in I.$
\end{proof}

Let $\Phi$ be a simply laced root system. Denote by $\catname{Subsys}(\Phi)$ the category of root subsystems of $\Phi$.
Its objects are all root subsystems $\Psi\subseteq \Phi$ and its morphisms are in one-to-one correspondence with inclusions of the form $\Psi_1\subseteq \Psi_2$.

\begin{lemma}\label{funct} For a given simply laced root system $\Phi$ we can define the Steinberg group functorially with respect to taking subsystems of $\Phi$, i.\,e. we can define functor
$$\St(-, -)\colon \catname{Subsys}(\Phi)\times \catname{Rings} \rightarrow\catname{Grp}.$$ \end{lemma}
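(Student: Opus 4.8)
The plan is to realize the functor as the identity on root–subgroup generators, so that the only real content is to check that the defining relations of $\St(\Psi_1, R)$ remain valid in $\St(\Psi_2, R)$ whenever $\Psi_1\subseteq\Psi_2$. First I would fix, once and for all, a positive Chevalley basis $\{e_\alpha\}$ of the ambient Lie algebra $L_\mathbb{C}(\Phi)$; this pins down all the structure constants $N_{\alpha\beta}$ globally. For a subsystem $\Psi\subseteq\Phi$ the subspace spanned by the $e_\alpha$ with $\alpha\in\Psi$, together with the corresponding coroots, is a semisimple subalgebra of type $\Psi$, and the restricted family $\{e_\alpha : \alpha\in\Psi\}$ is a Chevalley basis of it. Consequently the constant attached to a pair $\alpha,\beta\in\Psi$ with $\alpha+\beta\in\Phi$ is the same whether it is computed inside $\Psi$ or inside $\Phi$. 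I would then \emph{define} every $\St(\Psi, R)$ using precisely these restricted constants, so that all the Steinberg groups in sight share one coherent system of signs.

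With this convention in place, for an inclusion $\Psi_1\subseteq\Psi_2$ I would prescribe the homomorphism $\St(\Psi_1, R)\to\St(\Psi_2, R)$ on generators by $x_\alpha(\xi)\mapsto x_\alpha(\xi)$, $\alpha\in\Psi_1$, and verify that it respects the relations~\ref{Radd}, \ref{Rcf21}, \ref{Rcf22}. The additive relations~\ref{Radd} transport verbatim. For the commutator relations the single point to check is that sums behave correctly: if $\alpha,\beta\in\Psi_1$ and $\alpha+\beta\in\Phi$, then in the simply laced setting $\langle\beta,\alpha\rangle=-1$, so $\alpha+\beta=\sigma_\alpha(\beta)\in\Psi_1$ because $\Psi_1$ is stable under its own reflections; hence $x_{\alpha+\beta}(\cdot)$ already lives in $\St(\Psi_1, R)$ and, by the choice of basis, carries the same sign in both groups. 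Thus~\ref{Rcf22} maps to~\ref{Rcf22}, while~\ref{Rcf21} maps to~\ref{Rcf21}, since the condition $\alpha+\beta\notin\Phi\cup\{0\}$ is detected already in $\Phi$.

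Functoriality is then formal: as every arrow acts as the identity on the symbols $x_\alpha(\xi)$, a composite of inclusions induces the composite of homomorphisms and identity inclusions go to identity maps, so we obtain a functor out of $\catname{Subsys}(\Phi)$; compatibility with the ring variable is immediate, since a ring homomorphism $R\to S$ alters only the coefficient $\xi$, whence the subsystem maps and the ring maps commute and assemble into the asserted bifunctor. The only objects requiring a separate word are the low–rank subsystems: a rank-$0$ subsystem yields the trivial group, while for a subsystem $\Psi_1$ of type $\rA_1$ one uses the Stein–Steinberg presentation by~\ref{Radd}, \ref{RW}, and the map into $\St(\Psi_2, R)$ is well defined because~\ref{RW} is a consequence of~\ref{Radd}--\ref{Rcf} whenever $\rk\Psi_2\geq 2$ (and $\rk\Psi_2=1$ forces $\Psi_1=\Psi_2$).

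I expect the main—indeed essentially the only—obstacle to be the coherence of the structure constants across all subsystems at once; fixing a single ambient Chevalley basis resolves it, after which the verification of relations and of functoriality is pure bookkeeping.
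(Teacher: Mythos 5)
Your proposal is correct and follows essentially the same route as the paper: fix the structure constants (signs) once for the ambient $\Phi$, restrict them to each subsystem, and define every arrow as the identity on generators $x_\alpha(\xi)\mapsto x_\alpha(\xi)$. Your extra verifications — that a subsystem is closed under sums of roots (via $\alpha+\beta=\sigma_\alpha(\beta)$ in the simply laced case), and that the $\rA_1$ case works because relation~\ref{RW} follows from \ref{Radd}--\ref{Rcf} in rank $\geq 2$ — are details the paper leaves implicit or defers to the remark before Lemma~\ref{absPres}, and they are accurate.
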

\begin{proof}
Fix signs $N_{-,-}\colon \Phi^2\rightarrow\{\pm 1\}$ of the structure constants for $\Phi$.
Now, one can obtain structure constants for any subsystem $\Psi\subseteq \Phi$ by restricting $N_{-,-}$ on $\Psi^2\subseteq\Phi^2$.
Now for any embedding $i\colon \Psi_1 \hookrightarrow \Psi_2$ of root subsystems of $\Phi$ the corresponding morphism $\St(i, R)$ can be defined by the obvious formula:
$\St(i, R)(x_\alpha(\xi))=x_\alpha(\xi)$, $\xi \in R$, $\alpha\in \Psi_1$. \end{proof}

\begin{dfn}\label{Gdef}
Denote by $\mathcal{G}_{1,n}$ the full subcategory of $\catname{Subsys}(\Phi)$ with $\mathrm{Ob}(\mathcal{G}_{1,n}) := A_1(\Phi)\sqcup A_n(\Phi)$. 
Clearly, $\mathcal{G}_{1,n}$ is a directed bipartite graph whose only nonidentical morphisms are inclusions of the form $i_{\gamma, \Psi}\colon \langle \{ \gamma \} \rangle \hookrightarrow \Psi$, $\gamma\in\Psi$, $\Psi\in A_n(\Phi)$. \end{dfn}

Recall the definition of amalgamated products of groups (which are pushouts in the category of abstract groups).
Given maps $\varphi\colon F\rightarrow G$ and $\psi\colon F \rightarrow H$ the amalgamated product $G *_{F} H$ is defined as the quotient of $G * H$ modulo the smallest normal subgroup generated by $\{ \psi(g) \varphi(g)^{-1} \mid g\in F \}$.
Notice that for a surjective map of groups $\pi\colon\widetilde{F}\rightarrow F$ the pushout $G *_{\widetilde{F}} H$ taken with respect to $\varphi\pi$ and $\psi\pi$ will be isomorphic to $G *_F H$.

Although the next lemma will not be used directly in the sequel, it illustrates an important idea that Steinberg groups can be presented as amalgamated products of Steinberg groups of smaller rank.
\begin{lemma}\label{absPres} Let $\Phi$ be a simply laced irreducible root system of rank $\geq 3$.
Then $\St(\Phi, R)$ is isomorphic to $G := \varinjlim_{\mathcal{G}_{1,3}}\St(-,R).$ \end{lemma}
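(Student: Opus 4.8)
The plan is to exhibit the canonical comparison map $\theta\colon G\to\St(\Phi, R)$ and prove it is an isomorphism by constructing an explicit inverse. Since $\St(-, R)$ is a functor on $\catname{Subsys}(\Phi)$ by Lemma~\ref{funct}, the embeddings $\St(i_\Psi, R)\colon\St(\Psi, R)\to\St(\Phi, R)$ induced by the inclusions $i_\Psi\colon\Psi\hookrightarrow\Phi$, $\Psi\in A_1(\Phi)\sqcup A_3(\Phi)$, form a cocone under the diagram defining $G$; the universal property of the colimit yields a unique homomorphism $\theta\colon G\to\St(\Phi, R)$ satisfying $\theta\circ\iota_\Psi=\St(i_\Psi, R)$, where $\iota_\Psi\colon\St(\Psi, R)\to G$ denotes the canonical structure map. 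In particular $\theta\bigl(\iota_{\langle\{\alpha\}\rangle}(x_\alpha(\xi))\bigr)=x_\alpha(\xi)$ for every root $\alpha$ and every $\xi\in R$.

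Next I would define the candidate inverse $\psi\colon\St(\Phi, R)\to G$ on generators by $\psi(x_\alpha(\xi)):=\iota_{\langle\{\alpha\}\rangle}(x_\alpha(\xi))$ and verify that it respects the relations of Definition~\ref{SteinbergDef}. I expect the well-definedness of $\psi$, specifically the commutator relations, to be the main obstacle, and the key geometric input is Lemma~\ref{rpLemma}. The additive relation~\ref{Radd} is immediate, since $x_\alpha(\xi)$, $x_\alpha(\eta)$, $x_\alpha(\xi+\eta)$ all lie in the single group $\St(\langle\{\alpha\}\rangle, R)$ and $\iota_{\langle\{\alpha\}\rangle}$ is a homomorphism. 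For the commutator relations~\ref{Rcf21}--\ref{Rcf22} with $\alpha\neq\pm\beta$, I choose by Lemma~\ref{rpLemma} a subsystem $\Psi\in A_3(\Phi)$ containing both $\alpha$ and $\beta$; if moreover $\alpha+\beta\in\Phi$ then $\alpha+\beta\in\langle\{\alpha,\beta\}\rangle\subseteq\Psi$ as well. Because the structure constants of $\Psi$ are by Lemma~\ref{funct} the restrictions of those fixed for $\Phi$, the relevant relation holds verbatim inside $\St(\Psi, R)$; applying $\iota_\Psi$ and using the colimit identity $\iota_\Psi\circ\St(i_{\gamma,\Psi}, R)=\iota_{\langle\{\gamma\}\rangle}$ (so that $\iota_\Psi(x_\gamma(\zeta))=\psi(x_\gamma(\zeta))$ for each $\gamma\in\Psi$) transports it to the required identity between the $\psi(x_\gamma(\zeta))$. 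The sole remaining case $\alpha=\beta$ of~\ref{Rcf21} follows from~\ref{Radd}. Hence $\psi$ is a well-defined homomorphism.

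Finally I would check that $\theta$ and $\psi$ are mutually inverse. The composite $\theta\circ\psi$ fixes every generator $x_\alpha(\xi)$ of $\St(\Phi, R)$ by the formula for $\theta$ above, so $\theta\psi=\mathrm{id}$. For $\psi\circ\theta$, observe first that $G$ is generated by the elements $\iota_\Psi(x_\alpha(\xi))$ with $\Psi\in A_3(\Phi)$, $\alpha\in\Psi$, $\xi\in R$: each $\St(\Psi, R)$ is generated by its root elements, and since $\rk\Phi\geq 3$ every root $\gamma$ lies in some $\Psi\in A_3(\Phi)$ by Lemma~\ref{rpLemma}, so the identity $\iota_{\langle\{\gamma\}\rangle}=\iota_\Psi\circ\St(i_{\gamma,\Psi}, R)$ absorbs the images of the $\rA_1$-pieces into those of the $\rA_3$-pieces. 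On such a generator one computes $\psi\theta\bigl(\iota_\Psi(x_\alpha(\xi))\bigr)=\psi(x_\alpha(\xi))=\iota_{\langle\{\alpha\}\rangle}(x_\alpha(\xi))=\iota_\Psi(x_\alpha(\xi))$, the last equality being again the colimit identity. Thus $\psi\theta=\mathrm{id}_G$, and $\theta$ is the desired isomorphism.
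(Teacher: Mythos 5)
Your proposal is correct and takes essentially the same approach as the paper: both arguments compare the two presentations, with Lemma~\ref{rpLemma} and the simply laced form of relations~\ref{Rcf21}--\ref{Rcf22} supplying the key fact that every defining relation of $\St(\Phi, R)$ already occurs inside some $\rA_3$-factor, while the relations of $G$ visibly hold in $\St(\Phi, R)$. Your write-up simply makes explicit, via the universal property of the colimit, the pair of mutually inverse homomorphisms that the paper's generators-and-relations comparison leaves implicit.
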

\begin{proof}
Clearly, $G$ can be interpreted as the free product of $|A_3(\Phi)|$ copies of $\St(\rA_3, R)$ modulo relations which identify the images of 
all generators $x_{\pm\alpha}(\xi)$, $\alpha\in \Phi$ with respect to all maps $\St(i_{\alpha,\Psi}, R)$, $\Psi \in A_3(\Phi, \alpha)$.
Notice that the last assertion does not depend on the exact definition of $\St(\rA_1, R)$ 
(in fact, we only need to know that our definition of $\St(\rA_1, R)$ imposes few enough relations between $x_\alpha(\xi)$ so that the maps $\St(i_{\alpha, \Psi}, R)$ are well-defined
which is the case for both definitions mentioned in Remark~\ref{SteinbergDef}). 

The above argument shows that $G$ and $\St(\Phi, R)$ have the same set of generators. It is clear that the defining relations of $G$ hold in $\St(\Phi, R)$. 
It suffices to show the converse, namely that every defining relation of $\St(\Phi, R)$ occurs in the set of defining relations of $\St(\Psi, R)$ for some $\Psi\in A_3(\Phi)$.
But this is a consequence of the form of relations~\ref{Rcf21}--\ref{Rcf22} and Lemma~\ref{rpLemma}. \end{proof}

\begin{rem} A much stronger variant of Lemma~\ref{absPres} called {\it Curtis---Tits presentation} is known for Steinberg groups (see~\cite[Corollary~1.3]{A}).
This presentation is formulated in terms of subgroups corresponding to vertices and edges of $D(\Phi)$ (rather than subgroups corresponding to subsystems of $\Phi$).
Moreover, it remains valid in the broader context of Kac--Moody groups, i.e. it holds for Steinberg-like groups obtained from generalized Cartan matrices. \end{rem}

\subsection{Van der Kallen's ``another presentation'' of the linear Steinberg group}\label{vdkP}
Recall that a column $v=(v_1,\ldots,v_n)^t\in R^{n}$ is called {\it unimodular} if $R v_1+\ldots+ R v_n=R.$
More generally, a matrix $v \in M_{n\times m}(R)$ is called unimodular if there exists $u\in M_{m\times n}(R)$ such that $uv$ is the identity matrix of size $m$.
We denote the set of unimodular columns by $\Um(n, R)$.

Let $U$ denote the set consisting of pairs $(v, u)$, where $v$ is a unimodular column of height $n$ and $u$ is a row of length $n$ such that $uv=0$. 
Notice that for any $(v, u)\in I$ the matrix $T_{v, u} = e + vu$ is invertible and elementary with inverse $e - vu$. 
Such matrices are called (linear) {\it transvections}.

Let $n\geq 4$. Consider the group $\St(n, R)$ defined by generators $X_{v,u}$ where $(v, u)\in U$ and relations
\begin{equation}\label{AP1} X_{v,u_1+u_2} = X_{v,u_1} \cdot X_{v, u_2}; \end{equation}
\begin{equation}\label{AP2} {X_{v,u}}^{X_{v',u'}} = X_{(e - v'u')v, u (e + v'u')}.\end{equation}
The following relation follows from~\ref{AP1}--\ref{AP2} (see discussion preceding Lemma~1.1 of~\cite{T}):
\begin{equation}\label{AP3} X_{v_1+bv_2,u} = X_{v_1,u}\cdot X_{v_2,bu},\end{equation}
if $b\in R$, $(v_1,v_2)\in\M_{n\times2}(R)$ is a unimodular matrix and $u\in {}^n\!R$ is a row such that $uv_1 = uv_2 = 0$.

Choose any basis $\{e_i\}$, $1\leq i \leq \ell+1$ of the free module of columns $R^{\ell+1}$. 
For a column $v \in R^{\ell+1}$ denote by $v^t\in{}^{\ell+1}\! R$ its transpose.
The following theorem is the main result of~\cite{Ka}.
\begin{thm}\label{vdkTheorem} For $l\geq 3$ the map sending each generator $x_{ij}(\xi)$ to $X_{e_i, \xi e_j^t}$, $1\leq i \neq j\leq \ell+1$ defines an isomorphism of groups $\St(\rA_\ell, R)$ and $\St(\ell+1, R)$. \end{thm}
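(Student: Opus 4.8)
The plan is to show that the map $\theta\colon\St(\rA_\ell, R)\to\St(\ell+1, R)$, $x_{ij}(\xi)\mapsto X_{e_i,\xi e_j^t}$, is an isomorphism in three steps: verifying that $\theta$ is a well-defined homomorphism, proving that it is surjective, and constructing a one-sided inverse to force injectivity. For well-definedness I would check that the defining relations of $\St(\rA_\ell, R)$ are sent to identities. The additivity relation \ref{Radd} is immediate from \ref{AP1}, since $X_{e_i,\xi e_j^t}\cdot X_{e_i,\eta e_j^t}=X_{e_i,(\xi+\eta)e_j^t}$. For the commutator relations \ref{Rcf21}--\ref{Rcf22} I would conjugate $X_{e_i,\xi e_j^t}$ by $X_{e_k,\eta e_l^t}$ using \ref{AP2}: the pair $(e_i,\xi e_j^t)$ is replaced by $\bigl(e_i-\eta\delta_{li}e_k,\ \xi e_j^t+\xi\eta\delta_{jk}e_l^t\bigr)$, and a short computation (using \ref{AP1} to collapse the resulting symbol with the same column) then recovers the Chevalley commutator formula with structure constants $N_{\alpha\beta}=\pm1$. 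These verifications are routine.

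For surjectivity it suffices to show that every generator $X_{v,u}$ is a product of the elementary symbols $X_{e_i,\xi e_j^t}$. Since $v$ is unimodular, I would argue by descending induction on the number of nonzero coordinates of $v$, at each step using relation \ref{AP3} to peel off one coordinate direction and \ref{AP1} to split the associated row into elementary pieces; the unimodularity of $v$ is precisely what makes the two-column splittings in \ref{AP3} applicable. This expresses an arbitrary $X_{v,u}$ as a word in the elementary symbols, so $\theta$ is surjective. Note that one cannot shortcut this by moving $v$ to $e_1$ via conjugations, since $\E(\ell+1,R)$ need not act transitively on unimodular columns over a general ring; the factorization must instead proceed through the additive relations \ref{AP1} and \ref{AP3}.

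The crux is injectivity, which I would obtain by constructing a homomorphism $\psi\colon\St(\ell+1, R)\to\St(\rA_\ell, R)$ with $\psi\circ\theta=\mathrm{id}$, defined on generators by $X_{v,u}\mapsto T_{v,u}$, where $T_{v,u}\in\St(\rA_\ell, R)$ is a canonical element with $\phi(T_{v,u})=e+vu$. The main obstacle is twofold. First, $T_{v,u}$ must be defined independently of auxiliary choices, and in particular \emph{without} lifting an arbitrary elementary matrix to $\St(\rA_\ell, R)$ (which would beg the very centrality question being studied): the naive candidate $\prod_{i,j}x_{ij}(v_iu_j)$ is not well-defined, because reordering its factors introduces correction terms whenever the supports of $v$ and $u$ overlap, so one needs a careful construction built from the unimodularity of $v$ together with \ref{AP3}-type identities. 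Second, and most delicately, the assignment $X_{v,u}\mapsto T_{v,u}$ must respect the defining relations \ref{AP1} and \ref{AP2}; verifying the conjugation relation \ref{AP2} amounts to checking that conjugating one canonical transvection element by another inside $\St(\rA_\ell, R)$ reproduces exactly the transvection attached to the transformed data $\bigl((e-v'u')v,\ u(e+v'u')\bigr)$. It is here that the hypothesis $\ell\geq3$ (equivalently $n\geq4$) is essential, as it supplies enough spare coordinates to relate any two elementary factorizations of a given transvection by Steinberg relations. Granting that $\psi$ is well-defined, the identity $\psi\theta=\mathrm{id}$ holds on generators by construction, and together with the surjectivity established above this shows that $\theta$ is an isomorphism.
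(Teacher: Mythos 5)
First, a point of comparison: the paper offers no argument for this theorem at all --- its ``proof'' is the citation to \cite[Theorem~1]{Ka} --- so your attempt must be judged on what such a proof actually requires, and there it has a genuine gap, located in the surjectivity step. You propose a descending induction that peels coordinates off the column $v$ using relation~\ref{AP3}, asserting that ``the unimodularity of $v$ is precisely what makes the two-column splittings in~\ref{AP3} applicable.'' That assertion is false. Relation~\ref{AP3} requires the $n\times 2$ matrix $(v_1,v_2)$ to be unimodular, i.e.\ to admit a left inverse, and peeling a coordinate off a unimodular column need not produce such data. Take $R=\mathbb{Z}$, $n=4$, $v=(2,0,0,3)^t$, which is unimodular since $(-1)\cdot 2+1\cdot 3=1$. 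Any splitting $v=v_1+b\,e_4$ has $v_1=(2,0,0,*)^t$; a left inverse of the matrix $(v_1,e_4)$ would force $(2,0,0)^t$ to be unimodular over $\mathbb{Z}$, which it is not, and $X_{v_1,u}$ is not even a legal generator because $v_1\notin\Um(4,\mathbb{Z})$. So the induction breaks at its first step, and no choice of which coordinate to peel repairs it. The correct mechanism --- the one underlying \cite{Ka} and Suslin's lemma on transvections --- splits the \emph{row} $u$ rather than the column $v$: choosing $w$ with $wv=1$, one has the Koszul identity $u=\sum_{i,j}u_iw_j\,(v_je_i^t-v_ie_j^t)$, in which every summand annihilates $v$; relation~\ref{AP1} then reduces $X_{v,u}$ to symbols of the form $X_{v,\,c(v_je_i^t-v_ie_j^t)}$, and these are expressed through elementary symbols by an explicit Cohn--Suslin type factorization carried out inside the relations~\ref{AP1}--\ref{AP3}, which is where the spare coordinates afforded by $n\geq 4$ are used.

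Second, your injectivity step is not wrong but it is empty: after correctly identifying the two obstacles (a choice-free definition of the lifts $T_{v,u}$ in $\St(\rA_\ell,R)$, and the verification that these lifts satisfy relations~\ref{AP1}--\ref{AP2}), you ``grant'' both. Those verifications are not a technical footnote; they constitute essentially the whole content of van der Kallen's paper, and nothing in your outline indicates how they would go. As it stands, the proposal establishes only the routine fact that $\theta$ is a well-defined homomorphism; the surjectivity argument fails as stated, and the injectivity argument defers to exactly the part of \cite{Ka} that the theorem is quoting.
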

\begin{proof} See~\cite[Theorem~1]{Ka}. \end{proof}

\begin{rem} Below we continue to denote by $\St(n, -)$ linear Steinberg groups in ``another presentation''.
At the same time we retain notation $\St(\rA_\ell, -)$ for groups in the ``usual presentation''. \end{rem}

There is some degree of freedom in the choice of the set of generators for $\St(n, R)$.
For example, we could replace the requirement $v\in \Um(n, R)$ in the definition of $U$ by $v\in \E(n, R) \cdot e_1$ and get the same group $\St(n, R)$ (this follows e.\,g. from Proposition~\ref{T16} below).
Notice that in general $\E(n, R)\cdot e_1$ is strictly smaller than $\Um(n, R)$ (and they are equal only under some assumptions on the dimension of $R$, e.\,g. $\sr(R)\leq n-1$, where $\sr(R)$ denotes the stable rank of $R$).
We also note that the presentation of the linear Steinberg group formulated in Theorem~\ref{vdkTheorem} is not the only possible one.
There is a variant of another presentation which does not involve unimodular columns at all (cf. \cite[Theorem~2]{Ka}).
                        
\section {Relative Steinberg groups}\label{relativeSteinberg}
Recall that in the context of~\cite{St1} an ordered pair $(R, I)$ consisting of a ring $R$ and an ideal $I\trianglelefteq R$ is called simply a {\it pair}.
A {\it morphism of pairs} $\varphi\colon (A, I)\rightarrow (A', I')$ is a ring morphism $\varphi\colon A\rightarrow A'$ satisfying $\varphi(I)\subseteq I'$.
We denote the category of pairs by \catname{Prs}.

Let $R$ be a ring and $I\trianglelefteq R$ be an ideal.
Denote by $D(R, I)$ the {\it double} of $R$ along $I$, i.e. the pullback of two copies of $R$ over $R/I$.
\begin{equation}\label{doubleRing} \xymatrix{\ar@{}[dr] D(R, I) \ar[r]^-{p_1} \ar[d]_{p_2} & R \ar[d]^\pi \\ R \ar[r]_-\pi & R/I.} \end{equation}
The elements of $D(R, I)$ can be interpreted as ordered pairs $a=(a_1,  a_2)\in R^2$ such that $a_1-a_2\in I$.
Maps $p_i$ are defined by $p_i(a)=a_i$, $i=1,2$.
Both $p_1$ and $p_2$ are split by the diagonal map $\Delta\colon R \rightarrow D(R, I)$. Clearly, $D$ is a functor from $\catname{Prs}$ to $\catname{Rings}$.

\begin{dfn}\label{semidirectProd}
Let $R$ be a ring and $A$ be a (not necessarily unital) $R$-algebra. 
Consider the product $R\times A$ as an abelian group with respect to componentwise addition.
Define the ring structure on $R\times A$ using the formula $(a; b) (c; d) = (ac; ad + bc + bd)$.
The resulting unital ring is denoted by $R\ltimes A$. Clearly, $0\times A$ is an ideal in $R\ltimes A$. \end{dfn}

It is easy to see that $R \cong R/I\ltimes I$ whenever the quotient map $R\twoheadrightarrow R/I$ admits section.
Applying this consideration to the exact sequence $0\times I\hookrightarrow D(R, I)\xrightarrow{p_1} R$ we get that $D(R, I) \cong R\ltimes I$ for any $R$, $I$.
To distinguish between these two equivalent interpretations of elements of $D(R, I)$ we adopt the following convention:
we write $(a_1, a_2)_I$ whenever we consider $D(R, I)$ as the fibered product $R \times_{R/I} R$ and use notation $(a; s)$ for elements of $D(R, I)$ if we interpret the latter as the semidirect product $R \ltimes I$.
In particular, $ (a_1, a_2)_I = (a_1; a_2 - a_1),\ (a; s) = (a, a+s)_I,\ a, a_1, a_2 \in R,\ s\in I.$

\subsection{The definition of the relative Steinberg group}\label{relativeSteinbergDef}
In the current section we define {\it relative} Steinberg groups and study their basic properties.
We follow the approach used by F.~Keune and J.-L.~Loday in the stable situation (cf.~\cite{Ke}, \cite{Lo}).

Consider the following two subgroups of $\St(\Phi, D(R, I))$: 
$$G_i := \Ker\left(p_i^*\colon \St(\Phi, D(R, I)) \rightarrow \St(\Phi, R)\right).$$ 
The groups $G_1$ and $G_2$ coincide with the normal closure in $\St(\Phi, D(R, I))$ of the subgroups spanned by
root unipotents $x_\alpha((0, s)_I)$, $x_\alpha((s, 0)_I)$, $s\in I$, respectively (see Lemma~\ref{titsLemma2}).

Denote by $C$ the mixed commutator subgroup $[G_1, G_2]\leq \St(\Phi, D(R, I)).$
Clearly, $C$ is a normal subgroup of $\St(\Phi, D(R, I))$ contained in $G_1\cap G_2$.
For every $g\in G_1\cap G_2$ we have that $p_i^*(\phi(g))=1_{\G(\Phi, R)}$, $i=1,2$ hence $G_1\cap G_2$ is contained in $\K_2(\Phi, D(R, I))$.
\begin{dfn}\label{relSt} Define the {\it relative} Steinberg group $\St(\Phi, R, I)$ as the quotient $G_1/C$. \end{dfn}
It is easy to see that the relative Steinberg group is a central extension of the normal subgroup of $\St(\Phi, R)$ spanned by elements $x_\alpha(s)$, $s\in I$.
More precisely, there is an exact sequence
\begin{equation}\label{suite}\begin{xymatrix} {1 \ar[r] & (G_1\cap G_2)/C \ar[r] & \St(\Phi, R, I) \ar[r]^{\overline{p_2^*}} & \St(\Phi, R) \ar[r]^{\pi^*} & \St(\Phi, R/I) \ar[r] & 1.} \end{xymatrix}\end{equation}
The definition of $\St(\Phi, R, I)$ is functorial in both $(R, I)$ and $\Phi$, i.e. there is a functor $\St\colon\catname{Subsys}(\Phi)\times \catname{Prs}\rightarrow\catname{Grp}$ (cf. Lemma~\ref{funct}).

In one important case the relative Steinberg group will be a subgroup of the absolute group $\St(\Phi, R)$.
\begin{lemma}\label{relSubgr} Let $\Phi$ be a root system of rank $\geq 2$ and $I\trianglelefteq R$ be an ideal such that the canonical map $R\rightarrow R/I$ splits.
Then the map $\overline{p_2^*}$ from sequence~\ref{suite} is injective. \end{lemma}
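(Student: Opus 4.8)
The approach is to convert the injectivity of $\overline{p_2^*}$ into the identity $G_1\cap G_2 = C$ and then to prove the latter by exhibiting, from the ring-theoretic section $f\colon R/I\to R$, a surjection onto $\St(\Phi, R, I)$ that is a one-sided inverse of $\overline{p_2^*}$. Since $\overline{p_2^*}$ is induced by $p_2^*$ and its kernel in sequence~\ref{suite} is exactly $(G_1\cap G_2)/C$, and since $C=[G_1,G_2]\subseteq G_1\cap G_2$ holds unconditionally, the content of the lemma is that the splitting forces $G_1\cap G_2\subseteq C$. First I would set $g=f\pi\colon R\to R$, an idempotent ring endomorphism with kernel $I$, and define a ring homomorphism $\iota\colon R\to D(R,I)$ by $\iota(r)=(g(r),r)_I$; this is well-defined because $g(r)-r\in I$, it is multiplicative because $g$ is, and it satisfies $p_2\iota=\mathrm{id}_R$. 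The induced map $\iota^*\colon\St(\Phi,R)\to\St(\Phi,D(R,I))$ then satisfies $p_2^*\iota^*=\mathrm{id}$, and since $\iota(t)=(0,t)_I$ for $t\in I$, it carries $N:=\Ker(\pi^*)$---which by Lemma~\ref{titsLemma2} is the normal closure of the root unipotents of level $I$ and equals $\Img(\overline{p_2^*})$---into the normal subgroup $G_1$. Composing with $G_1\twoheadrightarrow G_1/C=\St(\Phi,R,I)$ produces a homomorphism $\sigma\colon N\to\St(\Phi,R,I)$ with $\overline{p_2^*}\circ\sigma$ equal to the inclusion $N\hookrightarrow\St(\Phi,R)$.

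The decisive step is to prove that $\sigma$ is surjective: once this is known, any $x\in\Ker(\overline{p_2^*})$ can be written $x=\sigma(u)$, and then $u=\overline{p_2^*}\sigma(u)=\overline{p_2^*}(x)=1$, so $x=\sigma(1)=1$ and $\overline{p_2^*}$ is injective. To get a workable generating set of $G_1$ I would exploit that $p_1$ is split by the diagonal $\Delta$: applying the second statement of Lemma~\ref{titsLemma2} to the pair $(D(R,I),\Ker p_1)$ with this section shows that $G_1$ is generated by the elements $Z_\alpha((0,t)_I,(\xi,\xi)_I)$ with $t\in I$, $\xi\in R$, $\alpha\in\Phi$. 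It therefore suffices to show that each such generator lies in $\Img(\sigma)$ modulo $C$.

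For this final reduction I would split the conjugating parameter with the additivity relation~\ref{Radd}. Writing $(\xi,\xi)_I=(g(\xi),\xi)_I+(\xi-g(\xi),0)_I$, where the second summand lies in $\Ker p_2$, gives $x_{-\alpha}((\xi,\xi)_I)=a\cdot b$ with $a=x_{-\alpha}((g(\xi),\xi)_I)$ and $b=x_{-\alpha}((\xi-g(\xi),0)_I)\in G_2$. Setting $w=x_\alpha((0,t)_I)\in G_1$, conjugation of $w$ by $b\in G_2$ changes it only by an element of $[G_1,G_2]=C$, and the subsequent conjugation by $a$ preserves $C$ since $C$ is normal; hence $Z_\alpha((0,t)_I,(\xi,\xi)_I)$ is congruent modulo $C$ to the conjugate of $w$ by $a$, which is precisely $\iota^*(Z_\alpha(t,\xi))$. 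Thus the class of every generator of $G_1$ equals $\sigma(Z_\alpha(t,\xi))$, proving surjectivity. I expect the main obstacle to be exactly this bookkeeping: one must verify that replacing $(\xi,\xi)_I$ by $(g(\xi),\xi)_I$ produces an error term genuinely in $\Ker p_2$ (so that the resulting commutator is of the mixed type $[G_1,G_2]$ rather than some larger subgroup), and that the generating set furnished by Lemma~\ref{titsLemma2} is set up relative to the correct projection $p_1$ and its diagonal section rather than $p_2$.
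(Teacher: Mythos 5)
Your proof is correct; it shares the paper's key ingredients but assembles them into a genuinely different argument. The paper works directly with an arbitrary $g\in G_1\cap G_2$: identifying $D(R,I)\cong R/I\ltimes(I\times I)$ via the splitting, it writes $g$ (using Lemma~\ref{titsLemma2}) as a product of generators $Z_\alpha\bigl((0;(s_1,s_2)),(\overline{\xi};(0,0))\bigr)$, splits each generator into a $G_2$-factor times a $G_1$-factor by applying the two endomorphisms $(\theta_1p_1)^*$ and $(\theta_2p_2)^*$ of $\St(\Phi,D(R,I))$, rearranges modulo $C=[G_1,G_2]$, and telescopes to get $g\equiv(\theta_1p_1)^*(g)\cdot(\theta_2p_2)^*(g)=1\pmod{C}$, i.e.\ $G_1\cap G_2\subseteq C$. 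Your map $\iota$ is literally the paper's $\theta_2$ (one checks $\theta_2(r)=(f\pi(r),r)_I$), and your congruence $Z_\alpha((0,t)_I,(\xi,\xi)_I)\equiv\iota^*(Z_\alpha(t,\xi))\pmod{C}$ is exactly the paper's telescoping identity restricted to a single generator of $G_1$; the structural difference is that where the paper must rearrange arbitrary products of generators (moving the $G_2$-parts past the $G_1$-parts), you substitute the formal retraction argument: surjectivity of $\sigma$ together with $\overline{p_2^*}\sigma=\mathrm{incl}$ forces $\Ker\overline{p_2^*}=1$. Your route buys a verification confined entirely to generators, plus an explicit inverse isomorphism $\Ker(\pi^*)\xrightarrow{\sim}\St(\Phi,R,I)$ induced by $\iota^*$, which makes the realization of the relative group as a subgroup of $\St(\Phi,R)$ completely explicit; the paper's symmetric use of both $\theta_1$ and $\theta_2$ buys the direct identification $G_1\cap G_2=C$ without any detour through surjectivity. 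The two bookkeeping points you flag at the end do go through: $(\xi,\xi)_I-(f\pi(\xi),\xi)_I=(\xi-f\pi(\xi),0)_I$ indeed lies in $\Ker p_2$, and the paper itself applies Lemma~\ref{titsLemma2} to $p_1$ with the diagonal section $\Delta$ to obtain exactly your generating set of $G_1$ (this is how the generators $z_\alpha(s,\xi)$ are introduced immediately after the lemma).
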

\begin{proof}
 Using the isomorphism $R \cong R/I \ltimes I$ we get that $$D(R, I) = R\times_{R/I}R \cong (R/I\ltimes I)\times_{R/I} (R/I\ltimes I).$$
 Now it is easy to see that $D(R, I)$ is isomorphic to $R/I\ltimes(I\times I)$ with the isomorphism defined by 
 $((\overline{\xi}; s_1), (\overline{\xi}; s_2))_I \mapsto (\overline{\xi}; (s_1, s_2))$, $\overline{\xi}\in R/I$, $s_1, s_2 \in I$.

 Consider two maps $\theta_1, \theta_2\colon R \rightarrow D(R, I)$ sending $(\overline{\xi}; s)\in R/I \times I$ to $(\overline{\xi}; (s, 0))$ and $(\overline{\xi}; (0, s))$, respectively. 
 Let $g$ be an element of $G_1\cap G_2$. Clearly, $g$ lies in the kernel of $(\pi p_1)^* = (\pi p_2)^*$ and hence by Lemma~\ref{titsLemma2}
 it can be presented as a finite product $\prod_{i=1}^n g^{i}$ of generators of the form $g^i=Z_\alpha((0; (s_1, s_2)), (\overline{\xi}; (0, 0)))$, $s_1, s_2\in I$, $\overline{\xi} \in R/I$.
 
 Now each generator $g^i$ can be factored as $g^i_1 g^i_2$ where
 $$g^i_1=(\theta_1 p_1)^*(g^i) = Z_{\alpha}\left((0; (s_1, 0)), (\overline{\xi}; (0, 0))\right)\ \text{and}\ g^i_2=(\theta_2 p_2)^*(g^i)=Z_{\alpha}\left((0; (0, s_2)); (\overline{\xi}; (0, 0))\right)$$
 lying in $G_2$ and $G_1$ respectively.
 Since $g^i_1$ and $g^j_2$ commute modulo $C$ for every $1\leq i,j\leq n$, we can rearrange factors in the decomposition of $g$ so that in the quotient $\St(\Phi, D(R, I))/C$ we have
 $$\overline{g} =  \prod_{i=1}^n \overline{g^i_1} \overline{g^i_2} = \prod_{i=1}^n \overline{g^i_1} \cdot \prod_{i=1}^n \overline{g^i_2} = \overline{(\theta_1 p_1)^*(g)} \cdot \overline{(\theta_2 p_2)^*(g)} = \overline{\theta_1^*(1)} \cdot \overline{\theta_2^*(1)} = 1. \qedhere$$
\end{proof}

From now on we will use simpler notation for certain elements of $\St(\Phi, R, I)$.
For $s\in I$, $\xi\in R$ set
$$z_\alpha(s, \xi) := Z_{\alpha}((0, s)_I, \Delta(\xi))C = x_\alpha((0, s)_I)^{x_{-\alpha}(\Delta(\xi))}C,\ \ y_\alpha(s) := z_\alpha(s, 0).$$
Applying Lemma~\ref{titsLemma2} to the projection $p_1$ and section $\Delta$ and passing to the quotient modulo $C$ we obtain that elements $z_\alpha(s, \xi)$ generate $\St(\Phi, R, I)$ as an abstract group if $\rk(\Phi)\geq 2$.

\begin{lemma}\label{Zrels} Let $\Phi$ be a simply laced root system and $\alpha,\beta\in \Phi$ be such that $\alpha+\beta\in\Phi$.
Then elements $z_\alpha(s, \xi)$ satisfy the following relations ($\xi, \eta\in R$, $s\in I$):
\begin{enumerate} 
\item\label{Z1} $z_{\alpha}(s, \xi) ^ {x_{-\alpha}(\eta)} = z_{\alpha}(s, \xi + \eta)$;
\item\label{Z2} $z_{\alpha}(s, \xi) ^ {x_{\beta}(\eta)} = y_{\beta} (- s\xi \eta) \cdot y_{\alpha+\beta} (N_{\alpha,\beta}\cdot s\eta)     \cdot z_{\alpha}(s, \xi)$;
\item\label{Z3} $z_{\alpha}(s, \xi) ^ {x_{-\beta}(\eta)} = y_{-\beta} (s\xi \eta) \cdot y_{-\alpha-\beta} (N_{\alpha,\beta}\cdot s\xi^2\eta) \cdot z_{\alpha}(s, \xi)$;
\item\label{Z4} $z_{\alpha}(s, \xi) ^ {x_{\gamma}(\eta)} = z_{\alpha}(s, \xi)\ \text{if}\ \gamma\perp\alpha$.
\end{enumerate} \end{lemma}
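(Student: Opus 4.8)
The plan is to read every assertion as an exact identity in the \emph{absolute} group $\St(\Phi, D(R,I))$ and only project down to $\St(\Phi,R,I)=G_1/C$ at the very end. Recall that, via the section $\Delta^*$ of $p_1^*$, the group $\St(\Phi,R)$ acts on $G_1/C$ by conjugation with diagonal lifts, so that $z_\alpha(s,\xi)^{x_\beta(\eta)}$ is the class modulo $C$ of the conjugate of the representative $x_\alpha((0,s)_I)^{x_{-\alpha}(\Delta(\xi))}$ of $z_\alpha(s,\xi)$ by $x_\beta(\Delta(\eta))$. With this reading, relation~\ref{Z1} is immediate from the additivity relation~\ref{Radd}: conjugating by $x_{-\alpha}(\Delta(\eta))$ simply replaces the inner letter $x_{-\alpha}(\Delta(\xi))$ by $x_{-\alpha}(\Delta(\xi+\eta))$. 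Relation~\ref{Z4} is equally immediate from~\ref{Rcf21}: in a simply laced system $\gamma\perp\alpha$ means that neither $\gamma+\alpha$ nor $\gamma-\alpha$ is a root, so $x_\gamma(\Delta(\eta))$ commutes with both $x_{\alpha}((0,s)_I)$ and $x_{-\alpha}(\Delta(\xi))$, hence with the whole representative.

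For~\ref{Z2} and~\ref{Z3} the key geometric input is that $\alpha+\beta\in\Phi$ forces $(\alpha,\beta)=-1$ in a simply laced system, whence the vectors $\beta-\alpha$, $\alpha-\beta$, $2\alpha+\beta$ and $\alpha+2\beta$ are all non-roots; this fixes completely which root subgroups commute during the computation. I would then conjugate the representative of $z_\alpha(s,\xi)$ by $x_{\beta}(\Delta(\eta))$ (resp. $x_{-\beta}(\Delta(\eta))$) and push the conjugating letters through the word using~\ref{Rcf21}--\ref{Rcf22}. For~\ref{Z2} the only nontrivial step produces, via~\ref{Rcf22}, a letter $x_{\alpha+\beta}(\ldots)$ out of $[x_\beta(\Delta(\eta)),x_\alpha((0,s)_I)]$; a second application of~\ref{Rcf22} to carry this letter past $x_{-\alpha}(\Delta(\xi))$ spawns a letter $x_\beta(\ldots)$, and after the two copies of $x_\beta(\pm\Delta(\eta))$ cancel one is left with a word of the form $x_\beta(\ast)\,x_{\alpha+\beta}(\ast')\cdot(\text{representative of }z_\alpha(s,\xi))$. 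For~\ref{Z3} the analogous manipulation leaves the class in the form $P\cdot(\text{representative of }z_\alpha)\cdot P^{-1}$ with $P=x_{-\alpha-\beta}(\ast)$; expanding $PZP^{-1}=[P,Z]Z$ and using the single surviving commutator $[x_{-\alpha-\beta}(\ast),x_\alpha((0,s)_I)]=x_{-\beta}(\ast)$ together with one more application of~\ref{Rcf22} yields the two claimed $y$-factors, their order being immaterial since $y_{-\beta}$ and $y_{-\alpha-\beta}$ commute ($\alpha+2\beta\notin\Phi$).

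The step that makes this bookkeeping collapse to the stated answer is the multiplicative structure of the double $D(R,I)\cong R\ltimes I$: every argument $\ast,\ast'$ above is a product that contains the factor $(0,s)_I=(0;s)$, which lies in the ideal $0\times I$; hence each such argument again lies in $0\times I$, so the corresponding letters $x_\gamma(\ast)$ are genuine elements of $G_1$ and project modulo $C$ exactly to elements $y_\gamma(\,\cdot\,)$. This is what turns the raw output of the commutator calculus into the right-hand sides of~\ref{Z2} and~\ref{Z3}.

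The one genuinely delicate point --- and where I expect all the friction to be --- is matching the \emph{signs} of the coefficients precisely as stated. This reduces to the standard identities for a Chevalley basis of a simply laced system, namely $N_{\beta,\alpha}=-N_{\alpha,\beta}$, $N_{-\alpha,-\beta}=-N_{\alpha,\beta}$, the ``cyclic'' identity $N_{-\alpha-\beta,\alpha}=N_{\alpha,\beta}$ (valid since $(-\alpha-\beta)+\alpha+\beta=0$), and $N_{\alpha,\beta}^2=1$. Granting these, one checks $N_{-\alpha,\alpha+\beta}\,N_{\beta,\alpha}=-1$ and $N_{-\alpha-\beta,\alpha}\,N_{-\beta,-\alpha}=1$, which are exactly the identities that produce the coefficients $-s\xi\eta$, $N_{\alpha,\beta}s\eta$ in~\ref{Z2} and $s\xi\eta$, $N_{\alpha,\beta}s\xi^2\eta$ in~\ref{Z3}. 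Once the paper's conjugation convention is fixed so that the signs of~\ref{Z1}--\ref{Z2} are internally consistent, the three computations become entirely mechanical.
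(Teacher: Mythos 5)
Your proposal is correct and takes essentially the same route as the paper: the paper's proof of this lemma is just the one-line remark that the formulae "can be checked directly using relations~\ref{Rcf21}--\ref{Rcf22} and identities for structure constants," and your argument is precisely that direct check, carried out with the correct reading of the action (conjugation by diagonal lifts in $\St(\Phi, D(R,I))$, then projection mod $C$), the correct non-root conditions ($\alpha-\beta$, $2\alpha+\beta$, $\alpha+2\beta \notin \Phi$), and the standard Chevalley-basis sign identities. The sign bookkeeping you flag as delicate does come out as you claim ($N_{-\alpha,\alpha+\beta}=N_{\alpha,\beta}$ and $N_{-\alpha-\beta,\alpha}=N_{\alpha,\beta}$ yield exactly the stated coefficients), so nothing is missing.
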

\begin{proof} These formulae can be checked directly using relations~\ref{Rcf21}--\ref{Rcf22} and identities for structure constants (see~\cite[\S~1]{V} or \cite[\S~14]{VP}). \end{proof}

Let $X$ be a set and $G$ be a group. Denote by $F$ the group freely generated by $X\times G$. Let $R$ be a subset of $F$.
\begin{dfn} A group $H$ is said to be {\it presented as a $G$-group} by generators $X$ and relations $R$ if it is isomorphic to the quotient of $F$ modulo the smallest normal $G$-invariant subgroup containing $R$. \end{dfn}
It is easy to see that $G$ naturally acts on $H$ on the right.
Below we use more familiar notation $x^g$ for each generator $(x, g)\in X\times G$.

Now, we are ready to formulate an analogue of Swan's presentation for relative Steinberg groups (cf. with~\cite[\S~4]{Lo}, \cite[Proposition~11]{Ke}). 
\begin{prop}\label{SwanP}
 Let $\Phi$ be a simply laced irreducible root system of rank $\geq 2$.
 Then $\St(\Phi, R, I)$ can be presented as a $\St(\Phi, R)$-group with generators $y_{\alpha}(s)$, $s\in I$, $\alpha\in\Phi$ and relations:
 \begin{enumerate}
  \item \label{R1} $y_{\alpha}(s_1)y_{\alpha}(s_2) = y_{\alpha}(s_1+s_2),\ s_1,s_2\in I;$
  \item \label{R2} $[y_{\alpha}(s_1), y_{\beta}(s_2)] = 1,\ \alpha+\beta\not\in\Phi;$
  \item \label{R3} $[y_{\alpha}(s_1), y_{\beta}(s_2)] =  y_{\alpha+\beta}(N_{\alpha,\beta}\cdot s_1 s_2),\ \alpha+\beta\in\Phi;$
  \item \label{R4} $y_{\alpha}(s)^{x_{\beta}(\xi)} = y_{\alpha}(s),\ \xi\in R,\ \alpha+\beta\not\in\Phi,\ \alpha\neq -\beta$;
  \item \label{R5} $y_{\alpha}(s)^{x_{\beta}(\xi)} = y_{\alpha}(s) y_{\alpha+\beta}(N_{\alpha,\beta}\cdot \xi s),\ \alpha+\beta\in\Phi;$  
  \item \label{R6} $y_{\alpha}(s) ^ {g \cdot x_{\beta}(t)} = y_{\beta}(-t) \cdot y_{\alpha}(s)^g \cdot y_{\beta}(t),\ s, t\in I, g\in \St(\Phi, R),\alpha,\beta\in\Phi. $
 \end{enumerate}
\end{prop}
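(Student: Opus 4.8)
The plan is to follow the Keune--Loday strategy (cf.~\cite{Ke}, \cite{Lo}): realize the canonical surjection from the abstractly presented group onto $\St(\Phi, R, I)$ and then exhibit a one-sided inverse. Denote by $P$ the $\St(\Phi, R)$-group presented by generators $y_\alpha(s)$, $s\in I$, $\alpha\in\Phi$ and relations \ref{R1}--\ref{R6}. Since the elements $y_\alpha(s)$ generate $\St(\Phi, R, I)$ as a $\St(\Phi, R)$-group (by the discussion following Lemma~\ref{titsLemma2}, as $z_\alpha(s,\xi)=y_\alpha(s)^{x_{-\alpha}(\xi)}$), and since the relations \ref{R1}--\ref{R6} hold between them (see below), there is a canonical surjective morphism of $\St(\Phi, R)$-groups $\Theta\colon P \twoheadrightarrow \St(\Phi, R, I)$. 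It then suffices to construct a morphism $\bar f\colon \St(\Phi, R, I) \to P$ with $\bar f \circ \Theta = \mathrm{id}_P$, for then $\Theta$ is injective, hence an isomorphism.

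First I would verify that \ref{R1}--\ref{R6} hold in $\St(\Phi, R, I)=G_1/C$. Relations \ref{R1}--\ref{R5} are the images modulo $C$ of the Steinberg relations \ref{Radd}, \ref{Rcf21}--\ref{Rcf22} applied to the root unipotents $x_\alpha((0,s)_I)\in G_1$ and to their commutators with $\Delta^*(x_\beta(\xi))=x_\beta((\xi,\xi)_I)$; here one uses that $(0,s_1)_I\cdot (0,s_2)_I = (0,s_1s_2)_I$ and $(0,s)_I\cdot(\xi,\xi)_I=(0,s\xi)_I$ in $D(R,I)$. The decisive relation \ref{R6} comes from the identity $\Delta^*(x_\beta(t)) = x_\beta((t,0)_I)\cdot x_\beta((0,t)_I)$ with $t\in I$: the first factor lies in $G_2$, so conjugation by it is trivial modulo $C=[G_1,G_2]$, whence the action of $\Delta^*(x_\beta(t))$ agrees modulo $C$ with conjugation by the generator $y_\beta(t)=x_\beta((0,t)_I)C$.

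To build $\bar f$ I would use the splitting $\St(\Phi, D(R,I)) \cong G_1 \rtimes \St(\Phi, R)$ afforded by $p_1^*$ and its section $\Delta^*$, together with the isomorphism $D(R,I)\cong R\ltimes I$. Writing a generator as $x_\alpha((a;s))$, $a\in R$, $s\in I$, define $f\colon \St(\Phi, D(R,I)) \to P \rtimes \St(\Phi, R)$ on generators by $f(x_\alpha((a;s))) = y_\alpha(s)\, x_\alpha(a)$. The main obstacle is checking that $f$ respects the defining relations \ref{Radd} and \ref{Rcf21}--\ref{Rcf22} of $\St(\Phi, D(R,I))$; this is a direct but lengthy computation of commutators in the semidirect product $P\rtimes \St(\Phi, R)$, using the additivity \ref{R1}, the commutator relations \ref{R2}--\ref{R3} and the action relations \ref{R4}--\ref{R5} (notably, \ref{R6} is \emph{not} needed here). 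By construction the projection $P\rtimes \St(\Phi, R) \to \St(\Phi, R)$ composed with $f$ equals $p_1^*$, so $f(G_1)\subseteq P$; since $G_1$ is normal, applying $f$ to the $\Delta^*$-conjugates of $x_\alpha((0,s)_I)$ shows $f(G_1)=P$.

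Finally I would prove $f(C)=1$, which is the conceptual heart and the only place \ref{R6} enters. It is enough to show that $f$ sends each generator of $G_2$ into the centralizer of $P$ in $P\rtimes \St(\Phi, R)$. For a typical generator $Z_\alpha((s,0)_I,\Delta(\xi))$ of $G_2$ (Lemma~\ref{titsLemma2}) one computes $f(Z_\alpha((s,0)_I,\Delta(\xi))) = m\,n$ with $m = y_\alpha(-s)^{x_{-\alpha}(\xi)}\in P$ and $n = x_\alpha(s)^{x_{-\alpha}(\xi)}\in \St(\Phi, R)$; repeatedly applying \ref{R6} (with $t=s\in I$) shows that the action of $n$ on $P$ coincides with the inner automorphism $Y\mapsto m\,Y\,m^{-1}$, so that $m\,n$ centralizes $P$. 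Hence $[f(G_1),f(G_2)]=1$, so $f$ kills $C=[G_1,G_2]$, and $f|_{G_1}$ descends to a morphism of $\St(\Phi, R)$-groups $\bar f\colon \St(\Phi, R, I)=G_1/C \to P$. A check on generators gives $\bar f(\Theta(y_\alpha(s)))=y_\alpha(s)$, so $\bar f\circ\Theta=\mathrm{id}_P$, completing the argument.
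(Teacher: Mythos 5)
Your proposal is correct and runs along the same Keune--Loday/Swan axis as the paper: both arguments hinge on the identical homomorphism $x_\alpha((a;s))\mapsto y_\alpha(s)\,x_\alpha(a)$ into a semidirect product, and both defer the routine (but lengthy) verification of the Steinberg relations to Swan's computation. The difference is in how the commutator subgroup $C=[G_1,G_2]$ is disposed of. The paper works with the group $H$ presented by relations \eqref{R1}--\eqref{R5} only, shows that Swan's map is an isomorphism $\St(\Phi,D(R,I))\cong H\rtimes\St(\Phi,R)$, and then matches quotients on both sides: every word coming from relation \eqref{R6} is the image of an explicit element of $C$, and conversely $C$ is generated by conjugates of such commutators --- this converse requires the standard commutator identities applied to generators of $G_1$ and $G_2$. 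You instead map directly into $P\rtimes\St(\Phi,R)$ for the full group $P$ with relations \eqref{R1}--\eqref{R6}, and kill $C$ by showing that the images of the generators $Z_\alpha((s,0)_I,\Delta(\xi))$ of $G_2$ (Lemma~\ref{titsLemma2}) lie in the centralizer of $P$, relation \eqref{R6} turning the $\St(\Phi,R)$-part of such an image into an inner automorphism; this gives $f(C)=[f(G_1),f(G_2)]=1$ without ever producing a generating set for $C$ itself. The role of the paper's identity $w=\theta(c)$ is played in your argument by the verification that \eqref{R6} holds in $G_1/C$ (needed for your surjection $\Theta$), via the same factorization $\Delta^*(x_\beta(t))=x_\beta((t,0)_I)\,x_\beta((0,t)_I)$ with the first factor in $G_2$; the retraction $\bar f\circ\Theta=\mathrm{id}_P$ then closes the loop. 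Both routes are sound and of comparable length; yours trades the generation-of-$C$ step for the centralizer computation, which is arguably cleaner, with the one caveat that you must fix the semidirect-product conventions once and for all, since whether $m\,n$ or $m^{-1}n$ is the element centralizing $P$ depends on whether the right action $Y^{g}$ is realized as $g^{-1}Yg$ or $gYg^{-1}$ (with the standard choice $Y^{g}=g^{-1}Yg$ your claim is the correct one).
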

\begin{proof}
Denote by $H$ the $\St(\Phi, R)$-group given by generators $y_\alpha(s)$ and relations \eqref{R1}--\eqref{R5}.
One can repeat the argument of R.~Swan (cf.~\cite[Lemma~7.8]{Sw71}, \cite[Lemma~8.4]{Sw70}) and show that $G_1$ and $H$ are isomorphic.
One has to verify that the map $\theta$ which sends $x_\alpha((\xi_1, \xi_2)_I) \in \St(\Phi, D(R, I))$ to $(y_\alpha(\xi_2-\xi_1), x_\alpha(\xi_1)) \in H \rtimes \St(\Phi, R)$ defines an isomorphism of these groups.
By restricting $\theta$ on $G_1$ we obtain the needed isomorphism of $G_1$ and $H$. 

It remains to verify that $G_1/C$ is isomorphic to the quotient of $H$ modulo the smallest $\St(\Phi, R)$-invariant normal subgroup generated by all words corresponding to relation~\eqref{R6}, i.e. words of the form
$w=y_\alpha(s)^g \cdot y_\beta(t) \cdot y_{\alpha}(-s)^{g \cdot x_\beta(t)} \cdot y_\beta(-t)$. 
It is easy to check that $w = \theta(c)$, where $c = [x_\alpha((0, s)_I)^{\Delta^*(g)}, x_\beta((-t, 0)_I)] \in C$.
It remains to notice that using standard commutator identities any element of $C=[G_1, G_2]$ can be rewritten as a product of conjugates of commutators of this form. 
\end{proof}

\begin{rem}\label{R62} From Lemma~\ref{titsLemma2} it follows that one can rewrite $y_{\alpha}(s_1)^g$ modulo relations~\eqref{R1}--\eqref{R5} as a product of elements of the form $y_{\alpha'}(s) ^ {x_{-\alpha'}(\xi)}$.
Therefore, one can additionally assume in the statement of relation~\eqref{R6} that $g = x_{-\alpha}(\xi)$, $\xi\in R$. \end{rem}

One can consider Proposition~\ref{SwanP} as a universal property of relative Steinberg groups.
Indeed, it allows one to construct a unique map $\St(\Phi, R, I) \rightarrow G$ whenever one chooses certain elements $y'_\alpha(s)\in G$ and defines an action of $\St(\Phi, R)$ on $G$ which behaves on $y'_\alpha(s)$ according to relations \eqref{R1}--\eqref{R6}.

\subsection{Case $\Phi = \rA_\ell$, $\ell\geq 3$.}
The main goal of this subsection is to show that the abstract definition of the relative Steinberg group in the linear case agrees with 
Tulenbaev's definition formulated in terms of another presentation (cf.~\cite[Definition~1.5]{T}).

\begin{dfn}\label{TulDef}
Let $U_{I, R}$ denote the set consisting of pairs $(v, w)$ where $v$ is a column lying in the orbit $\E(n, R)\cdot e_1$ and $w \in {}^n\! I$ is a row satisfying $wv=0$.
For $n\geq 4$ the relative linear Steinberg group $\St(n, R, I)$ is the group defined by generators $X_{v,w}$,  $(v, w)\in U_{I, R}$ and relations \ref{AP1}--\ref{AP3}. \end{dfn} 

There is a well-defined action of $\St(n, R)$ on $\St(n, R, I)$ defined by the formula ${X_{v,w}}^g = X_{\phi(g^{-1})\cdot v, w \cdot \phi(g)}$, where $g\in \St(n, R)$ and $\phi$ is the canonical map from~\ref{basicExSeq}.
From the definition of $\St(n, R, I)$ it also follows that there is a map $j\colon \St(n, R, I) \rightarrow \St(n, R)$ induced by the inclusion $U_{I, R}\subseteq U$.
\begin{prop} \label{T16} Let $I$ be an ideal such that the canonical projection $\pi\colon R\rightarrow R/I$ splits, then the following facts hold:
\begin{enumerate}
 \item The map $j$ is injective and $\St(n, R, I)$ can be interpreted as a subgroup of $\St(n, R)$;
  \item $\St(n, R)$ is isomorphic to $\St(n, R/I) \ltimes \St(n, R, I)$ (the action $\St(n, R/I)$ on $\St(n, R, I)$ is induced by the conjugation action of $\St(\Phi, R)$ via the splitting map).
\end{enumerate} \end{prop}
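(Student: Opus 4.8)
The plan is to reduce both assertions to a single fact: the injectivity of $j$. Since $\pi$ splits, fix a section $f\colon R/I\rightarrow R$ and form the idempotent ring endomorphism $e=f\pi\colon R\rightarrow R$. It induces the idempotent group endomorphism $e^*=f^*\pi^*$ of $\St(n,R)$, and for any idempotent endomorphism $p$ of a group $G$ one has $G=\Img(p)\ltimes\Ker(p)$. Applied to $e^*$ this yields $\St(n,R)=\Img(f^*)\ltimes\Ker(\pi^*)$, where $\Img(f^*)=\Img(e^*)$ is identified with $\St(n,R/I)$ via the section (as $\pi^*f^*=\mathrm{id}$). Thus the content of the second assertion is precisely the identification of $\Ker(\pi^*)$ with $\St(n,R,I)$ through $j$.

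First I would record the elementary properties of $j$. The action formula ${X_{v,w}}^g=X_{\phi(g^{-1})v,\,w\phi(g)}$ together with relation~\ref{AP2} (note $\phi(X_{v',u'})=e+v'u'$) shows that $j$ is $\St(n,R)$-equivariant; and since $w\in{}^n\!I$ forces $\pi^*(X_{v,w})=X_{\pi v,0}=1$, the image of $j$ lies in $\Ker(\pi^*)$. To see that $\Img(j)=\Ker(\pi^*)$, decompose each generator as $X_{e_i,\xi e_j^t}=X_{e_i,\,f\pi(\xi)e_j^t}\cdot X_{e_i,\,(\xi-f\pi(\xi))e_j^t}$ via relation~\ref{AP1} ($i\neq j$), the first factor lying in $\Img(f^*)$ and the second, since $\xi-f\pi(\xi)\in I$, in $\Img(j)$. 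As $\Img(j)$ is normal (by equivariance), this gives $\St(n,R)=\Img(f^*)\cdot\Img(j)$, and intersecting with $\Ker(\pi^*)$ yields $\Ker(\pi^*)=\Img(j)$. Packaging this, the map $\Theta\colon\St(n,R/I)\ltimes\St(n,R,I)\rightarrow\St(n,R)$, $(\bar h,a)\mapsto f^*(\bar h)\cdot j(a)$, is a well-defined homomorphism by equivariance and is surjective by the above; moreover $\pi^*\circ\Theta$ is the projection onto the first factor, so $\Ker(\Theta)\subseteq\{1\}\times\Ker(j)$. Hence both parts of the proposition follow once $j$ is shown to be injective.

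The main obstacle is therefore the injectivity of $j$, which I would obtain by producing a left inverse $\rho$ to $\Theta$. Using the presentation of $\St(n,R)$ by generators $X_{v,u}$ and relations~\ref{AP1}--\ref{AP2} (valid for $n\geq4$ by Theorem~\ref{vdkTheorem}), I would define $\rho(X_{v,u})=\bigl(X_{\pi v,\pi u},\ \beta_{v,u}\bigr)$, where $\beta_{v,u}\in\St(n,R,I)$ is the word representing the relative part $X_{f\pi v,\,f\pi u}^{-1}X_{v,u}$, rewritten through the generators $X_{v',w'}$, $(v',w')\in U_{I,R}$, by splitting $v=f\pi v+(v-f\pi v)$ and $u=f\pi u+(u-f\pi u)$ with $v-f\pi v$, $u-f\pi u$ having entries in $I$. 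The crux is to verify that $\rho$ respects relations~\ref{AP1} and~\ref{AP2}; this is a direct but delicate manipulation with the relative relations~\ref{AP1}--\ref{AP3} and the $\St(n,R)$-action, and it is exactly here that the choice $v\in\E(n,R)\cdot e_1$ (rather than an arbitrary unimodular column) and the bound $n\geq4$ enter, since splitting off the $I$-part of $u$ disturbs the orthogonality $uv=0$ and must be compensated by a matching adjustment of the column. Granting this verification, one checks $\rho\circ\Theta=\mathrm{id}$ on the two families of generators of the semidirect product (both computations collapse because $\pi f=\mathrm{id}$ and $\pi w'=0$), whence $\Theta$ is injective; its restriction to $\{1\}\times\St(n,R,I)$ is $j$, so $j$ is injective, and the decomposition in the second assertion then follows from the first paragraph.
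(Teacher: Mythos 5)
Your first two paragraphs are correct, and they cleanly reduce both assertions to the injectivity of $j$: the idempotent $e^*=f^*\pi^*$ gives $\St(n,R)=\Img(f^*)\ltimes\Ker(\pi^*)$; equivariance of $j$ together with the factorization $X_{e_i,\xi e_j^t}=X_{e_i,f\pi(\xi)e_j^t}\cdot X_{e_i,(\xi-f\pi(\xi))e_j^t}$ and normality of $\Img(j)$ gives $\Img(j)=\Ker(\pi^*)$; and $\pi^*\circ\Theta=\mathrm{pr}_1$ gives $\Ker(\Theta)=\{1\}\times\Ker(j)$. For comparison: the paper offers no argument of its own for this proposition --- it simply cites \cite[Proposition~1.6]{T} --- so you are attempting to reprove an outsourced result, and the reduction itself goes beyond what the paper records.

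The genuine gap is in your third paragraph, i.e.\ exactly at the point to which you reduced everything. To define $\rho$ on the presentation \ref{AP1}--\ref{AP2} of $\St(n,R)$ you must (i) specify $\beta_{v,u}$ as a concrete word in the generators $X_{v',w'}$, $(v',w')\in U_{I,R}$, and (ii) verify that the assignment respects relations \ref{AP1} and \ref{AP2}. You do neither. Your recipe for (i) --- split $u=f\pi u+(u-f\pi u)$ --- does not even produce legal generators, since $f\pi(u)\cdot v=f\pi(u)\cdot(v-f\pi v)$ lies in $I$ but need not vanish, a failure you yourself point out (``disturbs the orthogonality'') without repairing; and the alternative description of $\beta_{v,u}$ as ``the word representing $X_{f\pi v,\,f\pi u}^{-1}X_{v,u}$'' is not a definition: any two such words differ by an element of $\Ker(j)$, whose triviality is precisely what is being proved, so either the definition is circular or it depends on choices whose independence must itself be checked. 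Step (ii), which you call the crux, is dismissed as ``a direct but delicate manipulation'' and explicitly granted; yet this is where all the content sits, and where the hypotheses $n\geq 4$ and $v\in\E(n,R)\cdot e_1$ would actually have to be used. As it stands, your proposal is a correct reduction plus a plan for the hard part, not a proof of the proposition.
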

\begin{proof} See~\cite[Proposition~1.6]{T}. \end{proof}

In the rest of this subsection we identify vectors $v\in D(R, I)^n$ with pairs $(v, \widetilde{v}) \in R^n \times R^n$ such that $v-\widetilde{v}\in I^n$.
If a column $(v,\widetilde{v})\in D(R, I)^n$ is unimodular then both vectors $v, \widetilde{v} \in R^n$ are unimodular.

Applying Proposition~\ref{T16} to the map $p_1$ from diagram~\ref{doubleRing} we get that $$G_1=\Ker(p_1^*) \cong \St(n, D(R, I), 0\times I).$$
In particular, $G_1$ is generated as an abstract group by elements $X_{(v, \widetilde{v}), (0, \widetilde{w})}$, $((v, \widetilde{v}), (0, \widetilde{w}))\in U_{0\times I, D(R, I)}$.
Similar statement holds for $G_2$.

Now it is easy to see that $C\leq \Ker(p_1^*)$ is generated by elements of the form 
\begin{equation}\label{KLAP0} [X_{(v_1, \widetilde{v_1}), (0, \widetilde{w_1})}, X_{(v_2, \widetilde{v_2}), (w_2, 0)}] = 
   X_{(v_1, \widetilde{v_1}), (0, \widetilde{w_1})} X_{((e + v_2 w_2)v_1, \widetilde{v_1}), (0, -\widetilde{w_1})}.\end{equation}
Recall that $\E(n, R, I)$ is generated as an abstract group by transvections $T_{v_2, w_2} = e + v_2w_2$, $(v_2, w_2)\in U_{I, R}$ (indeed, the conjugate of any root unipotent $t_\alpha(s)$, $s\in I$ has this form).
The last statement together with~\ref{KLAP0} imply that
\begin{equation}\label{KLAP} X_{(v_1, \widetilde{v_1}), (0, \widetilde{w_1})}C = X_{(gv_1, \widetilde{v_1}), (0, \widetilde{w_1})}C\ \text{for any}\ g\in\E(n, R, I).\end{equation}

\begin{prop} Let $n\geq 4$, then Tulenbaev's relative group $\St(n, R, I)$ is isomorphic to Loday's relative group $\St(\rA_{n-1}, R, I)$. \end{prop}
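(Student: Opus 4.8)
The plan is to exhibit two mutually inverse homomorphisms between Tulenbaev's group $\St(n, R, I)$ and Loday's group $\St(\rA_{n-1}, R, I) = G_1/C$. The starting point is the identification, already obtained by applying Proposition~\ref{T16} to the (always split) projection $p_1$, of $G_1$ with Tulenbaev's relative group $\St(n, D(R, I), 0\times I)$; thus Loday's group is the quotient $G_1/C$ and the generators of $G_1$ are the symbols $X_{(v, \widetilde{v}), (0, \widetilde{w})}$.

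First I would construct a map $\Theta\colon G_1/C\rightarrow\St(n, R, I)$. Since $p_2\colon D(R, I)\rightarrow R$ carries the ideal $0\times I$ onto $I$, functoriality of Tulenbaev's construction in pairs yields a homomorphism $G_1=\St(n, D(R, I), 0\times I)\rightarrow\St(n, R, I)$ sending $X_{(v, \widetilde{v}), (0, \widetilde{w})}$ to $X_{\widetilde{v}, \widetilde{w}}$. To descend it to $G_1/C$ it is enough to kill each generator of $C$: applying $p_2$ to the right-hand side of~\eqref{KLAP0} turns it into $X_{\widetilde{v_1}, \widetilde{w_1}}\cdot X_{\widetilde{v_1}, -\widetilde{w_1}}$, which collapses to the identity by~\eqref{AP1}. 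The resulting map $\Theta$ is clearly surjective, as it hits every generator of $\St(n, R, I)$.

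For the inverse I would use the diagonal map $\Delta\colon R\rightarrow D(R, I)$ to lift Tulenbaev's generators, setting $\Psi(X_{v, w}) := X_{\Delta(v), (0, w)}\cdot C$; this is a legitimate generator of $G_1$ because $\Delta(v)$ lies in the elementary orbit and $(0, w)\cdot\Delta(v)=(0, wv)=0$. Well-definedness amounts to checking relations~\eqref{AP1}--\eqref{AP3} in $G_1/C$. Relations~\eqref{AP1} and~\eqref{AP3} follow immediately from the corresponding relations in $G_1$ together with $\Delta(b)(0, w)=(0, bw)$. The one genuinely interesting case is~\eqref{AP2}: conjugating $X_{\Delta(v), (0, w)}$ by $X_{\Delta(v'), (0, w')}$ inside $G_1$ produces the symbol whose column is the pair $(v,\ (e-v'w')v)$ and whose row is $(0,\ w(e+v'w'))$, whereas the lift of the right-hand side $X_{(e-v'w')v,\, w(e+v'w')}$ is the fully diagonal symbol with both column coordinates equal to $(e-v'w')v$. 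These two symbols have the same second column coordinate and the same row and differ only by multiplying the first column coordinate by $e-v'w'=T_{v',-w'}\in\E(n, R, I)$, hence coincide modulo $C$ by~\eqref{KLAP}. This is precisely where the commutator relation~\eqref{KLAP} is indispensable.

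Finally I would verify that $\Theta$ and $\Psi$ are mutually inverse. The composite $\Theta\Psi=\mathrm{id}$ is immediate from $p_2\circ\Delta=\mathrm{id}_R$. The opposite composite sends the class of $X_{(v, \widetilde{v}), (0, \widetilde{w})}$ to that of the diagonal symbol $X_{\Delta(\widetilde{v}), (0, \widetilde{w})}$, and identifying these two classes is the main obstacle of the argument: by~\eqref{KLAP} it reduces to producing $g\in\E(n, R, I)$ with $g\widetilde{v}=v$, i.e.\ to a transitivity statement for the relative elementary action on the columns $v\equiv\widetilde v\pmod I$ of the common orbit $\E(n,R)\cdot e_1$. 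I would resolve it by writing the unimodular column $(v, \widetilde{v})\in\E(n, D(R, I))\cdot e_1$ as $He_1$ with $H\in\E(n, D(R, I))$ and setting $g:=p_1(H)p_2(H)^{-1}$; then $g\widetilde v=p_1(H)e_1=v$ by construction, while decomposing $H=\Delta(p_1(H))\cdot h'$ with $h'\in\Ker(p_1)\cap\E(n,D(R,I))=\E(n, D(R, I), 0\times I)$ exhibits $g$ as a conjugate of $p_2(h')^{-1}\in\E(n, R, I)$, which lies in $\E(n, R, I)$ by normality of the relative elementary subgroup. With this transitivity in hand both composites are the identity, so $\Theta$ is the desired isomorphism.
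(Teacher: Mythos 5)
Your proof is correct and takes essentially the same route as the paper's: the same $p_2$-induced map $G_1/C\rightarrow\St(n,R,I)$ killing $C$ via~\ref{KLAP0} and~\ref{AP1}, the same diagonal lift $X_{v,w}\mapsto X_{\Delta(v),(0,w)}C$ with relation~\ref{AP2} verified through~\ref{KLAP}, and the same final reduction of the nontrivial composite to moving one column coordinate onto the other by an element of $\E(n,R,I)$. If anything, your justification of that last membership (decomposing $H=\Delta(p_1(H))\cdot h'$ with $h'\in\Ker(p_1)\cap\E(n,D(R,I))=\E(n,D(R,I),0\times I)$ and conjugating) is more explicit than the paper's one-line appeal to $\pi^*(g_2^{-1}g_1)=1$, which tacitly relies on the same splitting argument.
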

\begin{proof}
Define the map $\psi' \colon \Ker(p_1^*) \rightarrow \St(n, R, I)$ by 
$$\psi'(X_{(v, \widetilde{v}), (0, \widetilde{w})}) = X_{\widetilde{v}, \widetilde{w}},\ ((v, \widetilde{v}), (0, \widetilde{w}))\in U_{0\times I, D(R, I)}.$$
It is clear that $\psi'$ preserves the defining relations~\ref{AP1}--\ref{AP3} of $\Ker(p_1^*)$ while from relations~\ref{AP1} and~\ref{KLAP0} it follows that $\psi'(C)=1$.
Therefore, there exists an induced map $\psi\colon \St(\rA_{n-1}, R, I)\rightarrow \St(n, R, I)$.
   
Now, let us construct a map inverse to $\psi$.
Define $\varphi\colon\St(n, R, I)\rightarrow\St(\rA_{n-1}, R, I)$ using the formula $\varphi(X_{v,w})=X_{(v,v), (0,w)}C.$
It is clear that $\varphi$ preserves relations~\ref{AP1} and~\ref{AP3}. 
From identity~\ref{KLAP} it follows that relation~\ref{AP2} is also preserved by $\varphi$:
\begin{multline}\nonumber \varphi({X_{v,w}}^{X_{v',w'}}) = {X_{\Delta(v),(0,w)}}^{X_{\Delta(v'),(0,w')}}C = X_{(v, v - (v'w')v), (0,w + w(v'w'))}C = \\
= X_{\Delta((e - v'w')v), (0, w + w(v'w'))}C = \varphi(X_{(e - v'w')v, w (e + v'w')}). \end{multline}
Let us show that $\varphi$ and $\psi$ are inverse to each other.
Only equality $\varphi\psi = id$ is nontrivial. 
Let $((v, \widetilde{v}), (0, \widetilde{w}))$ be an element of $U_{0\times I, D(R, I)}$ and let $g=(g_1, g_2)\in \E(n, D(R, I))$ 
be such that $(g_1, g_2) (v,\widetilde{v}) = (e_1, e_1)$. 
From $\pi^*(g_2^{-1} \cdot g_1) = \pi^*(p_2^*(g)^{-1} \cdot p_1^*(g)) = 1$ we get that $g_2^{-1}\cdot g_1 \in \E(n, R, I)$ and from~\ref{KLAP} we obtain that
$$X_{(v, \widetilde{v}), (0, \widetilde{w_1})}C = 
X_{((g_2^{-1}\cdot g_1)v, \widetilde{v}), (0, \widetilde{w_1})}C = 
X_{(\widetilde{v}, \widetilde{v}), (0, \widetilde{w_1})}C,\ \text{as required}. \qedhere$$
\end{proof}

\begin{rem} Recently, A.~Lavrenov obtained a variant of another presentation for relative {\it symplectic} Steinberg groups (see~\cite{La2}). \end{rem}

\subsection{Tulenbaev's map}
Let $a \in R$ be a nonnilpotent element.
Denote by $\lambda_a\colon R\rightarrow R_a$ the morphism of principal localization at $a$ (i.e. localization at $\{1, a, a^2, a^3, \ldots\}$).
Similarly, if $M\trianglelefteq R$ is a prime ideal we denote by $\lambda_M$ the morphism of localization at $R\setminus M$.

Let $R_a[t]$ be a polynomial algebra over $R_a$. Clearly, $tR_a[t]$ is an algebra over $R$. This allows us to form semidirect product $R \ltimes tR_a[t]$ (see~Definition~\ref{semidirectProd}).
Elements of this ring can be interpreted as polynomials whose free terms belong to $R$ and all other coefficients lie in $R_a$.
There is a map $\theta\colon R[t]\rightarrow R\ltimes tR_a[t]$ which localizes all coefficients of terms of degree $\geq 1$ at $a$.

The key ingredient in Tulenbaev's proof of the local-global principle for linear Steinberg groups is the following observation.
\begin{lemma}\label{Tmap} For $n\geq 5$ the restriction of the map $\theta^*$ to the subgroup $\St(n, R[t], tR[t])$
factors through $\St(n, R_a[t], tR_a[t])$, i.e. there exists a map $T$ such that the following diagram commutes:
\begin{equation}\label{diagT}\begin{xymatrix} {                     & \St(n, R[t], tR[t]) \ar[dl]_{\lambda_{a}^*} \ar[dr]^{\theta^*} & \\
                  \St(n, R_a[t],tR_a[t]) \ar@{-->}^T[rr] &                     & \St(n, R\ltimes tR_a[t]).}  \end{xymatrix}\end{equation}\end{lemma}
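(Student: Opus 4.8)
The plan is to refine the target of $\theta^*$ and then build $T$ from the universal property behind Proposition~\ref{SwanP}, working throughout with the identification $\St(n,-)\cong\St(\rA_{n-1},-)$ of Theorem~\ref{vdkTheorem} and its relative counterpart. First I would observe that $\theta$ carries $tR[t]$ into $0\times tR_a[t]$, so that $\theta$ is a morphism of pairs $(R[t],tR[t])\to(R\ltimes tR_a[t],0\times tR_a[t])$, while $\lambda_a$ is a morphism of pairs $(R[t],tR[t])\to(R_a[t],tR_a[t])$. Since the projection $R\ltimes tR_a[t]\to R$ splits, Lemma~\ref{relSubgr} embeds $G^{\mathrm{rel}}:=\St(\rA_{n-1},R\ltimes tR_a[t],0\times tR_a[t])$ into $\St(\rA_{n-1},R\ltimes tR_a[t])$, and $\theta^*$ restricted to $\St(\rA_{n-1},R[t],tR[t])$ lands in $G^{\mathrm{rel}}$. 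Thus it suffices to produce an $N$-equivariant map $T\colon\St(\rA_{n-1},R_a[t],tR_a[t])\to G^{\mathrm{rel}}$ with $T\circ\lambda_a^*=\theta^*$.

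The crucial point is the following piece of denominator bookkeeping. Let $q\colon R\ltimes tR_a[t]\twoheadrightarrow R_a[t]$ be the ring map localizing the constant term, so that $q\circ\theta=\lambda_a$. Although there is no ring homomorphism $R_a[t]\to R\ltimes tR_a[t]$ (the denominators cannot be returned to the constant term), the ideal $0\times tR_a[t]$ is a genuine $R_a[t]$-module, and for any $\xi\in R\ltimes tR_a[t]$ and any $s$ in this ideal one has $\xi\cdot s=q(\xi)\cdot s$. Consequently the structure action of $\St(\rA_{n-1},R\ltimes tR_a[t])$ on $G^{\mathrm{rel}}$, which on the generators $y_\alpha(s)$ is governed by relations~\eqref{R4}--\eqref{R5} (compare Lemma~\ref{Zrels}) and involves an ambient element $\xi$ only through the product $\xi s$, depends on $\xi$ solely via $q(\xi)$. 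Hence it factors through $q^*\colon\St(\rA_{n-1},R\ltimes tR_a[t])\twoheadrightarrow N:=\St(\rA_{n-1},R_a[t])$: for $k=(r;0)$ with $\lambda_a(r)=0$, so $k\in\Ker(q)$, the element $x_\beta(k)$ acts trivially on every $y_\alpha(s)$ because $k\cdot s=0$ in the ideal, hence trivially on all of $G^{\mathrm{rel}}$; and as $\Ker(q^*)$ is generated by conjugates $Z_\beta(k,\xi)$ of such $x_\beta(k)$ (Lemma~\ref{titsLemma2}), a conjugate of a trivially acting element, $\Ker(q^*)$ acts trivially. This endows $G^{\mathrm{rel}}$ with an action of $N$.

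With this action in hand I would invoke the universal property of Proposition~\ref{SwanP}: choosing $y'_\alpha(s):=y_\alpha((0;s))\in G^{\mathrm{rel}}$ for $s\in tR_a[t]$, relations~\eqref{R1}--\eqref{R6} hold by construction, since every right-hand side again lies in the ideal $0\times tR_a[t]$. This yields a unique $N$-equivariant homomorphism $T$ sending $y_\alpha(s)\mapsto y'_\alpha(s)$. To verify $T\circ\lambda_a^*=\theta^*$ it is enough to test on the generators $z_\alpha(s,\xi)=y_\alpha(s)^{x_{-\alpha}(\xi)}$ of $\St(\rA_{n-1},R[t],tR[t])$ (Notation~\ref{Znot}, Lemma~\ref{titsLemma2}). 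On the one hand $T(\lambda_a^* z_\alpha(s,\xi))=y'_\alpha(\lambda_a s)$ acted on by $x_{-\alpha}(\lambda_a\xi)\in N$; on the other $\theta^* z_\alpha(s,\xi)=y'_\alpha(\lambda_a s)$ conjugated by $x_{-\alpha}(\theta\xi)$, and the two agree because $\theta(s)=(0;\lambda_a s)$ gives $y_\alpha(\theta s)=y'_\alpha(\lambda_a s)$, while conjugation by $\theta^*(x_{-\alpha}(\xi))$ realizes the $N$-action of $q^*(x_{-\alpha}(\theta\xi))=x_{-\alpha}(\lambda_a\xi)$.

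The hard part is exactly the descent in the second paragraph: making precise that the conjugation action over $R\ltimes tR_a[t]$ factors along $q^*$, i.e.\ that $\Ker(q^*)$ acts trivially on $G^{\mathrm{rel}}$. Conceptually this says that a relative Steinberg group feels its ambient ring only through the module structure of its ideal, and it is precisely here that the localization at $a$ is rendered harmless. I also expect that translating back to Tulenbaev's another-presentation generators $X_{v,w}$ (Definition~\ref{TulDef}, Proposition~\ref{T16}) is where the hypothesis $n\geq 5$ is genuinely consumed: there the direction column $v\in\E(n,R_a[t])e_1$ carries denominators in its constant term, and reconciling such generators with liftable data forces one to split columns via~\ref{AP3}, which needs the extra room afforded by $n\geq 5$. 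The cleaner argument above, by contrast, only uses that $\St(\rA_{n-1},-)$ admits the presentation of Proposition~\ref{SwanP}.
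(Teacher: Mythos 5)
Your reduction has two genuine gaps, and together they amount to assuming the lemma rather than proving it. (Note also that the paper itself does not prove this statement: its proof is a citation of Tulenbaev's Lemma~2.3, so that is the argument yours must replace.) The first gap is the descent step you yourself flag as ``the hard part''. To show that $\Ker(q^*)$ acts trivially on $G^{\mathrm{rel}}$ you argue that $x_\beta(k)$, $k\in\Ker(q)$, fixes every $y_\alpha(s)$ ``because $k\cdot s=0$''. That is correct only for $\beta\neq-\alpha$, where relations \eqref{R4}--\eqref{R5} express the action through the product $ks$; relation \eqref{R6} is also unavailable here, since it requires the conjugating parameter to lie in the ideal, whereas $k\in R\times 0$. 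For $\beta=-\alpha$ the element $y_\alpha(s)^{x_{-\alpha}(k)}=z_\alpha(s,k)$ is a genuinely new element of the Swan presentation, and the needed identity $z_\alpha(s,k)=y_\alpha(s)$ for $ks=0$ is a Dennis--Stein-type relation: it holds after projecting to $\E(n,\cdot)$, but in the (relative) Steinberg group it is exactly the kind of relation whose validity constitutes the excision property of Remark~\ref{excision}---which, as the paper notes, is essentially equivalent to the lemma being proved. Any attempt to derive it formally (e.g.\ writing $y_\alpha(\pm s)$ via \eqref{R5} as $y_{\alpha_1}(-s)\,y_{\alpha_1}(s)^{x_{\alpha_2}(1)}$ with $\alpha=\alpha_1+\alpha_2$ and commuting $x_{-\alpha}(k)$ past $x_{\alpha_2}(1)$) regenerates a conjugation by $x_{-\alpha_1}(\pm k)$ at an opposite root, so the relation-chase is circular.

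The second gap is independent and fatal even if the descent were granted: $q$ is not surjective, hence neither is $q^*$, contrary to the two-headed arrow in your proposal. Indeed $\Img(q)=\lambda_a(R)+tR_a[t]$, which misses constants such as $a^{-1}$; correspondingly $x_\alpha(a^{-1})\notin\Img(q^*)$ (evaluate $t\mapsto0$ and project to $\E(n,R_a)$: the image of $\Img(q^*)$ consists of matrices with entries in the subring $\lambda_a(R)$). So your construction yields at best an action of the proper subgroup $\Img(q^*)\subsetneq\St(\rA_{n-1},R_a[t])$ on $G^{\mathrm{rel}}$, and the universal property of Proposition~\ref{SwanP} cannot be invoked: it requires an action of the full group $\St(\rA_{n-1},R_a[t])$, and without one $T$ is not even defined on generators $z_\alpha(s,\xi)$ whose parameter $\xi$ carries denominators in its constant coefficient. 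Supplying exactly this extension---how elements ``with denominators'' act on ideal-level data---is the real content of Tulenbaev's proof; it is achieved with the another-presentation generators $X_{v,w}$, $v\in\E(n,R_a[t])\cdot e_1$ (Definition~\ref{TulDef}), where the column $v$ absorbs denominators while the row $w$ stays in the ideal, and that is where $n\geq5$ is consumed. Your closing paragraph correctly locates this difficulty but then sets it aside; as it stands, both the descent and the extension of the action are unproven, and jointly they are the lemma itself.
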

\begin{proof} See~\cite[Lemma~2.3]{T}. \end{proof}

\begin{rem}\label{excision} It is not hard to show that the image of $T$ lies inside $\St(n, R\ltimes tR_a[t], tR_a[t])$ interpreted as a subgroup of $\St(n, R\ltimes tR_a[t])$ by virtue of Lemma~\ref{relSubgr}.
Moreover, it can be shown that $T$ defines an isomorphism of $\St(n, R_a[t], tR_a[t])$ and $\St(n, R\ltimes tR_a[t], tR_a[t])$ with the inverse map induced by $\lambda_a\colon R\ltimes tR_a[t]\rightarrow R_a[t]$.
This observation allows one to interpret Lemma~\ref{Tmap} as a special case of excision property for Steinberg groups.\end{rem}

\subsection{Relative Steinberg groups as amalgams}
Below we always assume that $\Phi$ is an irreducible simply laced root system of rank $\geq 3$.

The main purpose of the current section is to demonstrate the following relative analogue of Lemma~\ref{absPres}.
\begin{thm}\label{relPres} The group $\St(\Phi, R, I)$ is isomorphic to $G:=\varinjlim_{\mathcal{G}_{1,3}}\St(-, R, I)$ as an abstract group. \end{thm}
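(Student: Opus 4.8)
The plan is to mimic the proof of Lemma~\ref{absPres}, replacing the absolute Steinberg groups by relative ones and using the Swan-type presentation of Proposition~\ref{SwanP} in place of the naive generators-and-relations description. The first step is to unwind the colimit $G=\varinjlim_{\mathcal{G}_{1,3}}\St(-, R, I)$. Just as in Lemma~\ref{absPres}, $G$ can be interpreted as the free product of the groups $\St(\Psi, R, I)$, $\Psi\in A_3(\Phi)$, modulo the relations that identify, for each root $\alpha\in\Phi$, the images of the rank-one pieces $\St(\langle\{\alpha\}\rangle, R, I)$ under all the inclusions $i_{\alpha,\Psi}$ with $\Psi\in A_3(\Phi,\alpha)$. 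Concretely this means $G$ is generated by symbols $y_\alpha(s)$, $\alpha\in\Phi$, $s\in I$, together with an action of the relative groups $\St(\Psi, R)$ coming from each $\Psi$; I would first record that these local actions are compatible on overlaps, so that by Lemma~\ref{absPres} they glue to a single action of $\St(\Phi, R)=\varinjlim\St(-,R)$ on $G$.

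The heart of the argument is then to compare generators and relations on both sides. The canonical maps $\St(\Psi, R, I)\to\St(\Phi, R, I)$ induced by functoriality (Lemma~\ref{funct} and the functoriality of Definition~\ref{relSt}) assemble into a map $\Theta\colon G\to \St(\Phi, R, I)$; conversely, using the universal property furnished by Proposition~\ref{SwanP}, I would construct a map $\Psi_0\colon\St(\Phi, R, I)\to G$ by sending each $y_\alpha(s)$ to its image in $G$ and checking that the $\St(\Phi, R)$-action on $G$ described above satisfies relations~\eqref{R1}--\eqref{R6}. Relations~\eqref{R1}--\eqref{R5} are purely ``local'': each one involves at most two roots $\alpha,\beta$ with $\alpha+\beta\in\Phi$ or $\alpha\perp\beta$, and by Lemma~\ref{rpLemma} any such pair already lies in a common $\Psi\in A_3(\Phi)$, so the relation holds inside the corresponding piece $\St(\Psi, R, I)$ and hence in $G$. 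Once $\Theta$ and $\Psi_0$ are seen to be mutually inverse on generators, the theorem follows.

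The main obstacle is relation~\eqref{R6}, the Peiffer-type conjugation identity $y_\alpha(s)^{g\cdot x_\beta(t)} = y_\beta(-t)\cdot y_\alpha(s)^g\cdot y_\beta(t)$, together with verifying that the action of an arbitrary $g\in\St(\Phi, R)$ is well-defined on $G$. By Remark~\ref{R62} it suffices to treat $g=x_{-\alpha}(\xi)$, which localizes the difficulty: the nontrivial point is that $y_\alpha(s)$ and $y_\beta(t)$ may involve roots $\alpha,\beta$ that do \emph{not} lie in a common $\rA_3$-subsystem once we also carry along the auxiliary root $-\alpha$ from the conjugating element. Here the refined overlap statement of Lemma~\ref{a2pc} (and its $\rA_3$-specialization) is what I expect to do the work: it guarantees that any two relevant $\rA_3$-subsystems through $\alpha$ can be connected through a chain whose consecutive intersections contain an $\rA_2$, so that the identity can be propagated from one piece to another. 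I would therefore expect to spend most of the effort showing that relation~\eqref{R6} is a formal consequence of the \emph{local} relations~\eqref{R1}--\eqref{R5} holding in each $\St(\Psi, R, I)$, using Lemma~\ref{rpLemma} and Lemma~\ref{a2pc} to reduce every instance to a computation inside a single rank-three subsystem.
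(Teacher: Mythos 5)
Your overall skeleton --- two mutually inverse maps, with $G \to \St(\Phi,R,I)$ coming from functoriality and the inverse built from the universal property of Proposition~\ref{SwanP}, relations~\eqref{R1}--\eqref{R5} being checked inside single factors via Lemma~\ref{rpLemma}, and Remark~\ref{R62} used to localize relation~\eqref{R6} --- matches the paper's. But there is a genuine gap at the step you dispose of in one sentence: ``these local actions are compatible on overlaps, so that by Lemma~\ref{absPres} they glue to a single action of $\St(\Phi,R)$ on $G$.'' A priori each $\St(\Psi,R)$ acts only on its \emph{own} factor $\St(\Psi,R,I)$, not on $G$; an automorphism of one free factor of an amalgam does not extend to the amalgam by itself. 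To invoke the colimit presentation of $\St(\Phi,R)$ you need genuine homomorphisms $\St(\Psi,R)\to\mathrm{Aut}(G)$, i.e.\ you must first say how a generator $x_\alpha(\eta)$ acts on a factor $\St(\Psi',R,I)$ when $\alpha\notin\Psi'$ and $\alpha\not\perp\Psi'$. You cannot do this by prescribing the effect on the generators $z_\beta(s,\xi)$ (say, by the formulas of Lemma~\ref{Zrels}) and then checking that relations are preserved, because the defining relations of $\St(\Psi',R,I)$ \emph{as an abstract group} are exactly what is unknown --- the paper stresses this immediately after stating the theorem. Lemma~\ref{a2pc} and chains of $\rA_3$'s can only show that two \emph{candidate} definitions agree on identified generators; they cannot produce a definition on a whole factor, so the existence question is untouched by your third paragraph.

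The missing idea, which is the actual core of the paper's proof, is an explicit construction that is well defined for free: for $\alpha\notin\Psi$, $\alpha\not\perp\Psi$, the closed set generated by $\Psi$ and $\alpha$ is parabolic in a subsystem of type $\rA_4$ or $\rD_4$, with special part $\Sigma\ni\alpha$; the subgroup $\StU(\Sigma,I)\le G$ generated by the $y_\gamma(s)$, $\gamma\in\Sigma$, is abelian, isomorphic to $V_I=I^{|\Sigma|}$ via a map $\psi$; and conjugation of $\E(\Psi,R)$ on the unipotent radical gives a representation $\rho$ on $V=R^{|\Sigma|}$. The paper then \emph{defines} $c^\Psi_\alpha(\eta)(g):=\psi\bigl(\rho'(g)(\eta e_\alpha)-\eta e_\alpha\bigr)\cdot j_\Psi(g)$ (formula~\ref{defc2}); since $\rho'$ and $j_\Psi$ are homomorphisms already defined on the group $\St(\Psi,R,I)$, no relation checking is needed. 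One then verifies this is a homomorphism, glues the maps $c^\Psi_\alpha(\eta)$ over $\Psi$ (Lemma~\ref{glue_c} --- this is where Lemma~\ref{a2pc} genuinely enters), and checks the Steinberg relations among the resulting maps $c_\alpha(\eta)$, which is what produces the $\St(\Phi,R)$-action; only after that does the short argument via Proposition~\ref{SwanP} close the proof. A smaller inaccuracy in your plan: once Remark~\ref{R62} is applied, relation~\eqref{R6} involves only the roots $\pm\alpha,\beta$, hence lies in a single $\rA_3$ by Lemma~\ref{rpLemma} and holds in the corresponding factor; so~\eqref{R6} is not where the effort goes --- constructing the action is.
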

Morally speaking, the above result should be considered as an indirect description of the set of relations between generators $z_\alpha(s, \xi)$ of the relative group $\St(\Phi, R, I)$.
The direct description of the set of relations between $z_\alpha(s, \xi)$ (defining $\St(\Phi, R, I)$ as an abstract group, not as a $\St(\Phi, R)$-group) is currently unknown.
Theorem~\ref{relPres} asserts that such set can be obtained by joining the sets of relations for subsystems $\St(\Psi, R, I)$, $\Psi\in A_3(\Phi)$.

The proof Theorem~\ref{relPres} is somewhat trickier than that of Lemma~\ref{absPres}.
The reason for this is that $\St(\Phi, R, I)$ comes with the natural action of $\St(\Phi, R)$ while a priori there is no such action on $G$.
Our immediate goal is to construct this action explicitly. The proof of Theorem~\ref{relPres} occupies the rest of the section.

Denote by $j_\Psi$ the canonical map $\St(\Psi, R, I)\rightarrow G$.
There is a map $i:G \rightarrow \St(\Phi, R, I)$ induced by $i_\Psi\colon \St(\Psi, R, I)\rightarrow \St(\Phi, R, I)$, i.\,e. $i_\Psi=i \cdot j_\Psi$. 
To help distinguish between elements of different factors of the amalagamated product $G$ we write $z_\alpha^{\Psi}(s, \xi)$ whenever we want to emphasize the fact that $z_\alpha^{\Psi}(s, \xi)$ lies in $\St(\Psi, R, I)$
for a particular $\Psi\in A_3(\Phi, \alpha)$.
Notice that by the definition of $G$ for $\alpha\in \Phi$, $\Psi_0, \Psi_1\in A_3(\Phi, \alpha)$ we have $j_{\Psi_0}(z_\alpha^{\Psi_0}(s, \xi)) = j_{\Psi_1}(z_\alpha^{\Psi_1}(s, \xi))$, $s\in I$, $\xi\in R$.

\begin{lemma} Let $\Psi\subseteq \Phi$ be a subsystem of type $\rA_3$, let $\alpha\in\Phi$ be a root.
For $\eta\in R$ there exists a map $c^\Psi_\alpha(\eta)\colon \St(\Psi, R, I)\rightarrow G$ modeling conjugation with $x_\alpha(\eta)$ in $\St(\Phi, R, I)$,
i.e. such that the following diagram commutes:
\[\begin{xymatrix}{\St(\Psi, R, I) \ar[r]^{c^\Psi_\alpha(\eta)} \ar[d]_{i_\Psi} & G \ar[d]^{i} \\
\St(\Phi, R, I) \ar[r]^{(-)^{x_\alpha(\eta)}} & \St(\Phi, R, I).} \end{xymatrix}\]
\end{lemma}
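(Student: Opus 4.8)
The plan is to construct the map $c^\Psi_\alpha(\eta)$ on the level of generators and then verify that it respects the defining relations of $\St(\Psi, R, I)$, so that it descends to a well-defined group homomorphism. The target $G = \varinjlim_{\mathcal{G}_{1,3}}\St(-, R, I)$ should be thought of as a group generated by elements $z_\beta^{\Theta}(s, \xi)$ for $\beta\in\Theta$, $\Theta\in A_3(\Phi)$, subject to the relations holding inside each factor $\St(\Theta, R, I)$ together with the amalgamation identities $j_{\Theta_0}(z_\beta^{\Theta_0}(s,\xi)) = j_{\Theta_1}(z_\beta^{\Theta_1}(s,\xi))$ for $\beta\in\Theta_0\cap\Theta_1$. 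First I would specify, for each generator $z_\beta(s, \xi)$ of $\St(\Psi, R, I)$ (with $\beta\in\Psi$, $s\in I$, $\xi\in R$), the element of $G$ that should represent its conjugate by $x_\alpha(\eta)$. The natural recipe is to read off the conjugation formulas from Lemma~\ref{Zrels}: when $\alpha\perp\beta$ we set the image to be $z_\beta(s,\xi)$ (viewed in $G$); when $\alpha+\beta\in\Phi$ or $\alpha = -\beta$, the relations \eqref{Z1}--\eqref{Z3} express $z_\beta(s,\xi)^{x_\alpha(\eta)}$ as a product of $z$- and $y$-symbols whose roots all lie in the rank-$2$ subsystem $\langle\{\alpha,\beta\}\rangle$, and we take the corresponding product in $G$.

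The essential point, and the place where the hypotheses really enter, is that every symbol appearing on the right-hand side of these conjugation formulas involves roots lying in a fixed subsystem of $\Phi$ of rank at most $2$, hence sits inside some common $\rA_3$-subsystem by Lemma~\ref{rpLemma}. Thus each individual image can be written in a single factor $\St(\Theta, R, I)$ of $G$, and the amalgamation relations guarantee that it does not matter which such $\Theta$ we pick. Concretely, I would first define $c^\Psi_\alpha(\eta)$ on the free group on the generating set $\{z_\beta(s,\xi)\}$ by the above formulas, landing in $G$, and then check that the four families of defining relations of $\St(\Psi, R, I)$ — that is, the relations coming from Proposition~\ref{SwanP} specialized to $\Psi$, equivalently the relations \eqref{Z1}--\eqref{Z4} of Lemma~\ref{Zrels} together with the $\rA_3$-relations among the $z_\beta(s,\xi)$ — are sent to relations that already hold in $G$.

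The main obstacle will be the last verification: confirming that the defining relations of $\St(\Psi, R, I)$ survive under $c^\Psi_\alpha(\eta)$. This is where a genuine calculation is needed, because $x_\alpha(\eta)$ need not normalize $\Psi$, so conjugation can move a relation supported on $\Psi$ into a relation whose constituent symbols are spread across several different $\rA_3$-factors of $G$. The crucial structural fact that makes this tractable is that, for any relation of $\St(\Psi, R, I)$, all the roots occurring after conjugation by $x_\alpha(\eta)$ lie in the subsystem $\langle \Psi\cup\{\alpha\}\rangle$, which has rank at most $4$; each relevant triple or pair of these roots is contained in a common $\rA_3$ by Lemma~\ref{rpLemma}, and one uses the amalgamation identities to transport the verification into a single $\St(\Theta, R, I)$, where it reduces to the already-established relations of Lemma~\ref{Zrels} and the Steinberg calculus of \eqref{Rcf21}--\eqref{Rcf22}.

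Once $c^\Psi_\alpha(\eta)$ is shown to be well-defined, commutativity of the stated square is immediate on generators: applying $i$ to $c^\Psi_\alpha(\eta)(z_\beta(s,\xi))$ recovers exactly the product prescribed by Lemma~\ref{Zrels}, which is $i_\Psi(z_\beta(s,\xi))^{x_\alpha(\eta)}$ by the very definition of those relations in $\St(\Phi, R, I)$. Hence $i\circ c^\Psi_\alpha(\eta) = (-)^{x_\alpha(\eta)}\circ i_\Psi$ holds on generators and therefore on all of $\St(\Psi, R, I)$, completing the construction.
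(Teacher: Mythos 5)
Your construction has a genuine gap at exactly the step you yourself flag as ``the main obstacle.'' You propose to define $c^\Psi_\alpha(\eta)$ on the free group on the symbols $z_\beta(s,\xi)$ and then verify that it kills ``the defining relations of $\St(\Psi, R, I)$,'' which you identify as the relations \eqref{Z1}--\eqref{Z4} of Lemma~\ref{Zrels} together with ``the $\rA_3$-relations among the $z_\beta(s,\xi)$,'' claiming these are equivalent to Proposition~\ref{SwanP}. No such presentation is available. Proposition~\ref{SwanP} presents $\St(\Psi, R, I)$ as a $\St(\Psi, R)$-\emph{group}: its generating set consists of all conjugates $y_\beta(s)^g$, $g\in\St(\Psi,R)$, and its relations involve the ambient action; it does not specialize to a presentation of $\St(\Psi, R, I)$ as an abstract group on the $z$-symbols. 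The paper is explicit on this point immediately after Theorem~\ref{relPres}: a direct description of the relations among the $z_\beta(s,\xi)$ ``is currently unknown,'' and Theorem~\ref{relPres} is intended precisely as a substitute for it. Lemma~\ref{Zrels} records \emph{some} relations that hold, not a defining set, so checking that your assignment preserves them does not make the map well defined. A secondary problem: the amalgam $G$ glues its $\rA_3$-factors only along rank-one subgroups, so ``transporting'' a verification into a single factor $\St(\Theta, R, I)$ is itself nontrivial --- that is the delicate content of Lemma~\ref{glue_c} and Remark~\ref{Stronger}, which in the paper come \emph{after}, and make use of, the already-constructed homomorphisms $c^\Psi_\alpha(\eta)$.

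The paper's proof avoids any presentation of the source. In the easy cases ($\alpha\perp\Psi$ or $\alpha\in\Psi$) it sets $c^\Psi_\alpha(\eta)=j_\Psi$ or $j_\Psi\circ(-)^{x_\alpha(\eta)}$, as in~\ref{defc1}. In the essential case it forms the parabolic set $S=\Psi\sqcup\Sigma$ inside $\Psi'=\langle\Psi\cup\{\alpha\}\rangle$ (of type $\rA_4$ or $\rD_4$), shows that the subgroup $\StU(\Sigma,I)\leq G$ generated by the $y_\gamma(s)$, $\gamma\in\Sigma$, is abelian and isomorphic via $\psi$ to $V_I=I^{|\Sigma|}$, and uses the representation $\rho$ of $H=\E(\Psi,R)$ on the unipotent radical $V\cong R^{|\Sigma|}$. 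Setting $\rho'=\rho\phi$, it then defines the map by the closed formula
\[ c^\Psi_\alpha(\eta)(g) := \psi\bigl(\rho'(g)(\eta e_\alpha)-\eta e_\alpha\bigr)\cdot j_\Psi(g), \]
which is well defined on every element $g$ at once, because $\rho'$ and $j_\Psi$ are honest group homomorphisms on all of $\St(\Psi, R, I)$. The only verifications needed are the intertwining identity~\eqref{reprRels} --- which cuts out a subgroup of $\St(\Psi,R,I)$, so checking it on generators is legitimate --- and a short computation showing multiplicativity of $c^\Psi_\alpha(\eta)$. This cocycle-style formula, trading generator-by-generator definitions for a globally defined expression built from existing homomorphisms, is the key device your proposal is missing; without it, or some comparable way of bypassing an abstract presentation, the well-definedness step cannot be completed.
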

\begin{proof}
The cases when $\alpha\perp\Psi$ or $\alpha\in\Psi$ are obvious, indeed, set
\begin{equation}\label{defc1}
 c^{\Psi}_\alpha(\eta) =
  \begin{cases} 
       j_\Psi    \hfill & \text{ if } \alpha\perp\Psi, \\
       j_\Psi \circ (-)^{x_{\alpha}(\eta)} \hfill & \text{ if } \alpha\in\Psi. \\
  \end{cases} \end{equation}

Now consider the case $\alpha\not\perp\Psi$, $\alpha\not\in\Psi$. Denote by $S$ the closed subset of roots generated by $\alpha$ and $\Psi$.
Let $\Psi'$ be the minimal root subsystem of $\Phi$ containing $S$. Clearly, $S$ is a parabolic subset of $\Psi'$.
One can write $S = \Psi\sqcup\Sigma$, where $\Sigma\ni\alpha$ is the special part of $S$.
It is easy to see that $\Psi'$ has type $\rA_4$ or $\rD_4$ and that $\Sigma$ consists of $4$ or $6$ roots, respectively.
 
Denote by $\StU(\Sigma, I)$ the subgroup of $G$ spanned by all $y_\gamma(s)$, $\gamma\in\Sigma$, $s\in I$.
It is not contained in a single factor of rank $3$ however it is easy to show that $\StU(\Sigma, I)$ is an abelian group isomorphic to $V_I := I^{|\Sigma|}$.
Indeed, by Lemma~\ref{rpLemma} for every $\gamma_1,\gamma_2\in\Sigma$ the root subgroups $X_{\gamma_1}(I)$, $X_{\gamma_2}(I)$ are contained in 
$j_{\Psi''}(\St(\Psi'', R, I))$ for some $\Psi''\in A_3(\Phi)$ and hence commute by Lemma~\ref{Zrels}.
Denote by $\psi$ the isomorphism $V_I \xrightarrow{\cong} \StU(\Sigma, I)$.
 
Set $H := \E(\Psi, R)=\langle t_\alpha(\xi) \mid \alpha \in \Psi, \xi\in R \rangle \leq \GG(\Phi, R)$ and denote by $U$ the unipotent radical subgroup of $\GG(S, R)$, i.e. the subgroup spanned by $t_{\alpha}(\xi)$, $\alpha\in\Sigma$, $\xi\in R$.
Denote by $V$ the free $R$-module $R^{|\Sigma|}$ isomorphic to $U$ and by $e_\alpha$ the basis vector of $V$ corresponding to the root subgroup $X_\alpha(R) \leq U$ under this isomorphism.
The conjugation action of $H$ on $U$ yields a representation $\rho\colon H\rightarrow \mathrm{Aut}_R(V)$. 
By the definition of $\rho$ for $h\in H$ one has
\begin{equation}\nonumber h\cdot u = \rho(h)(u)\cdot h,\ \text{for}\ u\in V_I\leq V.\end{equation}
 
The next step is to show that an analogue of the above relation holds in $G$. 
Set $\rho' := \rho\phi$, where $\phi$ denotes the canonical map $\St(\Psi,R, I)\rightarrow \E(\Psi, R, I)\leq H$.
We argue that for $g\in \St(\Psi, R, I)$ one has
\begin{equation}\label{reprRels} \begin{tabular}{c} $j_\Psi(g)\cdot \psi(u) = \psi(\rho'(g)(u))\cdot j_\Psi(g),\ \text{for}\ u\in V_I\leq V.$\end{tabular}\end{equation}
Indeed, it suffices to check these relations only for generators of $\St(\Psi, R, I)$ of the form $g=z_\beta(a,\xi)$, $u = s\cdot e_\gamma$, $s\in I$, $\gamma\in\Sigma$.
Under these assumptions relation~\ref{reprRels} has the same form as relations \ref{Z2}--\ref{Z3} of Lemma~\ref{Zrels} and by Lemma~\ref{rpLemma} it can be found among the relations of some $\St(\Psi'', R, I)$, $\Psi''\in A_3(\Phi)$.  
Now we are ready to define the desired map $c^\Psi_\alpha(\eta)$. For $g\in\St(\Psi, R, I)$ set
\begin{equation}\label{defc2} c^\Psi_\alpha(\eta)(g) := \psi(\rho'(g)(\eta e_\alpha) - \eta e_\alpha) \cdot j_\Psi(g) \in G. \end{equation}
This formula agrees with relations~\eqref{Z2}--\eqref{Z3} from Lemma~\ref{Zrels}.
Moreover, from~\ref{reprRels} it follows that $c^\Psi_\alpha(\eta)$ is a group homomorphism:
\begin{multline}\nonumber c^\Psi_\alpha(\eta)(g_1)\cdot c^\Psi_\alpha(\eta)(g_2) = \psi(\rho'(g_1)(\eta e_\alpha) - \eta e_\alpha) \cdot j_\Psi(g_1) \cdot \psi(\rho'(g_2)(\eta e_\alpha) - \eta e_\alpha) \cdot j_\Psi(g_2) =  \\ =
\psi(\rho'(g_1)(\eta e_\alpha) - \eta e_\alpha) \cdot \psi(\rho'(g_1\cdot g_2)(\eta e_\alpha) - \rho'(g_1)(\eta e_\alpha)) \cdot j_\Psi (g_1) \cdot j_\Psi(g_2) = c^\Psi_\alpha(\eta)(g_1\cdot g_2). \qedhere \end{multline} \end{proof}
 
\begin{lemma}\label{glue_c} Let $\alpha, \beta\in \Phi$ be roots and let $\Psi_0, \Psi_1$ be two subsystems from $A_3(\Phi, \beta)$.
 Set $c_0 := c^{\Psi_0}_\alpha(\eta)$ and $c_1 := c^{\Psi_1}_\alpha(\eta)$.
 Then $c_0(z^{\Psi_0}_\beta(s, \xi)) = c_1(z^{\Psi_1}_\beta(s, \xi))$ for $s\in I$, $\xi\in R$. \end{lemma}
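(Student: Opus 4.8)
The plan is to compute $c_i(z^{\Psi_i}_\beta(s,\xi))$ explicitly for $i=0,1$ and to verify that both expressions reduce to one and the same element of $G$, depending only on $\alpha$, $\beta$, $s$, $\xi$, $\eta$ and the globally fixed structure constants, but not on the ambient subsystem $\Psi_i$. The starting observation is that $z_\beta(s,\xi)$ only involves the roots $\pm\beta$, hence is the image of a generator of the rank-one piece $\St(\langle\beta\rangle,R,I)$; by the very definition of $G$ as the colimit over $\mathcal{G}_{1,3}$ its images already agree, i.e. $j_{\Psi_0}(z^{\Psi_0}_\beta(s,\xi))=j_{\Psi_1}(z^{\Psi_1}_\beta(s,\xi))$ in $G$. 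Thus everything comes down to matching the ``correction'' factors produced by the three defining branches of $c^\Psi_\alpha(\eta)$.

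First I would dispose of the degenerate configurations. If $\alpha\perp\Psi_i$ then $\alpha\perp\beta$ and, by~\eqref{defc1}, $c_i(z^{\Psi_i}_\beta(s,\xi))=j_{\Psi_i}(z_\beta(s,\xi))$ with no correction. If $\alpha\in\Psi_i$ then~\eqref{defc1} gives $j_{\Psi_i}\bigl(z_\beta(s,\xi)^{x_\alpha(\eta)}\bigr)$, where the conjugate is evaluated inside $\St(\Psi_i,R,I)$ by Lemma~\ref{Zrels}: depending on the angle between $\alpha$ and $\beta$ this yields the trivial correction (case $\alpha\perp\beta$, relation~\eqref{Z4}), or the product of shared generators $y_\alpha(-s\xi\eta)\,y_{\alpha+\beta}(N_{\beta,\alpha}s\eta)$ (case $\alpha+\beta\in\Phi$, relation~\eqref{Z2}), or the analogous product dictated by~\eqref{Z3} (case $\alpha-\beta\in\Phi$). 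In every subcase the correction involves only the roots $\alpha$ and $\alpha\pm\beta$ of $\langle\alpha,\beta\rangle$, with coefficients read off from $s,\xi,\eta$ and the fixed sign $N_{\beta,\alpha}$.

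For the generic configuration $\alpha\not\perp\Psi_i$, $\alpha\notin\Psi_i$, definition~\eqref{defc2} gives $c_i(z_\beta(s,\xi))=\psi_i\bigl(\rho'(z_\beta(s,\xi))(\eta e_\alpha)-\eta e_\alpha\bigr)\cdot j_{\Psi_i}(z_\beta(s,\xi))$. Here I would use that $c^\Psi_\alpha(\eta)$ was built precisely so as to agree with relations~\eqref{Z2}--\eqref{Z3}: relation~\eqref{reprRels}, specialized to $g=z_\beta(s,\xi)$ and $u=\eta e_\alpha$, has exactly the shape of~\eqref{Z2}--\eqref{Z3}, so the vector $\rho'(z_\beta(s,\xi))(\eta e_\alpha)-\eta e_\alpha$ is supported on the basis vectors $e_\alpha$ and $e_{\alpha+\beta}$ (respectively $e_{\alpha-\beta}$) with the very same coefficients as above. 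Since $\alpha$ and $\alpha\pm\beta$ both lie in the special part $\Sigma_i$ of the parabolic set $S_i$ generated by $\alpha$ and $\Psi_i$ (adding a reductive root of $\Psi_i$ to the special root $\alpha$ keeps it special), the isomorphism $\psi_i$ carries this vector to the shared element $y_\alpha(-s\xi\eta)\,y_{\alpha+\beta}(N_{\beta,\alpha}s\eta)$ of $G$ (or its~\eqref{Z3}-analogue). Hence, for both $i=0,1$ and whichever of the three branches applies to each $\Psi_i$, the value $c_i(z^{\Psi_i}_\beta(s,\xi))$ equals the product of this $\Psi$-independent correction with the $\Psi$-independent factor $j_{\Psi_i}(z_\beta(s,\xi))$, which is the assertion of the lemma.

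The point requiring the most care is the bookkeeping that reconciles the three distinct defining branches of $c^\Psi_\alpha(\eta)$: the root $\alpha$ may sit differently relative to $\Psi_0$ and $\Psi_1$ (for instance $\alpha\in\Psi_0$ while $\alpha\notin\Psi_1$), so one must confirm that all branches, evaluated on the single generator $z_\beta(s,\xi)$, produce the identical global expression, and in particular that the correction roots $\alpha,\alpha\pm\beta$ genuinely lie in the relevant $\Sigma_i$ so that their $\psi_i$-images really are the shared $y$-generators. The essential reason the values coincide is that each of these computations is entirely local to the rank-two subsystem $\langle\alpha,\beta\rangle$ and invokes only the signs fixed once and for all in Lemma~\ref{funct} together with the shared rank-one generators of $G$; none of this data can detect the surrounding $\rA_3$-subsystem, which is exactly what forces $c_0$ and $c_1$ to agree.
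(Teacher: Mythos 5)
Your case analysis has a genuine gap: it silently assumes $\alpha\neq\pm\beta$. You enumerate the possible configurations of $\alpha$ against $\beta$ as perpendicular, $\alpha+\beta\in\Phi$, or $\alpha-\beta\in\Phi$, and in those cases Lemma~\ref{Zrels} does express $z_\beta(s,\xi)^{x_\alpha(\eta)}$ through shared generators, so your matching argument works (the omitted case $\alpha=-\beta$ is also harmless, since relation~\eqref{Z1} gives the shared generator $z_\beta(s,\xi+\eta)$). But the case $\alpha=\beta$ is allowed, and then automatically $\alpha\in\Psi_0\cap\Psi_1$, so both $c_0$ and $c_1$ are given by the second branch of~\eqref{defc1}; here Lemma~\ref{Zrels} contains \emph{no} formula for the conjugate: relations \eqref{Z1}--\eqref{Z4} cover conjugation by $x_{-\beta}$, by $x_{\pm\gamma}$ with $\gamma\pm\beta\in\Phi$, and by $x_\gamma$ with $\gamma\perp\beta$, but never by $x_\beta$ itself. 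For $\xi\neq 0$ the element $z_\beta(s,\xi)^{x_\beta(\eta)}$ is computed internally to each factor $\St(\Psi_i,R,I)$, and there is no a priori reason its two images in the colimit $G$ coincide; your assertion that ``in every subcase'' the correction is supported on $\alpha$ and $\alpha\pm\beta$ simply fails here.

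This missing case is the heart of the paper's proof. The paper handles $\alpha=\beta$, $\xi\neq 0$ in two steps: if $\Lambda:=\Psi_0\cap\Psi_1$ has type $\rA_2$, it picks a parabolic set $S\subseteq\Lambda$ not containing $\beta$ and uses the second part of Lemma~\ref{titsLemma} to rewrite $z_\beta(s,\xi)$, inside $\St(\Lambda,R,I)$, as a product of generators $x_\gamma(s_0)$ and $z_\delta(s_1,\xi_1)$ attached to roots $\delta\neq\pm\beta$, to each of which the unproblematic cases apply; and if $\Lambda$ only has type $\rA_1$ or $\rA_1+\rA_1$, it invokes Lemma~\ref{a2pc} to interpose a third subsystem $\Psi\in A_3(\Phi,\beta)$ whose intersections with $\Psi_0$ and $\Psi_1$ have type $\rA_2$, and compares through it. It is telling that your proposal never uses Lemma~\ref{a2pc}, which was proved in the paper precisely for this purpose. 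The rest of your proposal (the colimit identification of the $j_{\Psi_i}$-images, the matching of the generic branch~\eqref{defc2} against the \eqref{Z2}--\eqref{Z3} corrections, and the check that $\alpha$, $\alpha\pm\beta$ lie in the special part $\Sigma_i$) does agree with the paper's argument, but without the $\alpha=\beta$ case the proof is incomplete.
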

\begin{proof}
In the case $\alpha \perp \beta$ the statement of the lemma follows from the definition of $c_\alpha^\Psi(\eta)$ by case analysis (see~\ref{defc1},~\ref{defc2}).
Thus, in what follows we can assume $\alpha\not\perp\beta$.

In the case $\alpha\not\in\Psi_1\cup\Psi_2$ both $c_0$ and $c_1$ are defined by identity~\ref{defc2} 
and the required statement follows from the fact that this definition is based on the relations inside $\G(\Phi, R)$.

Now consider the case $\alpha\in\Psi_1\setminus\Psi_0$. 
We have that $\alpha\neq\pm\beta$, $\alpha\not\perp\Psi_0$ and hence either $\alpha+\beta \in \Phi$ or $\alpha-\beta \in \Phi$, suppose, for example, the former.
Expanding the definition of $c_0$ and $c_1$ and using relation~\eqref{Z2} of Lemma~\ref{Zrels} we get that
\begin{multline}\nonumber c_\alpha^{\Psi_1}(\eta)(z_\beta(s, \xi)) = j_{\Psi_1}(z_\beta(s, \xi)^{x_\alpha(\eta)}) = j_{\Psi_1}(y_{\alpha} (- s\xi \eta) \cdot y_{\alpha+\beta} (N_{\alpha,\beta}\cdot s\eta)) \cdot j_{\Psi_1}(z_{\beta}(s, \xi)) =  \\
= \psi\left(\rho'(z_\beta(s, \xi))(\eta e_\alpha) - \eta e_\alpha\right) \cdot j_{\Psi_0}(z_\beta(s, \xi)) = c_\alpha^{\Psi_0}(\eta)(z_\beta(s, \xi)). \end{multline}
It remains for us to consider the case $\alpha\in\Psi_0\cap \Psi_1$ when both $c_0$ and $c_1$ are defined by the second line of identity~\ref{defc1}.
In this case the required statement follows from identities of Lemma~\ref{Zrels} and Proposition~\ref{SwanP} provided $\beta \neq \alpha$ or if $\xi=0$.
  
Now consider the case when $\alpha = \beta$ and $\Lambda := \Psi_0\cap \Psi_1$ has type $\rA_2$. Choose any parabolic set of roots $S\subseteq \Lambda$ not containing $\beta$.
By the second part of Lemma~\ref{titsLemma} we can rewrite $z_\beta(s, \xi)$ modulo relations of $\St(\Psi_0\cap\Psi_1, R, I)$ as a product of elements of the form $x_\gamma(s_0)$, $\gamma \in \Lambda$, $z_{\delta}(s_1, \xi_1)$, $\delta\in\Sigma_S\subset S$.
By the above argument, the values of $c_0$ and $c_1$ agree on each such factor, hence they agree on $z_\beta(s, \xi)$.
  
In the last remaining case when $\Psi_0\cap\Psi_1$ has type $\rA_1$ or $\rA_1+\rA_1$ we can apply Lemma~\ref{a2pc} and find a subsystem $\Psi$ containing $\beta$ such that
$\Psi_0\cap \Psi$, $\Psi_1\cap \Psi$ have type $\rA_2$. By the previous paragraph we have $c_0(z^{\Psi_0}_\beta(s, \xi))=c^\Psi_\alpha(\eta)(z^{\Psi}_\beta(s, \xi))=c_1(z^{\Psi_1}_\beta(s, \xi))$. \end{proof}

\begin{rem}\label{Stronger} Let $G'$ be a group and let $\{d_\Psi\}_{\Psi\in A_3(\Phi)}$ be a collection of morphisms $\St(\Psi, R, I)\rightarrow G'$ satisfying 
$$d_{\Psi_0}(z^{\Psi_0}_\beta(s, \xi)) = d_{\Psi_1}(z^{\Psi_1}_\beta(s, \xi)),\ \Psi_0, \Psi_1\in A_3(\Phi, \beta).$$
Notice that in this situation we can show that 
$$d_{\Psi_0}\left(\St(i_{\beta, \Psi_0}, R, I)(g)\right) = d_{\Psi_1}\left(\St(i_{\beta, \Psi_1}, R, I) (g)\right)\ \text{for any}\  g\in\St(\langle \{ \beta \} \rangle, R, I).$$

Indeed,if $\Psi_0\cap\Psi_1$ has type $\rA_2$ this follows from the fact that $\St(i_{\alpha, \Psi_1\cap\Psi_0}, R, I)(g)$ can be rewritten as a product of $z_\beta(s, \xi)$, $\beta\in\Psi_0\cap\Psi_1$ by Lemma~\ref{titsLemma}.
Otherwise, we can repeat the same argument as in the last paragraph of the previous lemma.
\end{rem}

From the previous remark, Lemma~\ref{glue_c} and the universal property of $G$ it follows that there exists a unique map $c_\alpha(\eta)\colon G\rightarrow G$ such that $c_\alpha(\eta)j_\Psi = c^\Psi_\alpha(\eta)$. 
 
In order to define the action of $\St(\Phi, R)$ on $G$ it remains to show that $c_\alpha(\eta)$ satisfy Steinberg relations~\ref{Radd}, \ref{Rcf21}, \ref{Rcf22}.
\begin{lemma} The maps $c_\alpha(\eta)$ satisfy the following relations:
\begin{enumerate}
  \item $c_\alpha(\xi) c_\alpha(\eta) c_\alpha(-\xi-\eta) = \mathrm{id}_G,\ \xi,\eta\in R;$
  \item \label{lemrel2} $c_\alpha(\xi) c_\beta(\eta) c_\alpha(-\xi) c_\beta(-\eta)= \mathrm{id}_G,\ \alpha+\beta\notin\Phi,\ \alpha\neq -\beta;$
  \item $c_\alpha(\xi) c_\beta(\eta) c_\alpha(-\xi) c_\beta(-\eta) c_{\alpha+\beta}(-N_{\alpha\beta}\xi \eta) = \mathrm{id}_G,\ \alpha+\beta\in\Phi.$
 \end{enumerate} \end{lemma}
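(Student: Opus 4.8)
The plan is to verify each of the three relations by evaluating both sides on the generators of $G$ and, whenever possible, reducing to a genuine Steinberg relation inside a single factor. Since each $c_\alpha(\eta)$ is an endomorphism of $G$ and $G$ is generated by the elements $z^\Psi_\gamma(s,\xi)$ ($\gamma\in\Phi$, $\Psi\in A_3(\Phi,\gamma)$, $s\in I$, $\xi\in R$), it suffices to check that each relation holds after applying it to one such generator. Moreover, by the colimit description of $G$ and the identity $j_{\Psi_0}(z^{\Psi_0}_\gamma(s,\xi))=j_{\Psi_1}(z^{\Psi_1}_\gamma(s,\xi))$, for a fixed generator I am free to choose the representing subsystem $\Psi\in A_3(\Phi,\gamma)$ most convenient for the relation at hand; by Lemma~\ref{glue_c} the resulting values of the maps $c_\bullet(\eta)$ do not depend on this choice.

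The basic mechanism is as follows. Suppose all roots indexing the maps of a given relation, together with $\gamma$, lie in a common $\Psi\in A_3(\Phi)$. Since $c_\alpha(\eta)\circ j_\Psi=j_\Psi\circ(-)^{x_\alpha(\eta)}$ whenever $\alpha\in\Psi$ (see~\eqref{defc1}), applying the whole word in the $c_\bullet$'s to $j_\Psi(z_\gamma(s,\xi))$ yields $j_\Psi$ of $z_\gamma(s,\xi)$ conjugated by the corresponding word in the $x_\bullet$'s; the latter word is trivial in $\St(\Psi,R)$ by~\eqref{Radd} (relation~(1)), by~\eqref{Rcf21} (relation~(2)), or by~\eqref{Rcf22} together with~\eqref{Radd} (relation~(3)), so the relation holds on this generator. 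For relation~(1) such a $\Psi$ always exists unless $\alpha\perp\gamma$: if $\alpha\not\perp\gamma$ then $\alpha,\gamma$ lie in a common $\rA_3$ by Lemma~\ref{rpLemma}, while if $\alpha\perp\gamma$ every $c_\alpha(\cdot)$ fixes $z_\gamma(s,\xi)$ by~\eqref{Z4} and the relation is trivial. In particular $c_\alpha(0)=\mathrm{id}_G$ and $c_\alpha(\xi)c_\alpha(-\xi)=\mathrm{id}_G$.

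For relations~(2) and~(3) the same argument applies whenever the subsystem $\langle\{\alpha,\beta,\gamma\}\rangle$ is irreducible: being simply laced of rank $\leq 3$, it then has type $\rA_1$, $\rA_2$ or $\rA_3$ and embeds into some $\Psi\in A_3(\Phi)$ containing all roots indexing the relation (extending by Lemma~\ref{rpLemma} if its rank is $<3$). It remains to treat the configurations in which $\langle\{\alpha,\beta,\gamma\}\rangle$ is reducible of rank $3$. Inspecting these, and using that $\alpha+\beta\in\Phi$ forces $\alpha,\beta$ to span an $\rA_2$ in relation~(3) whereas $\alpha+\beta\notin\Phi$ in relation~(2), one finds that $\gamma$ is then orthogonal to at least one of the roots indexing the relation. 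A map $c_\delta(\cdot)$ with $\delta\perp\gamma$ fixes $z_\gamma(s,\xi)$, and moreover fixes all the terms $y_{\delta'}(\cdot)$ produced by the remaining maps, since these have $\delta'\in\langle\{\alpha,\gamma\}\rangle$ (resp.\ $\langle\{\beta,\gamma\}\rangle$), a subsystem to which $\delta$ stays orthogonal. Hence the relation collapses: in relation~(3) all three maps fix $z_\gamma(s,\xi)$, while in relation~(2) the two maps attached to the root orthogonal to $\gamma$ drop out and what remains is an instance of $c_\alpha(\xi)c_\alpha(-\xi)=\mathrm{id}_G$ already established in relation~(1).

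The main point requiring care is precisely this reducible case: one must check that the orthogonalities cause the relevant $c_\delta(\cdot)$ to act as the identity not merely on $z_\gamma(s,\xi)$ but on every intermediate product arising along the composition, which amounts to tracking via~\eqref{defc2} and~\eqref{Z2}--\eqref{Z4} exactly which root subgroups $y_{\delta'}(\cdot)$ occur at each stage and confirming that they remain inside a subsystem orthogonal to the vanishing root. Once the computation is organized by the case analysis on $\langle\{\alpha,\beta,\gamma\}\rangle$ above, each step is a direct application of the relations of Lemma~\ref{Zrels}, and the verification is routine.
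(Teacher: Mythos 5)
Your proposal is correct and takes essentially the same route as the paper: evaluate each word in the $c_\bullet$'s on a generator $z_\gamma(s,\zeta)$, reduce to a genuine Steinberg relation inside a single factor whenever $\langle\{\alpha,\beta,\gamma\}\rangle$ embeds into some $\Psi'\in A_3(\Phi)$, and collapse the non-embeddable reducible configurations ($3\rA_1$ and $\rA_2+\rA_1$) via the orthogonality argument, which is precisely the paper's case analysis. The only slip is that your dichotomy ``irreducible versus reducible of rank $3$'' overlooks the reducible rank-$2$ configurations arising in relation (2) (e.g. $\gamma=\pm\alpha\perp\beta$), but this is harmless: the two roots spanning such an $\rA_1+\rA_1$ lie in a common $\rA_3$ by Lemma~\ref{rpLemma}, so your first mechanism covers these cases verbatim.
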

 \begin{proof}
  Denote by $\theta = \prod\limits c_{\alpha_i}(\xi_i)$ any of the maps in the left-hand side of the above relations.
  It suffices to check that $\theta$ fixes $g = z_\gamma(s, \zeta)\in G$ for all $\gamma\in\Phi$, $s\in I$, $\zeta\in R$.
  Set $\Psi := \langle \{ \alpha, \beta, \gamma \} \rangle$. 
  Clearly, $\rk(\Psi)\leq 3$ and we need to analyze several cases.
  \begin{itemize}
   \item Case $\Psi\subseteq \Psi'\in A_3(\Phi)$. In this case the needed equality holds because it holds in $\St(\Psi', R, I)$, indeed
   $\theta(g) = \theta(j_\Psi(g)) = j_\Psi (g^{\prod\limits x_{\alpha_i}(\xi_i)}) = j_\Psi(g) = 1$ as required.
   \item Case $\Psi = 3\rA_1$. In this case both $c_\alpha(\pm \xi)$ and $c_\beta(\pm \eta)$ fix $g$ by definition.
   \item Case $\Psi = \rA_2 + \rA_1$. If $\gamma\perp\langle \{ \alpha,\beta \} \rangle$ then, as before, $g$ is fixed by all factors of $\theta$.
   If $\alpha\perp\langle \beta, \gamma \rangle$ (only possible in the case of relation~\eqref{lemrel2}) then $c_\alpha(\pm \xi)$ act identically 
   on $z_\delta(a, \zeta)$, $\delta\in\langle \{ \beta, \gamma \} \rangle$, hence $\theta (g) = c_\beta(\eta) c_\beta(-\eta) (g) = g$.
   Case $\beta\perp\langle \{ \alpha, \gamma \} \rangle$ is similar to the one just considered.
  \end{itemize} \end{proof}

\begin{proof}[Proof of Theorem~\ref{relPres}]
From the above lemmata it follows that $G$ is a $\St(\Phi, R)$-group and the map $i\colon G\rightarrow \St(\Phi, R, I)$ is $\St(\Phi, R)$-equivariant.
In order to construct the inverse map we use the presentation of $\St(\Phi, R, I)$ from the statement of Proposition~\ref{SwanP}.
Indeed, every relation from Proposition~\ref{SwanP} holds in $G$ since it occurs among the defining relations of some factor $\St(\Psi, R, I)$ of $G$, $\Psi\in A_3(\Phi)$. \end{proof}

\section {Proof of main results} \label {mainReduction}
Throughout the current section we consider the relative Steinberg group $\St(\Phi, R[t], tR[t])$ as a subgroup of $\St(\Phi, R[t])$ thanks to Lemma~\ref{relSubgr}.

\subsection{Glueing Tulenbaev maps}
The main goal of this subsection is to demonstrate an analogue of Lemma~\ref{Tmap} for $\Phi = \rE_6,\rE_7,\rE_8$.
The idea of the proof is to construct the map $T_\Phi$ as in~\ref{diagT} by glueing maps $T_\Psi$ for $\Psi\in A_4(\Phi)$.
First of all, we note that $\rA_3$'s in the statement of Theorem~\ref{relPres} can be replaced with $\rA_4$'s.
\begin{cor}\label{relPresE} For $\Phi=\rE_\ell$, $\ell=6,7,8$ there exists an isomorphism 
$$\St(\Phi, R, I) \cong \varinjlim_{\mathcal{G}_{1,4}}\St(-, R, I).$$ \end{cor}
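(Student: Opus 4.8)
The plan is to deduce the corollary from Theorem~\ref{relPres} by comparing the two colimits $G_3 := \varinjlim_{\mathcal{G}_{1,3}}\St(-, R, I)$ and $G_4 := \varinjlim_{\mathcal{G}_{1,4}}\St(-, R, I)$ directly, rather than rerunning the entire construction of the $\St(\Phi, R)$-action. Since the corollary only asks for an isomorphism of abstract groups, it suffices to build mutually inverse homomorphisms $f\colon G_3\to G_4$ and $h\colon G_4\to G_3$. I write $j_\Psi$ for the canonical map of a factor into the relevant colimit and $i_\Psi$ for its inclusion into $\St(\Phi, R, I)$; by functoriality of $\St(-,R,I)$ on $\catname{Subsys}(\Phi)$ the inclusions $\Psi\subseteq\Phi$ assemble into natural maps $\iota_3\colon G_3\to\St(\Phi, R, I)$ and $\iota_4\colon G_4\to\St(\Phi, R, I)$, where $\iota_3$ is the isomorphism furnished by Theorem~\ref{relPres}.

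To construct $f$ I would invoke item~\ref{ExtProp} of Lemma~\ref{ext34}: for every $\Psi\in A_3(\Phi)$ choose some $\Psi'\in A_4(\Phi)$ with $\Psi\subseteq\Psi'$ and let $f_\Psi$ be $\St(i_{\Psi,\Psi'}, R, I)$ followed by the canonical map $\St(\Psi', R, I)\to G_4$; on single-root objects I take the tautological maps into $G_4$. The decisive observation is that $\St(\Psi, R, I)$ is generated by the $z^\Psi_\alpha(s,\xi)$ with $\alpha\in\Psi\subseteq\Psi'$, and that in $G_4$ each such generator is identified with the rank-one element $j_{\langle\alpha\rangle}(z_\alpha(s,\xi))$; this identification is precisely the gluing along the objects $\langle\alpha\rangle\in A_1(\Phi)$ built into $\mathcal{G}_{1,4}$. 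Hence $f_\Psi$ is independent of the auxiliary choice of $\Psi'$, and for an inclusion $\langle\gamma\rangle\hookrightarrow\Psi$ the composite $f_\Psi\circ\St(i_{\gamma,\Psi},R,I)$ is again $j_{\langle\gamma\rangle}$, so the $f_\Psi$ form a cocone and descend to $f\colon G_3\to G_4$. Tracing generators gives $\iota_4 f=\iota_3$.

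For $h$ I would simply set $h_{\Psi'} := \iota_3^{-1}\circ i_{\Psi'}$ for $\Psi'\in A_4(\Phi)$, and likewise on single roots; these commute with the structure maps of $\mathcal{G}_{1,4}$ by functoriality, so they glue to $h\colon G_4\to G_3$ satisfying $\iota_3 h=\iota_4$. From $\iota_4 f=\iota_3$ one gets $hf=\iota_3^{-1}\iota_4 f=\mathrm{id}_{G_3}$ at once. The remaining identity $fh=\mathrm{id}_{G_4}$ I would check on the generating subgroups $j_{\Psi'}(\St(\Psi', R, I))$: writing each factor through its generators $z^{\Psi'}_\alpha(s,\xi)$ one computes $h(j_{\Psi'}(z^{\Psi'}_\alpha(s,\xi)))=\iota_3^{-1}(z_\alpha(s,\xi))=j_\Lambda(z^\Lambda_\alpha(s,\xi))$ for any $\Lambda\in A_3(\Phi,\alpha)$, and then $f$ returns $j_{\Lambda'}(z^{\Lambda'}_\alpha(s,\xi))=j_{\Psi'}(z^{\Psi'}_\alpha(s,\xi))$, the two agreeing because $\Lambda'$ and $\Psi'$ both contain $\alpha$ and are glued along $\langle\alpha\rangle$. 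Thus $f$ and $h$ are mutually inverse, whence $\St(\Phi, R, I)\cong G_3\cong G_4$.

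The main obstacle, and the only genuinely type-specific point, is the well-definedness of $f$, i.e. the independence of $f_\Psi$ from the enclosing $\rA_4$ subsystem. This rests squarely on item~\ref{ExtProp} of Lemma~\ref{ext34} (which, as the remark after that lemma notes, fails in type $\rD_\ell$) together with the fact that all crossing-over between factors of the amalgam happens through the rank-one objects $\langle\alpha\rangle$, so that any two embeddings $\Psi\hookrightarrow\Psi'_0,\Psi'_1$ into $\rA_4$'s become identified in $G_4$. As in the proof of Lemma~\ref{absPres}, I would emphasize that this gluing uses only the generators $z_\alpha(s,\xi)$, which involve solely the root subgroups $X_{\pm\alpha}$, and is therefore insensitive to the precise definition adopted for the rank-one Steinberg group.
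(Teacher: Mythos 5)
Correct, and essentially the same approach as the paper: the paper disposes of this corollary in one line as a formal consequence of Theorem~\ref{relPres}, the first statement of Lemma~\ref{ext34}, and the universal property of colimits, which is exactly the content of your comparison of the two colimits. The only difference is that you make the formality explicit, constructing $f$ and $h$ and checking they are mutually inverse via the gluing of the generators $z_\alpha(s,\xi)$ along the rank-one objects, while the paper leaves this routine verification to the reader.
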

\begin{proof} This is a formal corollary of Theorem~\ref{relPres}, the first statement of Lemma~\ref{ext34} and the universal property of colimits. 
\end{proof}

Denote by $i_\Psi$ the map $\St(\Psi, R\ltimes tR_a[t])\rightarrow\St(\Phi, R\ltimes tR_a[t])$ induced by embedding $\Psi\hookrightarrow\Phi$ of root systems.
The next step is to show that the maps $i_\Psi\circ T_\Psi$ agree on $z_\gamma(s, \xi)$.
\begin{lemma}\label{glue_t} The value of $i_\Psi\circ T_\Psi$ on $z_\gamma(s, \xi)$, $s \in tR_a[t]$, $\xi\in R_a[t]$ does not depend on the choice of $\Psi\in A_4(\Phi, \gamma)$. \end{lemma}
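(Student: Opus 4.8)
The plan is to mirror the proof of Lemma~\ref{glue_c}: I compare $i_{\Psi_0}\circ T_{\Psi_0}$ and $i_{\Psi_1}\circ T_{\Psi_1}$ for $\Psi_0,\Psi_1\in A_4(\Phi,\gamma)$ and reduce the verification, via the combinatorics of Lemma~\ref{ext34}, to data supported on a shared $\rA_2$. First I would strip off the harmless part of the conjugation parameter. Splitting $\xi=\xi_0+\xi_+$ with $\xi_0\in R_a$ the constant term and $\xi_+\in tR_a[t]$, relation~\ref{Z1} of Lemma~\ref{Zrels} gives $z_\gamma(s,\xi)=z_\gamma(s,\xi_0)^{x_{-\gamma}(\xi_+)}$. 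Since $\xi_+$ has zero constant term it already lies in $R\ltimes tR_a[t]$, so $x_{-\gamma}(\xi_+)$ is the image under $\lambda_a\colon R\ltimes tR_a[t]\to R_a[t]$ of the like-named element. By Remark~\ref{excision} each $T_\Psi$ is the inverse of the excision isomorphism induced by this $\lambda_a$, hence is equivariant under conjugation by elements coming from $R\ltimes tR_a[t]$; applying $i_\Psi$ and noting that $x_{-\gamma}(\xi_+)$ is $\Psi$-independent in $\St(\Phi,R\ltimes tR_a[t])$, this reduces the claim to $\Psi$-independence of $i_\Psi\circ T_\Psi$ on $z_\gamma(s,\xi_0)$ with $s\in tR_a[t]$ and $\xi_0\in R_a$ constant.

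Next I would dispose of the genuinely integral case. Whenever the conjugation parameter itself lies in $R\ltimes tR_a[t]$ (for instance $\xi_0=0$, i.e.\ on $y_\gamma(s)$), the element $z_\gamma(s,\xi)$ is literally the image under $\lambda_a$ of the corresponding generator of $\St(\Psi,R\ltimes tR_a[t],tR_a[t])$, so $T_\Psi$ sends it back to that generator and $i_\Psi\circ T_\Psi(z_\gamma(s,\xi))=z_\gamma(s,\xi)$, now read inside the global relative group $\St(\Phi,R\ltimes tR_a[t],tR_a[t])$. By Corollary~\ref{relPresE} applied to the pair $(R\ltimes tR_a[t],tR_a[t])$ this image depends only on $\gamma$, $s$ and $\xi$, and not on $\Psi$. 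Hence the only configuration carrying real content is a constant $\xi_0\in R_a\setminus R$ with genuine denominators. For this remaining case I would invoke part~\ref{a2pc4} of Lemma~\ref{ext34} to assume that $\Psi_0\cap\Psi_1$ contains some $\Lambda\in A_2(\Phi,\gamma)$, the general situation being reached through an intermediate $\Psi\in A_4(\Phi,\gamma)$ sharing an $\rA_2$ with each of $\Psi_0,\Psi_1$, exactly as in the last paragraph of Lemma~\ref{glue_c}. Having a common $\Lambda$, I would rewrite $z_\gamma(s,\xi_0)$ modulo the relations of $\St(\Lambda,R_a[t],tR_a[t])$ using the second part of Lemma~\ref{titsLemma} together with Lemma~\ref{Zrels}, and compute the $T_\Psi$-image of each factor from the defining relation $i_\Psi\circ T_\Psi\circ\lambda_a^{*}=\theta^{*}\circ i_\Psi$ of Lemma~\ref{Tmap}, whose right-hand side is the globally defined map $\theta^{*}$ on $\St(\Phi,-)$ and therefore manifestly $\Psi$-independent.

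The hard part will be precisely the constant-term denominator $\xi_0$: the conjugating element $x_{-\gamma}(\xi_0)$ does \emph{not} descend from $R\ltimes tR_a[t]$, so the excision-equivariance of the first step fails to apply, and $z_\gamma(s,\xi_0)$ cannot be pulled back to an integral element along $\lambda_a^{*}$; even inside the shared $\rA_2$ it is an irreducible rank-one relative generator that no rewriting via Lemma~\ref{titsLemma} removes. Controlling this denominator is exactly the phenomenon that Tulenbaev's map of Lemma~\ref{Tmap} was built to handle, and the delicate technical point is to show that the value $T_\Psi(z_\gamma(s,\xi_0))$ is a function of the $\Lambda$-data attached to $\gamma$ alone, so that it matches across the two $\rA_4$'s containing $\Lambda$; once this is established everything else in the argument is formal.
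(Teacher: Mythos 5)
Your reduction steps are sound and essentially parallel the paper's own: the integral case (conjugating parameter in $\Img(\lambda_a)$) does follow from the commutativity of diagram~\ref{diagT} (the paper argues this way; your appeal to Remark~\ref{excision} also works, though that remark is itself only sketched in the paper), and part~\ref{a2pc4} of Lemma~\ref{ext34} reduces everything to two systems $\Psi_0,\Psi_1$ whose intersection contains an $\rA_2$ containing $\gamma$. But at that point your argument stops exactly where the lemma's actual content begins: you concede that for a conjugating parameter with genuine denominators you do not know how to show that $T_{\Psi_0}$ and $T_{\Psi_1}$ agree on $z_\gamma(s,\xi)$, and the tool you propose --- rewriting inside the shared $\rA_2$ via the second part of Lemma~\ref{titsLemma} --- cannot do the job, as you yourself note in your final paragraph. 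That rewriting changes the \emph{roots} appearing in the factors (which is what the proof of Lemma~\ref{glue_c} needed, since there the problem was the coincidence $\alpha=\beta$), but it does nothing to the denominators sitting in the conjugating slots. So the proposal has a genuine gap, and the case left open is, after your own first reduction, the whole case.

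The paper closes this gap with a specific algebraic identity that your proposal never reaches: it exploits the asymmetry between the two arguments of $z_\gamma(s,\xi)$. The first (ideal) slot may carry arbitrary elements of $tR_a[t]$, so denominators are harmless there; only the second slot needs to become integral. Choose $n$ with $a^n\xi\in\Img(\lambda_a)$, write $\gamma=\alpha+\beta$ with $\alpha,\beta$ in the shared $\rA_2$ and $N_{\beta,\alpha}=1$, and factor $s=ub$ with $b:=a^n$, $u:=a^{-n}s\in tR_a[t]$. Then
$$z_\gamma(ub,\xi)=\left[\,y_\beta(u)^{x_{-\gamma}(\xi)},\;y_\alpha(b)^{x_{-\gamma}(\xi)}\,\right],$$
and expanding this commutator by Lemma~\ref{Zrels} and Proposition~\ref{SwanP} produces a product of factors $y_{\gamma'}(s')$ and $z_{\gamma'}(s',\xi')$ in which every conjugating parameter $\xi'$ is either $0$, $-b\xi=-a^n\xi$, or $-b=-a^n$, hence lies in $\Img(\lambda_a)$: wherever $\xi$ lands in a second slot it arrives multiplied by $b$, and the leftover denominators ($u\xi$ and the like) are pushed into first slots, where they are allowed. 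Since all roots involved lie in the common $\rA_2$, this factorization holds in both $\St(\Psi_0,\cdot,\cdot)$ and $\St(\Psi_1,\cdot,\cdot)$; as the maps $i_{\Psi_i}\circ T_{\Psi_i}$ are homomorphisms agreeing on each factor by the already-settled cases, they agree on $z_\gamma(s,\xi)$. This is the step your proposal identifies as ``the delicate technical point'' but does not supply.
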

\begin{proof}
From the construction of Tulenbaev's map $T$ we get that $T(X_{e_i, s e_j})=X_{e_i, s e_j}$, for any $s\in tR_a[t]$, $1\leq i\neq j\leq 5$, see~\cite[Lemma~2.3]{T}.
This implies that $T$ preserves the elements $z_\gamma(s, 0) = y_\gamma(s)$.

Now let $\xi'$ be an element of $R[t]$.
From the commutativity of diagram~\ref{diagT} it follows that $T_\Psi(z_\gamma(s, \lambda_a(\xi'))) = z_\gamma(s, \theta(\xi'))$. 
This shows that the statement of the lemma holds for $\xi \in \Img(\lambda_a)$.

Now consider the case $\xi\not\in \Img(\lambda_a)$. Let $\Psi_0, \Psi_1 \in A_4(\Phi, \gamma)$ be two subsystems.
By the second part of Lemma~\ref{ext34} we may restrict ourselves to the case when $\Psi_0\cap\Psi_1$ contains a subsystem of type $\rA_2$ containing $\gamma$.
We can write $\gamma = \alpha+\beta$ for $\alpha,\beta \in\Psi_0\cap\Psi_1$ such that $N_{\beta,\alpha} = 1$.
We can also choose $n$ such that $a^n \xi \in \Img(\lambda_a)$. Set $b := a^n$, $u := a^{-n}s$.
Direct computation with relations from Proposition~\ref{SwanP} and Lemma~\ref{Zrels} shows that
\begin{multline}\nonumber z_{\gamma}(ub, \xi) = [y_{\beta}(u) ^ {x_{-\gamma}(\xi)}, y_{\alpha}(b) ^{x_{-\gamma}(\xi)}] = 
                 y_{-\alpha} (-u\xi)\cdot y_{\beta}(u)\cdot y_{\alpha}(ub^2\xi)\cdot y_{\gamma}(ub)\cdot \\
                 z_{\beta}(-u, -b\xi)\cdot y_{-\beta}(- ub^2\xi^2)\cdot y_{-\gamma}(-ub\xi^2)\cdot z_{-\alpha}(u\xi, -b). \end{multline}
Consequently, one can rewrite $z_\gamma(s, \xi)=z_\gamma(ub, \xi)$ as a product of $z_{\gamma'}(s', \lambda_a(\xi'))$.
By the first part of the argument, functions $T_{\Psi_0}$ and $T_{\Psi_1}$ coincide on each such factor, hence they are identical on $z_\gamma(s, \xi)$. \end{proof}

Let $A$ be a ring, $B$ an $A$ algebra and $b\in B$.
Denote by $\ev{A}{B}{b}\colon A[t]\rightarrow B$ the morphism of $A$-algebras evaluating each polynomial $p(t)\in A[t]$ at $b$, i.e. $\ev{A}{B}{b} (p(t)) = p(b)$.

\begin{lemma}[Dilation principle]\label{QScor} Let $h\in\St(\Phi, R[t], t R[t])$ be such that $\lambda_a^*(h) = 1$ in $\St(\Phi, R_a[t])$. 
Then for sufficiently large $n$ one has $\ev{R}{R[t]}{a^n\cdot t}^*(h) = 1.$ \end{lemma}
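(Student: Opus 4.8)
The plan is to upgrade the statement "$\lambda_a^*(h)=1$ in the localized group" into the effective statement "$\ev{R}{R[t]}{a^n t}^*(h)=1$ for large $n$" by routing $h$ through the Tulenbaev-type map $T_\Phi$ constructed by gluing the maps $i_\Psi\circ T_\Psi$ of Lemma~\ref{glue_t}. The key point is that Lemma~\ref{glue_t}, together with Corollary~\ref{relPresE} and the universal property of the colimit $\varinjlim_{\mathcal{G}_{1,4}}\St(-, R, I)$, assembles the local maps into a single homomorphism $T_\Phi$ fitting into a diagram analogous to~\ref{diagT}, namely $T_\Phi\circ\lambda_a^* = \theta^*$ on $\St(\Phi, R[t], tR[t])$. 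This is the relative-group, type-$\rE_\ell$ analogue of Tulenbaev's Lemma~\ref{Tmap}, and it is the technical heart of the dilation principle.

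First I would make the gluing explicit: the collection $\{i_\Psi\circ T_\Psi\}_{\Psi\in A_4(\Phi)}$ of morphisms $\St(\Psi, R_a[t], tR_a[t])\rightarrow\St(\Phi, R\ltimes tR_a[t])$ agrees on the generators $z_\gamma(s,\xi)$ shared between overlapping factors by Lemma~\ref{glue_t}. By Remark~\ref{Stronger} this agreement propagates to the images of the whole rank-one subgroups, so by the universal property of $G=\varinjlim_{\mathcal{G}_{1,4}}\St(-, R_a[t], tR_a[t])\cong\St(\Phi, R_a[t], tR_a[t])$ we obtain a well-defined map
\[
T_\Phi\colon \St(\Phi, R_a[t], tR_a[t])\rightarrow \St(\Phi, R\ltimes tR_a[t]).
\]
Since on each factor we have $T_\Psi\circ\lambda_a^*=\theta^*$ (the commutativity of~\ref{diagT} for $\Psi$), and since the generators $z_\gamma(s,\xi)$ generate everything, the glued map satisfies $T_\Phi\circ\lambda_a^* = \theta^*$ on all of $\St(\Phi, R[t], tR[t])$.

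Next I would run the standard dilation argument. By hypothesis $\lambda_a^*(h)=1$, so applying $T_\Phi$ gives $\theta^*(h)=T_\Phi(\lambda_a^*(h))=T_\Phi(1)=1$ in $\St(\Phi, R\ltimes tR_a[t])$. Now $h$ is a word in finitely many generators $z_\gamma(s,\xi)$, and $\theta$ localizes at $a$ only the coefficients of positive-degree terms; the element $h$ lives in $\St(\Phi, R\ltimes tR_a[t], tR_a[t])$, which by Remark~\ref{excision} (via Lemma~\ref{relSubgr}) sits inside $\St(\Phi, R\ltimes tR_a[t])$. The triviality of $\theta^*(h)$ as a finite word, combined with the fact that each defining relation involves only finitely many denominators $a^k$, lets one clear denominators: there is an $N$ so that substituting $t\mapsto a^N t$ sends $\theta^*(h)$ into the image of $\St(\Phi, R[t])$ and realizes the triviality already over $R[t]$. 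Concretely, $\ev{R}{R[t]}{a^n t}^*$ factors through $\theta^*$ for $n\geq N$, whence $\ev{R}{R[t]}{a^n t}^*(h)=1$.

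The main obstacle will be the bookkeeping in the last step: controlling uniformly how many powers of $a$ appear across all the finitely many generators and relations used to express $h$, so that a single $n$ works simultaneously. This is exactly where the relative structure matters — one must verify that the evaluation map $\ev{R}{R[t]}{a^n t}^*$ genuinely factors as $(\text{something})\circ\theta^*$ for large $n$, which rests on the interpretation of $R\ltimes tR_a[t]$ as polynomials whose positive-degree coefficients lie in $R_a$. I expect the group-theoretic gluing (producing $T_\Phi$) to be the conceptually decisive part, already handled by Lemma~\ref{glue_t} and Remark~\ref{Stronger}, while the denominator-clearing is routine once one knows $\theta^*(h)=1$.
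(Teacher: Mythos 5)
Your first two steps coincide with the paper's own proof: the gluing of the collection $\{i_\Psi\circ T_\Psi\}_{\Psi\in A_4(\Phi)}$ into a single homomorphism $T_\Phi$ via Lemma~\ref{glue_t}, Remark~\ref{Stronger}, Corollary~\ref{relPresE} and the universal property of the colimit, and then the deduction $\theta^*(h)=T_\Phi(\lambda_a^*(h))=T_\Phi(1)=1$. The gap is in your final step. The claim that ``$\ev{R}{R[t]}{a^n\cdot t}^*$ factors through $\theta^*$ for $n\geq N$'' is false, and the factorization you need actually goes in the opposite direction. At the ring level, no homomorphism $g\colon R\ltimes tR_a[t]\rightarrow R[t]$ with $g\circ\theta=\ev{R}{R[t]}{a^n\cdot t}$ can exist: it would have to send $a^{-k}t$ to a polynomial $p$ with $a^kp=a^nt$ for every $k$, including $k>n$ (already impossible for $R=\mathbb{Z}$, $a=2$). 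At the group level a factorization $g\circ\theta^*=\ev{R}{R[t]}{a^n\cdot t}^*$ with a \emph{fixed} $n$ would be even stronger than the lemma: it would produce a single $n$ that works simultaneously for every $h\in\Ker(\theta^*)$, whereas the whole point of the statement is that $n$ depends on $h$ (it must grow with the denominators occurring in a word representing $h$). So the ``bookkeeping'' you deferred is not routine under your formulation; it cannot be carried out.

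What makes your (correct) finitely-many-denominators intuition rigorous is the paper's directed-colimit argument, which replaces the factoring claim. One presents $R\ltimes tR_a[t]$ as the colimit of the directed system $A_i:=R[t]$ with transition maps $f_{ij}:=\ev{R}{R[t]}{a^{j-i}\cdot t}$; then $\theta$ is precisely the canonical map from $A_0$ to $\varinjlim_i A_i$, so the true factorization is $\theta=\iota_n\circ\ev{R}{R[t]}{a^n\cdot t}$, i.e. $\theta$ factors through the evaluation maps, not the other way round. Since the Steinberg group functor commutes with directed colimits (\cite[Lemma~2.2]{T}), one has $\St(\Phi, R\ltimes tR_a[t])\cong\varinjlim_i\St(\Phi, A_i)$, and an element of the initial stage of a directed colimit of groups becomes trivial in the colimit if and only if it becomes trivial at some finite stage. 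Applied to $\theta^*(h)=1$, this yields $\ev{R}{R[t]}{a^n\cdot t}^*(h)=1$ for some $n$, and hence for all larger $n$ (compose with further transition maps). With this colimit step substituted for your factoring claim, the rest of your argument is exactly the paper's proof.
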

\begin{proof} From the universal property of colimits and the previous lemma (cf. also Remark~\ref{Stronger}) it follows that there exists a unique $T_\Phi$ such that $T_\Phi \circ j_\Psi = i_\Psi \circ T_\Psi$ and $\theta^* = T_\Phi \circ \lambda_a^*$.
As before, $\theta$ denotes the map $R[t]\rightarrow R\ltimes tR_a[t]$ that localizes at $a$ coefficients of terms of degree $\geq 1$ (cf.~Lemma~\ref{Tmap}).

Let $(A_i, f_{ij})$ be the directed system of rings defined by
$$A_i:=R[t],\ f_{ij} := \ev{R}{R[t]}{a^{j-i} \cdot t},\ 0 \leq i\leq j.$$
It is easy to check that $\varinjlim A_i$ coincides with $R\ltimes tR_a[t]$.
The morphisms $A_j\rightarrow \varinjlim_i A_i \cong R\ltimes tR_a[t]$ are exactly $ \ev{R}{R\ltimes tR_a[t]}{a^{-j} \cdot t}$.
Steinberg group functor commutes with colimits over directed systems (cf.~\cite[Lemma~2.2]{T}), therefore
$$\St(\Phi, R\ltimes tR_a[t]) = \St(\Phi, \varinjlim_i A_i) \cong \varinjlim_i \St(\Phi,A_i).$$
By hypothesis, $\theta^*(h)=T_\Phi(1)=1$ and it remains to use the definition of colimit. \end{proof}

\subsection{Quillen---Suslin local-global principle and centrality of $\K_2$}
\begin{lemma}\label{T23}
 Let $\Phi=\rE_\ell$, $\ell=6,7,8$.
 Let $a$, $b$ be elements of $R$ which span $R$ as an ideal and let $\alpha\in\St(\Phi, R[t], t R[t])$ be such that $\lambda_a^*(\alpha)=1$, $\lambda_b^*(\alpha)=1$.
 Then $\alpha=1$.
\end{lemma}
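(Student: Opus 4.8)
The plan is to reduce the global vanishing of $\alpha$ to the local vanishing hypotheses via the dilation principle (Lemma~\ref{QScor}), using the comaximality $Ra + Rb = R$ to interpolate between the two localizations. First I would invoke Lemma~\ref{QScor} twice: since $\lambda_a^*(\alpha) = 1$ and $\lambda_b^*(\alpha) = 1$, there exists an integer $n$ large enough that both $\ev{R}{R[t]}{a^n t}^*(\alpha) = 1$ and $\ev{R}{R[t]}{b^n t}^*(\alpha) = 1$. The key idea is then to substitute $t \mapsto a^n t$ and $t \mapsto b^n t$ and exploit the fact that the powers $a^n$ and $b^n$ still generate the unit ideal, so one can write $c a^n + d b^n = 1$ for suitable $c, d \in R$.

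The heart of the argument will be a ``homotopy'' in the variable $t$ that connects the evaluation at $a^n t$ to the evaluation at $b^n t$ through the evaluation at $t$ itself. Concretely, I would introduce an auxiliary variable and consider the two-variable element $\alpha(t) \in \St(\Phi, R[t], tR[t])$, then build an element of $\St(\Phi, R[t,u], \ldots)$ interpolating between the substitution homomorphisms. Using $c a^n + db^n = 1$ one arranges that evaluating the interpolating element at the endpoints recovers $\ev{R}{R[t]}{a^n t}^*(\alpha)$ and $\ev{R}{R[t]}{b^n t}^*(\alpha)$ (both trivial), while evaluating at an intermediate value recovers $\alpha$ itself. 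The standard Quillen--Suslin trick is that the difference of two polynomial substitutions differing by a comaximal shift can be realized by conjugation/multiplication by relative elements whose coefficients are controlled, and the relative structure of $\St(\Phi, R[t], tR[t])$ (whose generators are the $z_\gamma(s,\xi)$ with $s \in tR[t]$) is exactly what permits this manipulation.

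The main obstacle I expect is making the interpolation argument rigorous within the relative Steinberg group rather than the elementary group: one must verify that the substitution maps and the splicing of the two triviality statements respect the relative structure and do not require any relations beyond those available from Proposition~\ref{SwanP} and Lemma~\ref{Zrels}. In particular, the passage from $\ev{R}{R[t]}{a^n t}$ and $\ev{R}{R[t]}{b^n t}$ back to the identity substitution needs the addition formula for the parameter $t$ together with comaximality, and one has to check that the relevant elements genuinely lie in the congruence subgroup $\St(\Phi, R[t], tR[t])$ so that Lemma~\ref{QScor} applies at each stage. Once the interpolation is set up, the conclusion $\alpha = 1$ follows by evaluating the homotopy at the two endpoints where triviality is already known and reading off triviality at the generic point; the bookkeeping of which powers $a^n, b^n$ suffice and choosing a single $n$ that works for both localizations simultaneously is the routine but essential final step.
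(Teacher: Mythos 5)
Your toolbox is the right one (the dilation principle, comaximality of $a^n$ and $b^n$, an auxiliary-variable ``difference of substitutions''), but the argument as described has a genuine logical gap at its center. Applying Lemma~\ref{QScor} directly to $\alpha$ does give $\alpha(a^nt)=1$ and $\alpha(b^nt)=1$, but this is strictly weaker than what is needed: substitution $c\mapsto \alpha(ct)$ is not additive in $c$, so the set $\left\{c\in R \mid \alpha(ct)=1\right\}$ is not known to be an ideal, and comaximality $ca^n+db^n=1$ gives you no formal way to reach $\alpha(t)$. (Indeed, the ideal property of the Quillen set is exactly what the paper deduces \emph{from} this lemma in the proof of Theorem~\ref{QSlemma}; you cannot presuppose anything of that kind here.) The endgame you propose --- ``evaluating the homotopy at the two endpoints where triviality is already known and reading off triviality at the generic point'' --- is a non sequitur: a polynomial family of group elements can be trivial at two specializations without being trivial anywhere else, and nothing in your sketch forces the interpolating element itself to be trivial.

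The missing idea, which is the actual content of the paper's proof, is that the dilation principle must be applied \emph{to the difference element over an enlarged base ring}, not to $\alpha$. Set $S:=R[t,t_1]$ and $\beta(t,t_1,t_2):=\alpha(t_1t)\cdot\alpha^{-1}((t_1+t_2)t)\in\St(\Phi,S[t_2])$; it vanishes at $t_2=0$, hence lies in $\St(\Phi,S[t_2],t_2S[t_2])$ by Lemma~\ref{relSubgr}, and its localizations at $a$ and at $b$ are trivial because those of $\alpha$ are. Lemma~\ref{QScor}, applied over the base ring $S$ in the variable $t_2$, then yields $n$ with $\beta(t,t_1,a^nt_2)=1$ and $\beta(t,t_1,b^nt_2)=1$ identically, i.e.\ $\alpha(t_1t)=\alpha((t_1+a^nt_2)t)$ with a \emph{free} base point $t_1$ (and likewise for $b$). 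This shift-invariance is the strictly stronger statement your version lacks, and unlike pointwise triviality it composes: shifts by multiples of $a^n$ and of $b^n$ combine to shifts by $ra^n+sb^n=1$. The conclusion is the telescoping specialization $1=\beta(t,1,-sb^n)\cdot\beta(t,ra^n,-ra^n)=\alpha(t)\cdot\alpha^{-1}(ra^nt)\cdot\alpha(ra^nt)\cdot\alpha^{-1}(0)=\alpha(t)$. To repair your write-up you would need to construct this $\beta$, verify it lies in the relative group over $R[t,t_1]$, and apply the dilation principle to it; your first step (dilation applied to $\alpha$ itself) can then simply be discarded.
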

\begin{proof}
 We reproduce the argument of \cite[Lemma~2.5]{T}. 
 Set $S := R[t, t_1]$.
 Consider the following element of $\St(\Phi, S[t_2])$:
 $$\beta(t, t_1, t_2) := \alpha(t_1t) \cdot  \alpha^{-1}((t_1+t_2) t) = \ev{R}{S[t_2]}{t_1t}^*(\alpha) \cdot \ev{R}{S[t_2]}{(t_1 + t_2)t}^*(\alpha^{-1}).$$
 It is easy to see that $\beta(t, t_1, t_2)$ belongs to $\Ker(\eval{t_2}{S}{S}{0}^*\colon\St(\Phi, S[t_2]) \rightarrow \St(\Phi, S))$ and hence by Lemma~\ref{relSubgr} lies in $\St(\Phi, S[t_2], t_2 S[t_2])$. 
 On the other hand, \begin{multline}\nonumber 
 \lambda_{a}^*(\beta(t, t_1, t_2)) = \left(\lambda_a\circ \ev{R}{S[t_2]}{t_1t}\right)^*(\alpha) \cdot \left(\lambda_a\circ \ev{R}{S[t_2]}{(t_1 + t_2)t}\right)^*(\alpha^{-1}) = \\
  = \ev{R_a}{S_a[t_2]}{t_1t}^*(\lambda_{a}^*(\alpha)) \cdot \ev{R_a}{S_a[t_2]}{(t_1 + t_2)t}^*(\lambda_{a}^*(\alpha^{-1})) = 1 \end{multline}
 Similarly, $\lambda_{b}^*(\beta(t, t_1, t_2)) = 1$.
 In view of Lemma~\ref{QScor}, there exists $n$ such that $\beta(t, t_1, a^n t_2) = 1$, $\beta (t, t_1, b^n t_2) = 1$.
 By assumption, there exist $r, s\ \in R$ such that $r a^n + s b^n = 1$.
 Finally, $$1 = \beta(t, 1, -sb^n)\beta(t, ra^n, -ra^n) = \alpha(t)\cdot \alpha^{-1}(ra^n\cdot t) \cdot \alpha(ra^n\cdot t) \cdot \alpha^{-1}(0) = \alpha(t). \qedhere$$ \end{proof}  
  
\begin{proof}[Proof of Theorem~\ref{QSlemma}]
 It suffices to show ``if'' part of the statement. 
 Consider the set $Q(\alpha)$ consisting of elements $a\in R$ such that $\lambda_a^*(\alpha)$ is trivial.
 In the literature the set $Q(\alpha)$ is usually called the {\it Quillen set} of $\alpha$.
 It is also assumed that $Q(\alpha)$ contains the ideal of nilpotent elements of $R$.
 
 Let us show that $Q(\alpha)$ is an ideal of $R$.
 Indeed, let $c$ be a nonnilpotent element lying in the ideal $\langle a, b\rangle$ spanned by $a,b \in Q(\alpha)$.
 We want to show that $c\in Q(\alpha)$.
 Passing to the ring $R_c$ we see that $\lambda_c(a)$ and $\lambda_c(b)$ together generate $R_c$ as an ideal.
 Without loss of generality we may assume that both $\lambda_c(a)$ and $\lambda_c(b)$ are nonnilpotent.
 
 Clearly, $\lambda^*_{\lambda_c(a)}(\lambda_c^*(\alpha)) = \lambda^*_{\lambda_a(c)}(\lambda_a^*(\alpha)) = 1$ and 
 $\lambda^*_{\lambda_c(b)}(\lambda_c^*(\alpha)) = \lambda^*_{\lambda_b(c)}(\lambda_b^*(\alpha)) = 1$.
 Applying Lemma~\ref{T23} to $\lambda_c^*(\alpha)\in\St(\Phi, R_c, tR_c[t])$ we obtain that $\lambda_c^*(\alpha) = 1$ hence $c\in Q(\alpha)$. 
 
 If $Q(\alpha)$ is a proper ideal, then it is contained in some maximal ideal $M$ and there exists $s\in R \setminus M$
 such that $\lambda_s^*(\alpha)=1$ contrary to the definition of $Q(\alpha)$ and $M$.
 This shows that $Q(\alpha)=R$ and $\alpha=1$.
\end{proof}

\begin{lemma}\label{stability} Let $\Phi$ be a simply laced root system of rank $\geq 2$.
Then there exist a vertex $\alpha_1$ on $D(\Phi)$ such that for every local ring $R$ the map $\K_2(\langle \{ \alpha_1 \} \rangle, R) \rightarrow \K_2(\Phi, R)$ is surjective. \end{lemma}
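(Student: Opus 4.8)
The plan is to combine the classical description of $\K_2(\Phi,R)$ over a local ring as a group generated by Steinberg symbols with the fact that in a simply laced irreducible system all roots are Weyl-conjugate, so that every symbol can be pushed into the chosen $\rA_1$-subsystem $\langle\{\alpha_1\}\rangle$. For $\beta\in\Phi$ and $u\in R^*$ write $h_\beta(u):=w_\beta(u)w_\beta(1)^{-1}$ and set $c_\beta(u,v):=h_\beta(u)h_\beta(v)h_\beta(uv)^{-1}$. The image of $c_\beta(u,v)$ under $\phi$ is trivial, so $c_\beta(u,v)\in\K_2(\Phi,R)$; since all three factors lie in the subgroup $\St(\langle\{\beta\}\rangle,R)$, the symbol $c_\beta(u,v)$ is in fact the image of an element of $\K_2(\langle\{\beta\}\rangle,R)$ under the functorial map of Lemma~\ref{funct}.

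First I would invoke the structure theory of $\St(\Phi,R)$ over a local ring. Using the Gauss decomposition available in this case together with Stein's centrality theorem \cite[Theorem~2.13]{St73}, one shows that $\K_2(\Phi,R)$ is central and generated by the symbols $c_\beta(u,v)$, $u,v\in R^*$, $\beta\in\Phi$. This is the only step that genuinely uses the hypothesis that $R$ is local.

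Next I would reduce all these symbols to the single root $\alpha_1$. Fix a vertex $\alpha_1$ of $D(\Phi)$; since $\Phi$ is irreducible and simply laced, $W(\Phi)$ acts transitively on $\Phi$, so each $\beta\in\Phi$ equals $w(\alpha_1)$ for some $w\in W(\Phi)$, which I lift to an element $g\in\St(\Phi,R)$ built from the $w_\gamma(1)$. Conjugating by $g$ and writing $H(u):=g\,h_{\alpha_1}(u)\,g^{-1}=h_\beta(u)z(u)$ with $z(u):=h_\beta(u)^{-1}H(u)\in\K_2(\Phi,R)$ (both factors have the same $\phi$-image, computed from~\ref{RW}), centrality of $\K_2(\Phi,R)$ gives $H(u)H(v)H(uv)^{-1}=g\,c_{\alpha_1}(u,v)\,g^{-1}=c_{\alpha_1}(u,v)$ and hence
\[ c_{\alpha_1}(u,v)=c_\beta(u,v)\cdot z(u)z(v)z(uv)^{-1}. \]
Together with the normalization $h_\gamma(1)=1$ and the explicit signs $\eta_{\gamma,\beta}$, $N_{\alpha\beta}$ entering~\ref{RW}, this identity is used to show that $c_\beta(u,v)$ lies in the subgroup generated by the $\alpha_1$-symbols (for a simply laced system the correction $z(u)z(v)z(uv)^{-1}$ collapses, giving the cleaner $c_\beta(u,v)=c_{\alpha_1}(u,v)$).

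Finally, every generator $c_\beta(u,v)$ of $\K_2(\Phi,R)$ is thereby realized as the image of an $\alpha_1$-symbol $c_{\alpha_1}(u,v)\in\K_2(\langle\{\alpha_1\}\rangle,R)$, so the functorial map $\K_2(\langle\{\alpha_1\}\rangle,R)\rightarrow\K_2(\Phi,R)$ is surjective. The main obstacle is the bookkeeping in the previous paragraph: controlling the correction term $z(u)z(v)z(uv)^{-1}$ under Weyl conjugation, i.e. verifying that the Steinberg symbol is invariant (up to $\alpha_1$-symbols) along a $W$-orbit. This is exactly where the simply laced hypothesis and the precise sign conventions are needed, and it is the part requiring care; the generation statement of the first step, although classical, must likewise be applied in precisely the local setting assumed here.
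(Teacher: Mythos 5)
Your route is genuinely different from the paper's, and as written it has a gap at its foundation. The paper disposes of this lemma in one line, by citing Stein's surjective stability theorems \cite[Corollary~3.2, Theorem~4.1]{St78}: over a ring of stable rank one (in particular any local ring) each map in a chain $\K_2(\langle\{\alpha_1\}\rangle,R)\rightarrow\K_2(\rA_2,R)\rightarrow\dots\rightarrow\K_2(\Phi,R)$, obtained by adjoining one vertex of $D(\Phi)$ at a time, is surjective; the vertex $\alpha_1$ is chosen so that such a chain exists, and the lemma follows by composition. Note that this argument needs no information whatsoever about what $\K_2(\Phi,R)$ or $\K_2(\langle\{\alpha_1\}\rangle,R)$ is generated by. You instead route everything through the claim that, over a local ring, $\K_2(\Phi,R)$ is generated by the Steinberg symbols $c_\beta(u,v)$, $\beta\in\Phi$, $u,v\in R^*$, and then push all symbols into the fixed $\rA_1$ by Weyl conjugation. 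The second half of your plan is fine: independence of $c_\beta(u,v)$ of the root $\beta$ (up to the sign bookkeeping you describe) is a classical computation, available here because all roots of a simply laced irreducible system are $W(\Phi)$-conjugate and $\K_2(\Phi,R)$ is central over a local ring by \cite[Theorem~2.13]{St73}.

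The gap is your first step, which carries the entire weight of the proof. ``Using the Gauss decomposition together with Stein's centrality theorem, one shows that $\K_2(\Phi,R)$ is generated by the symbols'' is not a proof: extracting symbol generation from a Gauss/Bruhat decomposition is precisely the hard core of Matsumoto--Stein theory (for fields it is Matsumoto's theorem; for semilocal rings it is the substance of \cite{St73}, where it is established together with, not prior to, the stability statements). It is also exactly the kind of claim that needs a precise reference rather than a gesture: for local rings with very small residue fields ($\mathbb{F}_2$, $\mathbb{F}_3$) the ``many units'' hypotheses underlying van der Kallen--type presentations fail, and in related relative settings one genuinely needs Dennis--Stein symbols rather than Steinberg symbols, so nothing about this step is formal. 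Consequently you face a dichotomy: either you import Stein's theorem as a citation---but then your Weyl-conjugation detour is pointless, since the stability theorem cited by the paper already yields the lemma directly---or you must actually supply the Matsumoto--Stein argument, which is a paper's worth of work, not a sentence. As it stands, the proposal replaces the result to be cited by a strictly stronger uncited one.
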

\begin{proof} The result immediately follows from the surjective stability theorem for $\K_2$ (see \cite[Corollary~3.2]{St78}, \cite[Theorem~4.1]{St78}). \end{proof}

\begin{proof}[Proof of Theorem~\ref{centrality}]
 Let $\alpha$ be a simple root and $R$ be arbitrary commutative ring.
 Consider the following subgroups of $\St(\Phi, R)$:
 $$U^+_{\alpha}(R) := \langle X_\beta(R) \mid \beta\in \Sigma_\alpha\rangle,\ U^-_{\alpha}(R) := \langle X_\beta(R) \mid \beta\in -\Sigma_\alpha\rangle.$$
 Using commutator identities~\ref{Rcf21}--\ref{Rcf22} it is easy to show that $U^\pm_{\alpha}(R)$ are normalized by $\Img(\St(\Delta_\alpha, R)\rightarrow \St(\Phi, R))$.
 Recall also that the restriction of the canonical map $\phi$ from~\ref{basicExSeq} to either of $U^\pm_{\alpha}(R)$ is an isomorphism.
 Together with the previous statement this implies that $U^\pm_{\alpha}(R)$ are centralized by $\Img(\K_2(\Delta_\alpha, R)\rightarrow \St(\Phi, R))$.

 Now let $\alpha_1$ be the simple root from the statement of the previous lemma.
 Take any other simple root $\alpha\neq \alpha_1$ (so that $\alpha_1 \in \Delta_\alpha$).
 The subgroups $U_\alpha^\pm(R)$ together generate $\St(\Phi, R)$ as an abstract group (cf.~\cite[Lemma~2.1]{S}),
 therefore it suffices to show that $h := y_\beta(t)^g \cdot y_{\beta}(-t)\in\St(\Phi, R[t], tR[t])$ is trivial for $\beta\in\pm \Sigma_\alpha$, $g\in \Img(\K_2(\Phi, R)\rightarrow\K_2(\Phi, R[t]))$
 (we can specialize $t$ to any element of $R$).
 
 By Theorem~\ref{QSlemma} we are left to prove that $\lambda_M^*(h) = 1$ for every maximal ideal $M$ of $R$. 
 But by the previous lemma $\lambda_M^*(g)$ lies in the image of $\K_2(\Delta_\alpha, R_M)\rightarrow \K_2(\Phi, R_M[t])$
 and hence by the first part of the proof centralizes $U_\alpha^\pm(R_M[t])$.
\end{proof}

\begin{proof}[Proof of Corollary~\ref{topoCorr}]
 Statement~\ref{UCE} is a consequence of the fact that under the assumptions of Theorem~\ref{centrality} the group $\St(\Phi, R)$ is superperfect,
 i.e. its first two homology groups are trivial (see~\cite[Theorem~5.3]{St1}).   
 
 To see that $\St(\Phi, R, I)$ is a relative central extension of $\pi^*\colon \St(\Phi, R)\rightarrow \St(\Phi, R/I)$ recall from~\ref{relativeSteinbergDef}
 that $$G_1 \cap G_2 \subseteq \K_2(\Phi, D(R, I)) \subseteq \Cent(\St(\Phi, D(R, I))),$$ which implies that the induced action of $\St(\Phi, R/I)$ on $(G_1\cap G_2)/C$ is trivial.
 To see that $\St(\Phi, R, I)$ is a {\it universal} relative central extension use the sufficient condition of~\cite[Proposition~6]{Lo} (compare also with the proof of~\cite[Proposition~8]{Lo}).
 
 \end{proof} 
 
\begin{proof}[Proof of Corollary~\ref{topoCorr2}] 
 Statement~\ref{RelSeq} can be obtained by repeating the proof of~\cite[Theorem~4]{Lo}.
 It is obvious that $\pi_1(X(R))=\K_1(\Phi, R)$. 
 Repeat the argument of~\cite[Corollary~3]{La} to show that $\pi_2(X(R))=\K_2(\Phi, R)$.
 Finally, Gersten formula~\ref{Gersten} can be obtained by repeating the proof of the main theorem of~\cite{Ge}. \end{proof}

\printbibliography

\end{document}